\theoremstyle{definition}
\newtheorem{Def}{Definition}[section]
\newtheorem{es}[Def]{Example}
\newtheorem{ese}[Def]{Examples}
\theoremstyle{remark}
\newtheorem{obs}[Def]{Remark}
\theoremstyle{plain}
\newtheorem{prop}[Def]{Proposition}
\newtheorem{lema}[Def]{Lemma}
\newtheorem{cor}[Def]{Corollary}
\newtheorem{teo}[Def]{Theorem}
\newcommand{\fqv}{finitary quasivariety}
\newcommand{\lfp}{locally finitely presentable}
\newcommand{\bo}{\mathbf}
\newcommand{\A}{{\mathcal A}}
\newcommand{\B}{{\mathcal B}}
\newcommand{\C}{{\mathcal C}}
\newcommand{\D}{{\mathcal D}}
\newcommand{\E}{{\mathcal E}}
\newcommand{\G}{{\mathcal G}}
\newcommand{\K}{{\mathcal K}}
\renewcommand{\L}{{\mathcal L}}
\newcommand{\M}{{\mathcal M}}
\newcommand{\N}{{\mathcal N}}
\renewcommand{\P}{{\mathcal P}}
\newcommand{\R}{{\mathcal R}}
\newcommand{\V}{{\mathcal V}}
\title{Enriched Regular Theories}
\author{Stephen Lack and Giacomo Tendas}
\address{Department of Mathematics and Statistics, Macquarie University NSW 2109, 
	Australia}
\email{steve.lack@mq.edu.au}
\address{Department of Mathematics and Statistics, Macquarie University NSW 2109, 
	Australia}
\email{giacomo.tendas@mq.edu.au}
\date{\today}
\thanks{The first-named author acknowledges with gratitude the support of the Australian Research Council Grant DP190102432. The second-named author gratefully acknowledges the support of an International Macquarie University Research Excellence Scholarship.}
\begin{document}
	
\begin{abstract}
	Regular and exact categories were first introduced by Michael Barr in 1971; since then, the theory has developed and found many applications in algebra, geometry, and logic. In particular, a small regular category determines a certain theory, in the sense of logic, whose models are the regular functors into Set. Barr further showed that each small and regular category can be embedded in a particular category of presheaves; then in 1990 Makkai gave a simple explicit characterization of the essential image of the embedding, in the case where the original regular category is moreover exact. More recently Prest and Rajani, in the additive context, and Kuber and Rosick\'y, in the ordinary one, described a duality which connects an exact category with its (definable) category of models. Working over a suitable base for enrichment, we define an enriched notion of regularity and exactness, and prove a corresponding version of the theorems of Barr, of Makkai, and of Prest-Rajani/Kuber-Rosick\'y.
\end{abstract}	
	
\maketitle
	
\tableofcontents

\section{Introduction}

When talking about {\em theories} we may think of two different approaches: a logical one and a categorical one. From the logical point of view, a theory is given by a list of axioms on a fixed set of operations, and its models are corresponding sets and functions that satisfy those axioms. For instance {\em algebraic theories} are those whose axioms consist of equations based on the operation symbols of the language (e.g. the axioms for abelian groups or rings).
More generally, if the axioms are still equations but the operation symbols are not defined globally, but only on equationally defined subsets, we talk of {\em essentially algebraic theories}.

\begin{es}
	Sets with a binary relation can be seen as the models of the essentially algebraic theory with two global operations $s,t$ $:\textnormal{edge}\to \textnormal{vertex}$ (source and target), a partial operation $\sigma$ $:\textnormal{edge}\times \textnormal{edge}\to \textnormal{edge}$ such that $\sigma(x,y)$ is defined if and only if $s(x)=s(y)$ and $t(x)=t(y)$.
	The axioms of the theory are then: $\sigma(x,y)=x$, $\sigma(x,y)=y$.
\end{es}

A further step can be made by considering {\em regular theories}, in which we allow existential quantification over the usual equations.

\begin{es}
	Von Neumann regular rings are the models of the regular theory with axioms those of rings plus the following one:
	$\forall x\ \exists y\ x=xyx$.
\end{es}

Categorically speaking, we could think of a theory as a category $\C$ with some structure, and of a model of $\C$ as a functor $F:\C\to\bo{Set}$ which preserves that structure; this approach was first introduced by Lawvere in \cite{Law63:articolo}. Algebraic theories then correspond to categories with finite products, and models are finite product preserving functors. On the other hand, a category with finite limits represents an essentially algebraic theory, and functors preserving finite limits are its models \cite{Fre72:articolo}. Regular theories \cite{MR77:libro} correspond instead to {\em regular categories}: finitely complete ones with coequalizers of kernel pairs, for which regular epimorphisms are pullback stable. Models here are functors preserving finite limits and regular epimorphisms; we refer to them as regular functors.

These two notions, categorical and logical, can be recovered from each other: given a logical theory, there is a syntactic way to build a category with the relevant structure for which models of the theory correspond to functors to $\bo{Set}$ preserving this structure, and vice versa. 

For essentially algebraic theories there is a duality between theories and their models:

\begin{teo}[Gabriel-Ulmer, \cite{GU71:libro}]
	The following is a biequivalence of 2-categories:
	\begin{center}
		
		\begin{tikzpicture}[baseline=(current  bounding  box.south), scale=2]

		\node (f) at (0,0.4) {$\textnormal{LFP}(-,\bo{Set}):\bo{LFP}$};
		\node (g) at (2.2,0.4) {$\bo{Lex}^{op}:\textnormal{Lex}(-,\bo{Set}),$};
		
		\path[font=\scriptsize]

		([yshift=1.3pt]f.east) edge [->] node [above] {} ([yshift=1.3pt]g.west)
		([yshift=-1.3pt]f.east) edge [<-] node [below] {} ([yshift=-1.3pt]g.west);
		\end{tikzpicture}

	\end{center}
	where $\bo{LFP}$ is the 2-category of \lfp\ categories, finitary right adjoints, and natural transformations; and $\bo{Lex}$ is the 2-category of small categories with finite limits, finite-limit-preserving functors, and natural transformations.
\end{teo}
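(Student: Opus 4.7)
The plan is to exhibit $\textnormal{LFP}(-,\bo{Set})$ and $\textnormal{Lex}(-,\bo{Set})$ as quasi-inverse pseudofunctors, verifying the biequivalence object by object, morphism by morphism, and 2-cell by 2-cell. At the object level, for an LFP category $\K$ a finitary right adjoint $F:\K\to\bo{Set}$ has a left adjoint $L$, so $F\cong\K(L1,-)$, and finitariness of $F$ forces $L1$ to be finitely presentable; this identifies $\textnormal{LFP}(\K,\bo{Set})\simeq\K_{fp}^{op}$, which is essentially small and has finite limits since $\K_{fp}$ is closed under finite colimits in $\K$. Dually, for a small lex $\C$ the category $\textnormal{Lex}(\C,\bo{Set})$ is LFP: it sits reflectively inside $[\C,\bo{Set}]$, is closed there under filtered colimits (finite limits commute with filtered colimits in $\bo{Set}$), and the representables $\C(c,-)$ form a strong generator of finitely presentable objects.

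One direction of the equivalence is the evaluation $E_\K:\K\to\textnormal{Lex}(\K_{fp}^{op},\bo{Set})$ given by $k\mapsto\K(-,k)|_{\K_{fp}^{op}}$; its essential surjectivity is the recognition theorem for LFP categories, namely that every lex functor $\K_{fp}^{op}\to\bo{Set}$ is the filtered colimit of representables indexed by its category of elements, and that this filtered colimit, computed in $\K$, produces the required preimage. The other direction is the Yoneda map $Y_\C:\C\to\textnormal{Lex}(\C,\bo{Set})_{fp}^{op}$, $c\mapsto\C(c,-)$, which is fully faithful by Yoneda.

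The main technical obstacle is showing that $Y_\C$ is essentially surjective, i.e.\ that every finitely presentable lex functor $F:\C\to\bo{Set}$ is representable. The representables $\C(c,-)$ are themselves finitely presentable because filtered colimits in $\textnormal{Lex}(\C,\bo{Set})$ are pointwise, so the claim reduces to the usual Yoneda calculation. Conversely, any $F$ is the filtered colimit of representables indexed by its category of elements; finite presentability of $F$ then lets its identity factor through some $\C(c,-)$, exhibiting $F$ as a retract of a representable, and since $\C$ has finite limits it splits idempotents, so $F$ is itself representable.

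Finally, 2-functoriality is direct: a finitary right adjoint $G:\K\to\L$ has a left adjoint preserving finitely presentable objects, yielding by restriction a lex functor $\L_{fp}^{op}\to\K_{fp}^{op}$; dually, precomposition with a lex functor $\C\to\D$ is a finitary right adjoint $\textnormal{Lex}(\D,\bo{Set})\to\textnormal{Lex}(\C,\bo{Set})$, its left adjoint being left Kan extension. The 2-cell level is handled by naturality of these correspondences, and the remaining pseudonatural coherence checks are routine.
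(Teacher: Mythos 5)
The paper does not prove this statement: it is quoted as classical background and attributed to Gabriel--Ulmer \cite{GU71:libro}, so there is no in-paper argument to compare yours against. Your proposal is the standard proof of the duality and is correct: the identification $\textnormal{LFP}(\K,\bo{Set})\simeq\K_{fp}^{op}$ via representability of finitary right adjoints, the recognition of $\textnormal{Lex}(\C,\bo{Set})$ as LFP, the retract-of-a-representable argument (using that a finitely complete $\C$ splits idempotents) for essential surjectivity of the Yoneda comparison, and the left-Kan-extension description of the 1-cell correspondence are all as in the usual treatments. The only substantive step you leave implicit is full faithfulness of the evaluation $E_\K:\K\to\textnormal{Lex}(\K_{fp}^{op},\bo{Set})$, which requires the density of $\K_{fp}$ in $\K$ (not merely that it generates under filtered colimits); this is standard but should be stated alongside the essential surjectivity you do address.
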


There is a corresponding duality in the context of regular theories; to describe it let us recall the most important results involving regular categories. First of all, Barr proved in \cite{Bar86:articolo} that every small regular category can be regularly embedded in the functor category based on its models:

\begin{teo}[Barr's Embedding]
	Let $\C$ be a small regular category; then the evaluation functor $\textnormal{ev}:\C\to[\textnormal{Reg}(\C,\bo{Set}),\bo{Set}]$ is fully faithful and regular.
\end{teo}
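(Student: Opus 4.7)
The plan is to verify regularity and full faithfulness of $\textnormal{ev}$ separately; the first is essentially formal, while the second carries the content of the theorem.

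For regularity, I would exploit the fact that finite limits and regular epimorphisms in $[\textnormal{Reg}(\C,\bo{Set}),\bo{Set}]$ are computed pointwise: a cone over a finite diagram is a limit exactly when its image under each evaluation at $F\in\textnormal{Reg}(\C,\bo{Set})$ is a limit in $\bo{Set}$, and similarly a map is a regular epimorphism iff each of its components is a surjection. Since the composite of $\textnormal{ev}$ with evaluation at $F$ is simply $F$ itself---which preserves finite limits and regular epimorphisms by definition---$\textnormal{ev}$ inherits the same preservation properties.

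The substantive point is full faithfulness, which amounts to showing that $\textnormal{Reg}(\C,\bo{Set})$ is a sufficiently rich collection of models: jointly faithful on parallel pairs (for faithfulness of $\textnormal{ev}$) and able to detect arbitrary natural transformations between representables (for fullness). The standard route is sheaf-theoretic. Equip $\C$ with the regular coverage $J$, whose covers are single regular epimorphisms, and form the topos $\textnormal{Sh}(\C,J)$. The Yoneda composite $y:\C\to\textnormal{Sh}(\C,J)$ is fully faithful (representables are sheaves since $J$ is subcanonical on a regular $\C$), preserves finite limits, and sends regular epimorphisms to effective ones. Since $\textnormal{Sh}(\C,J)$ is a coherent topos, Deligne's theorem provides enough points, and points of $\textnormal{Sh}(\C,J)$ correspond by restriction along $y$ precisely to regular functors $\C\to\bo{Set}$. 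Consequently this family is jointly conservative.

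The main obstacle is therefore this ``enough points'' step, which depends on nontrivial input---either Deligne's theorem, or equivalently a completeness theorem for regular logic. Once this is available, full faithfulness of $\textnormal{ev}$ follows by transporting any natural transformation $\alpha:\textnormal{ev}(A)\to\textnormal{ev}(B)$ along the embedding into $\textnormal{Sh}(\C,J)$: conservativity of the point-evaluation family identifies $\alpha$ with a morphism $yA\to yB$ in the topos, and full faithfulness of $y$ then produces the unique $h:A\to B$ in $\C$ with $\textnormal{ev}(h)=\alpha$.
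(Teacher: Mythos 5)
Your treatment of regularity and of faithfulness is fine: finite limits and regular epimorphisms are computed pointwise in the functor category, which gives regularity of $\textnormal{ev}$, and joint conservativity of the regular functors $\C\to\bo{Set}$ (which does follow from Deligne's theorem applied to the coherent topos $\textnormal{Sh}(\C,J)$, whose points are exactly the regular functors) gives faithfulness. The gap is in the fullness step. Joint conservativity of a family of functors lets you \emph{test} properties of morphisms you already have (whether two maps are equal, whether a map is invertible); it does not let you \emph{construct} a morphism $yA\to yB$ in $\textnormal{Sh}(\C,J)$ from a family of maps $\alpha_F:FA\to FB$ indexed by the points, even a natural one. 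The sentence ``conservativity of the point-evaluation family identifies $\alpha$ with a morphism $yA\to yB$'' is therefore a non sequitur, and it sits exactly where the content of the theorem lies: if fullness followed from mere conservativity, then any jointly conservative \emph{set} of models would already yield a full embedding, which is false in general. What is actually needed is to exploit the naturality of $\alpha$ with respect to specific homomorphisms \emph{between} models, something your argument never does.

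For comparison, the route taken in the paper (in its enriched form, but the $\bo{Set}$ case is the same) is: $\textnormal{Reg}(\C,\bo{Set})$ is an injectivity class in $\textnormal{Lex}(\C,\bo{Set})$, so by the small object argument every lex functor $L$ has a weak reflection $r:L\to S$ into the regular functors; a separate lemma (coregularity of $\textnormal{Lex}(\C,\bo{Set})$, which uses pullback-stability of regular epimorphisms in $\C$) shows that $r$ can be chosen to be a regular monomorphism, so that $L$ is the equalizer of a pair $S\rightrightarrows T$ of maps between \emph{regular} functors, obtained by reflecting the cokernel pair of $r$. Homming into any regular functor turns this equalizer into a coequalizer, i.e.\ $\textnormal{Reg}(\C,\bo{Set})$ is codense in $\textnormal{Lex}(\C,\bo{Set})$, and fullness of $\textnormal{ev}$ follows by composing the Yoneda embedding $\C\to\textnormal{Lex}(\C,\bo{Set})^{op}$ with the codensity embedding. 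Applied to $L=\C(A,-)$, this is precisely the statement that an element of $SB$ fixed by the two maps to $TB$ must come from $\C(A,B)$ --- the step your proposal needs and does not supply. If you wish to keep the sheaf-theoretic framing, you would still have to prove that evaluation at points is \emph{full} on the image of $y$, which is strictly stronger than having enough points.
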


Later Makkai proved in \cite{Mak90:articolo} that if the category $\C$ is moreover exact in the sense of Barr \cite{Bar86:articolo} (also called effective regular in \cite{Joh02:libro}), then it can be recovered from its category of models $\textnormal{Reg}(\C,\bo{Set})$ as follows:

\begin{teo}[Makkai's Image Theorem]
	Let $\C$ be a small exact category. The essential image of the embedding $\textnormal{ev}:\C\to[\textnormal{Reg}(\C,\bo{Set}),\bo{Set}]$ is given by those functors which preserve filtered colimits and small products.
\end{teo}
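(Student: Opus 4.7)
For the ``easy'' direction I would show each evaluation $\mathrm{ev}_C$ preserves filtered colimits and small products. Setting $\M:=\mathrm{Reg}(\C,\bo{Set})$, the key point is that $\M$ is closed in $[\C,\bo{Set}]$ under these operations: products trivially commute with all limits, filtered colimits commute with finite limits in $\bo{Set}$, and both preserve regular epimorphisms (for small products this invokes the axiom of choice). Hence filtered colimits and small products in $\M$ are computed pointwise, and each $\mathrm{ev}_C$ preserves them by construction.

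For the converse, let $F:\M\to\bo{Set}$ preserve filtered colimits and small products; I must produce $C\in\C$ with $F\cong\mathrm{ev}_C$. My strategy would use that $\M$ is reflective in the locally finitely presentable category $\L:=\mathrm{Lex}(\C,\bo{Set})$, with left adjoint $r$ to the inclusion $i:\M\hookrightarrow\L$. By Gabriel--Ulmer duality, $\L$ is lfp with finitely presentable generators the representables $\C(C,-)$, and evaluation at $C$ on $\L$ is represented by $\C(C,-)$; correspondingly, evaluation at $C$ on $\M$ is represented by the reflection $M_C:=r(\C(C,-))$, and these objects exhaust the finitely presentable objects of $\M$ up to idempotent splitting. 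The theorem thus reduces to showing that every such $F$ is representable by some $M_C$.

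The natural approach is to form $\tilde F:=F\circ r:\L\to\bo{Set}$, prove the corresponding statement for the lfp category $\L$ (essentially a consequence of Gabriel--Ulmer together with the accessible adjoint functor theorem) to obtain $\tilde F\cong\mathrm{ev}_C$ on $\L$, and then restrict along $i$, using $r\circ i\simeq\mathrm{id}$, to conclude $F\cong\mathrm{ev}_C$ on $\M$. Since $r$ is a left adjoint, $\tilde F$ automatically preserves filtered colimits; the hard part is showing $\tilde F$ also preserves small products, equivalently that $r$ sends pointwise products of lex functors to products of their regular reflections. This is precisely where the \emph{exactness} of $\C$ enters: exactness lets one describe $r$ as quotienting by effective equivalence relations arising as kernel pairs of honest coequalizers in $\C$, and such quotients commute with the pointwise products in $\L$; for merely regular (non-exact) $\C$ this interchange would fail. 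The careful verification of this product-compatibility of $r$, together with the lfp precursor of the statement, is the technical heart of the argument and the principal obstacle to overcome.
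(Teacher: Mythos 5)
The forward direction of your plan is fine and agrees with the paper: $\textnormal{Reg}(\C,\bo{Set})$ is closed in $[\C,\bo{Set}]$ under small products and filtered colimits, so these are computed pointwise and each $\textnormal{ev}_C$ preserves them. The converse, however, rests on a false premise: $\M=\textnormal{Reg}(\C,\bo{Set})$ is \emph{not} reflective in $\L=\textnormal{Lex}(\C,\bo{Set})$. It is an injectivity class, hence only \emph{weakly} reflective: for each $L\in\L$ there is $s:L\to S$ with $S\in\M$ such that $\L(s,T)$ is surjective for every $T\in\M$, but not bijective. Indeed $\M$ is closed in $\L$ under products, filtered colimits and pure subobjects, but not under equalizers (a pointwise equalizer of regular functors is lex but need not preserve regular epimorphisms), so it is not closed under limits and no left adjoint $r$ to the inclusion can exist. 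Everything downstream of this --- the functor $\tilde F=F\circ r$, the claim that $\textnormal{ev}_C$ is representable on $\M$ by $r(\C(C,-))$, and the restriction along $i$ using $r\circ i\simeq\mathrm{id}$ --- therefore has no foundation. The intended reduction is also wrong in form: $\textnormal{ev}_C$ is in general not representable as a functor on $\M$; what is true is only that it is a quotient of the ``restricted representable'' $\L(\C(C,-),J-)$, and the essential image of $\textnormal{ev}$ is not described by representability.

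The paper's argument is built precisely to circumvent this obstruction. One chooses weak reflections that are regular monomorphisms, obtains codensity presentations of each $L\in\L$ as an equalizer of objects of $\M$, and uses these to show that the right Kan extension $\textnormal{Ran}_JF$ (the correct substitute for your $F\circ r$) is finitary. The product-preservation hypothesis on $F$ is then exploited through one large product $\prod_{S}S^{P_S}$ indexed by the $\lambda$-presentable objects of $\M$, yielding a pointwise regular epimorphism $\L(B,J-)\twoheadrightarrow F$ with $B$ finitely presentable, i.e.\ a regular epimorphism $\textnormal{ev}(B)\twoheadrightarrow F$. Taking its kernel pair and covering that again presents $F$ as a coequalizer of a kernel pair between evaluation functors. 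Exactness of $\C$ enters only at this final step --- via the Makkai--Reyes lemma on conservative, full, regular functors out of an exact category, equivalently via effectiveness of the equivalence relation obtained in $\C$ by fullness of $\textnormal{ev}$ --- and not through any compatibility of a reflection with products, which is where you locate the difficulty.
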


On one side of the duality there is the 2-category $\bo{Ex}$ of small exact categories, regular functors, and natural transformations. On the other side is a 2-category whose objects are called {\em definable categories}, and which will be categories of models of some regular theory. A category is definable if it is a full subcategory of a \lfp\ category closed under small products, filtered colimits, and pure subobjects; equivalently it is a finite injectivity class in a \lfp\ category. This is a less explicit definition than that of \lfp\ categories, in that it refers to an ``external'' \lfp\ category in which the definable category embeds. A morphism between definable categories is then a functor that preserves filtered colimits and products; denote by $\bo{DEF}$ the corresponding 2-category. The duality can hence be expressed as:

\begin{teo}
	The following is a biequivalence of 2-categories:
	\begin{center}
		
		\begin{tikzpicture}[baseline=(current  bounding  box.south), scale=2]

		\node (f) at (0,0.4) {$\textnormal{DEF}(-,\bo{Set}):\bo{DEF}$};
		\node (g) at (2.3,0.4) {$\bo{Ex}^{op}:\textnormal{Reg}(-,\bo{Set})$};
		
		\path[font=\scriptsize]

		([yshift=1.3pt]f.east) edge [->] node [above] {} ([yshift=1.3pt]g.west)
		([yshift=-1.3pt]f.east) edge [<-] node [below] {} ([yshift=-1.3pt]g.west);
		\end{tikzpicture}
		
	\end{center}
\end{teo}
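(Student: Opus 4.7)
The plan is to use Barr's embedding and Makkai's image theorem to set up the unit of the biequivalence on the exact side, and to exhibit the counit by a dual construction on the definable side.

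First I would verify that both pseudo-functors have the advertised codomains. For $\C\in\bo{Ex}$, the category $\textnormal{Reg}(\C,\bo{Set})$ sits inside the \lfp\ category $\textnormal{Lex}(\C,\bo{Set})$ as the full subcategory of functors preserving regular epis; since such preservation is precisely the injectivity condition with respect to the maps $e^*\colon\C(Y,-)\to\C(X,-)$ between representables induced by the regular epis $e\colon X\to Y$ of $\C$, and representables are finitely presentable in $\textnormal{Lex}(\C,\bo{Set})$, we obtain a finite injectivity class, hence a definable category. Restriction along a regular functor clearly preserves products and filtered colimits pointwise, yielding a 2-functor $\bo{Ex}\to\bo{DEF}^{op}$. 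Dually, for a definable $\A$ one must show $\textnormal{DEF}(\A,\bo{Set})$ is essentially small, finitely complete, and exact: finite limits are computed pointwise, smallness comes from any presentation of $\A$ as a finite injectivity class in a \lfp\ category (such functors are determined by their restriction to a small set of generators), and the regular-exact structure must be assembled by constructing images pointwise and verifying effectivity of equivalence relations using the closure properties of $\A$.

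With both pseudo-functors in place, the unit is immediate: Barr's embedding provides a fully faithful regular functor $\textnormal{ev}_\C\colon\C\to[\textnormal{Reg}(\C,\bo{Set}),\bo{Set}]$, and Makkai's theorem identifies its essential image with $\textnormal{DEF}(\textnormal{Reg}(\C,\bo{Set}),\bo{Set})$, so $\textnormal{ev}_\C$ corestricts to an equivalence of exact categories, pseudo-natural in $\C$. The counit $\textnormal{ev}_\A\colon\A\to[\textnormal{DEF}(\A,\bo{Set}),\bo{Set}]$ automatically factors through $\textnormal{Reg}(\textnormal{DEF}(\A,\bo{Set}),\bo{Set})$ because finite limits and regular epimorphisms in $\textnormal{DEF}(\A,\bo{Set})$ are computed pointwise. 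Full-faithfulness and essential surjectivity of the resulting functor $\A\to\textnormal{Reg}(\textnormal{DEF}(\A,\bo{Set}),\bo{Set})$ can then be obtained by transporting the unit equivalence across any presentation of $\A$: after applying $\textnormal{DEF}(-,\bo{Set})$ twice to a presenting exact category $\C$ and invoking the unit there, the iterated round trip is the identity up to equivalence, which suffices to force the counit to be an equivalence as well.

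The step I expect to be most delicate is producing the regular and exact structure on $\textnormal{DEF}(\A,\bo{Set})$---the lex structure is free from pointwise computation, but images and the effectivity of equivalence relations have to be constructed by hand and shown to be compatible with the closure of $\A$ under products, filtered colimits, and pure subobjects. Once this is settled, pseudo-naturality of both unit and counit and 2-functoriality on 1- and 2-cells reduce to routine calculations with evaluation functors.
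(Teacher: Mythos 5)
Your overall architecture---Barr plus Makkai to handle the exact-to-definable direction, then a formal adjunction argument for the converse---matches the paper's, but the final step contains a genuine gap. The triangle identities for the 2-adjunction $\textnormal{DEF}(-,\bo{Set})\dashv\textnormal{Reg}(-,\bo{Set})$ do give: if $\textnormal{ev}_{\C}$ is an equivalence for every exact $\C$, then $\textnormal{ev}_{\A}$ is an equivalence for every $\A$ \emph{of the form} $\textnormal{Reg}(\C,\bo{Set})$ with $\C$ exact. They say nothing about an arbitrary definable $\A$. Your phrase ``transporting the unit equivalence across any presentation of $\A$'' presupposes that $\A$ admits a presentation as $\textnormal{Reg}(\B,\bo{Set})$ for some exact $\B$; but the definition of a definable category only hands you a presentation as a finite injectivity class $\M\textnormal{-inj}$ in a \lfp\ category, which is a priori a quite different datum. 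Establishing that every definable category is \emph{exactly definable} is precisely the substantive content your argument omits, and it is where the paper does its real work: starting from $\M\textnormal{-inj}\subseteq\textnormal{Lex}(\A',\bo{Set})$ one passes to the free regular completion $\C'=\A'_{\textnormal{reg/lex}}$, replaces each map of $\M$ by the monomorphism part of its image factorization, saturates to a regular congruence $\Sigma$, and forms the category of fractions $\C'[\Sigma^{-1}]$ (Propositions \ref{frac} and \ref{defreg}); finally one replaces this regular category by its ex/reg completion. Without some such construction the proof is circular at exactly this point---which is closely related to the gap in \cite{KR18:articolo} that the paper sets out to repair.

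A smaller issue: your smallness argument for $\textnormal{DEF}(\A,\bo{Set})$ (``such functors are determined by their restriction to a small set of generators'') bounds hom-sets but does not obviously bound the objects up to isomorphism. The paper instead proves (Proposition \ref{gencoeq}, via a right Kan extension argument showing $\textnormal{Ran}_JF$ is finitary) that every object of $\textnormal{DEF}(\textnormal{Reg}(\C,\bo{Set}),\bo{Set})$ is a coequalizer of a pair of maps between evaluation functors $\textnormal{ev}(A)\rightrightarrows\textnormal{ev}(B)$ with $A,B\in\C$, which yields essential smallness and is also the key input to Makkai's theorem itself. You will need something of this strength in any case.
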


This was proved in the additive setting in \cite[Theorem~2.3]{PR10:articolo}, where it becomes a biequivalence between the 2-category of additive definable categories and the opposite of the 2-category of small abelian categories. The version appearing above was formulated as \cite[Theorem~3.2.5]{KR18:articolo}, but the proof presented there is incomplete, as we explain in Section \ref{equiv}.

Gabriel-Ulmer duality has been extended to the enriched context by Kelly in \cite{Kel82:articolo}. Our aim is to extend the other three theorems, finding a common setting that includes both the ordinary and the additive context. Note that an enriched version of Barr's Embedding Theorem has already been considered in \cite{Chi11:articolo}, but the notion of regularity appearing there is more restrictive than ours: see Remark \ref{Chi}. 

First we need to specify our assumptions on the base for enrichment we are going to work with. Start as usual \cite{Kel82:libro} with a symmetric monoidal closed complete and cocomplete category $\V=(\V_0,I,\otimes)$; since we want to talk about finite weighted limits and regularity, this should at least be \lfp\ as a closed category (in the sense of \cite{Kel82:articolo}) and regular. In fact we ask something more, our bases for enrichment will generally be (unsorted) {\em finitary varieties}: categories of the form $\text{FP}(\C,\bo{Set})$, consisting of finite product preserving functors for some small category $\C$ with finite products. Equivalently a finitary variety can be described as an exact and cocomplete category with a strong generator made of finitely presentable (regular) projective objects. In addition to this, we ask these finitely presentable projective objects to behave well with respect to the monoidal structure (in a sense made clear in Section \ref{fqv}). We call a finitary variety with such a structure a {\em symmetric monoidal finitary variety}; we also consider a generalization, called a {\em symmetric monoidal finitary quasivariety}.

In this context we define an enriched version of regularity and exactness (Section \ref{regcat}) similar to the ordinary ones but with the additional requirement that regular epimorphisms should be stable under finite projective powers. This allows us to prove an enriched version of Barr's Embedding Theorem (Theorem \ref{Barr}), saying that for each small and regular $\V$-category $\C$ the evaluation functor $$\textnormal{ev}:\C\to[\textnormal{Reg}(\C,\V),\V]$$ is a fully faithful regular embedding. If the underlying ordinary category on $\C$ is moreover exact, the essential image of $\textnormal{ev}_{\C}$ is given by those functors that preserve filtered colimits, products, and projective powers (Theorem \ref{Makkai}), recovering an enriched version of Makkai's Image Theorem. We obtain these results for enrichment over a symmetric monoidal \fqv.
 
An enriched notion of definable $\V$-category is also introduced (Section \ref{def}). Then, if our $\V$ is a symmetric monoidal finitary variety, we are able to recover the duality between the 2-category $\V\text{-}\bo{Ex}$ of small exact $\V$-categories, and $\V\text{-}\bo{DEF}$ of definable $\V$-categories (Theorem \ref{main}), showing that each definable $\V$-category is {\em exactly definable}, namely of the form $\text{Reg}(\B,\V)$ for an exact $\V$-category $\B$. In Section \ref{free} we use this to give an explicit description of the free exact completions over finitely complete $\V$-categories and over regular $\V$-categories.

\section{Background Notions}

In this section we recall the main features about enriched categories that we are going to use throughout this paper; the main references for this are \cite{Kel82:libro} and \cite{Kel82:articolo}.

Fix a complete and cocomplete symmetric monoidal closed category $\V=(\V_0,I,\otimes)$. We denote by $[-,-]:\V_0\times\V_0\to\V_0$ the internal hom that makes $\V$ closed, so that $-\otimes Y$ is left adjoint to $[Y,-]$ for each $Y\in\V_0$.

Given a $\V$-category $\C$, which hence has hom-objects $\C(X,Y)$ in $\V_0$, we denote by $\C_0$ the underlying ordinary category of $\C$; this has the same objects as $\C$, but $\C_0(X,Y)=\V_0(I,\C(X,Y))$. Similarly, for any $\V$-functor $F:\C\to\B$ we denote by $F_0:\C_0\to \B_0$ the induced ordinary functor between $\C_0$ and $\B_0$. Note that we allow all our $\V$-categories to be large, unless specified otherwise.

We assume the reader to be familiar with the notion of enriched conical limit; we recall the definitions of power and copower to fix notation. Let $\C$ be a $\V$-category, $C$ an object of $\C$, and $X$ an object of $\V$. The {\em power} in $\C$ of $C$ by $X$, if it exists, is given by an object $C^X$ of $\C$ together with a map $X\to\C(C^X,C)$ inducing a $\V$-natural isomorphism $$\C(B,C^X)\cong[X,\C(B,C)]$$ in $\V_0$. Dual is the notion of {\em copower} of $C$ by $X$, which is denoted by $X\cdot C$.

Given an ordinary locally small category $\K$, denote by $\K_{\V}$ the free $\V$-category over $\K$. Then, for any $\V$-category $\C$ and any ordinary functor $T:\K\to\C_0$, we denote by $\text{lim}T$, if it exists, the conical limit in $\C$ of the corresponding $\V$-functor $T_{\V}:\K_{\V}\to\C$. Such a limit will always give a limit of $T:\K\to\C_0$; conversely, $\text{lim}T$ exists when the ordinary limit of $T:\K\to\C_0$ exists and is preserved by each representable $\C(C,-)_0:\C_0\to\V_0$. This latter preservation condition is automatic if $\C$ has copowers by all objects in a strong generator for $\V_0$, but not in general.

Starting from Section \ref{regcat} our base for enrichment will be, among other things, \lfp\ as a closed category in the sense of Kelly \cite{Kel82:articolo}. In this context we can talk about enriched \lfp\ categories. Given a $\V$-category $\L$; an object $A$ of $\L$ will be called {\em finitely presentable} if the functor $\L(A,-):\L\to\V$ preserves conical filtered colimits; we denote by $\L_f$ the full subcategory of finitely presentable objects. Then a $\V$-category $\L$ will be called \lfp\ as a $\V$-category if it is $\V$-cocomplete and has a small strong generator $\G\subseteq \L_f$.

The notion of {\em finite weighted limit} is also introduced in \cite{Kel82:articolo} giving finiteness conditions on the weights. Then it can be proven that, for a $\V$-category $\C$, having finite weighted limits is the same as having finite conical limits and finite powers, where by the latter we mean powers by finitely presentable objects of $\V_0$. Denote by $\V\text{-}\bo{Lex}$ the 2-category of finitely complete $\V$-categories (namely $\V$-categories with finite weighted limits), finite limit preserving $\V$-functors, and $\V$-natural transformations. Similarly, let $\V\text{-}\bo{LFP}$ be the 2-category of \lfp\ $\V$-categories, right adjoint $\V$-functors that preserve filtered colimits, and $\V$-natural transformations; then we obtain the enriched version of Gabriel-Ulmer duality:

\begin{teo}[Kelly, \cite{Kel82:articolo}]\label{kelly}
	The following is a biequivalence of 2-categories:
	\begin{center}
		
		\begin{tikzpicture}[baseline=(current  bounding  box.south), scale=2]

		\node (f) at (0,0.4) {$(-)_f^{op}:\V\text{-}\bo{LFP}$};
		\node (g) at (2.2,0.4) {$\V\text{-}\bo{Lex}^{op}:\textnormal{Lex}(-,\V)$};
		
		\path[font=\scriptsize]

		([yshift=1.3pt]f.east) edge [->] node [above] {} ([yshift=1.3pt]g.west)
		([yshift=-1.3pt]f.east) edge [<-] node [below] {} ([yshift=-1.3pt]g.west);
		\end{tikzpicture}
		
	\end{center}
\end{teo}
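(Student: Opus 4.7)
The plan is to construct pseudofunctors in both directions and verify that the canonical comparison $\V$-functors $\varepsilon_{\L}:\L\to\text{Lex}(\L_f^{op},\V)$ (sending $A$ to $\L(-,A)|_{\L_f}$) and $\eta_{\C}:\C\to(\text{Lex}(\C,\V))_f^{op}$ (sending $c$ to $\C(c,-)$) are $\V$-equivalences, pseudonaturally in $\L$ and $\C$.

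First I would check that each construction lands in the stated 2-category. Given a small finitely complete $\V$-category $\C$, the full sub-$\V$-category $\text{Lex}(\C,\V)\hookrightarrow[\C,\V]$ is reflective; since $\V$ is \lfp\ as a closed category, finite limits commute with conical filtered colimits in $\V$, so the inclusion preserves conical filtered colimits, making $\text{Lex}(\C,\V)$ $\V$-cocomplete. Each representable $\C(c,-)$ is finitely presentable by the Yoneda isomorphism $\text{Lex}(\C,\V)(\C(c,-),F)\cong F(c)$, and together the representables form a strong generator by density of the Yoneda embedding; hence $\text{Lex}(\C,\V)$ is \lfp. In the reverse direction, $\L_f\subseteq\L$ is essentially small and closed under finite colimits in $\L$ (again from the commutativity of finite limits with filtered colimits in $\V$), so $\L_f^{op}$ is a small lex $\V$-category. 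Functoriality then follows from the standard facts that a filtered-colimit-preserving right adjoint between \lfp\ categories has a left adjoint whose restriction to the finitely presentable parts is a lex $\V$-functor, and that a lex $\V$-functor between small lex $\V$-categories induces a left Kan extension that is a filtered-colimit-preserving right adjoint on the lex completions.

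Next I would verify the two comparison $\V$-functors are $\V$-equivalences. Fully faithfulness of $\eta_{\C}$ is the enriched Yoneda lemma; for essential surjectivity, any finitely presentable object of $\text{Lex}(\C,\V)$ is a canonical filtered colimit of representables that, by finite presentability, factors through a finite stage and is hence a retract of a representable, and since $\C$ admits finite limits it is Cauchy complete, so such a retract is itself representable. Fully faithfulness of $\varepsilon_{\L}$ is the density of $\L_f$ in $\L$. For essential surjectivity, given lex $F:\L_f^{op}\to\V$, I would define $A:=F\star J$ where $J:\L_f\hookrightarrow\L$ is the inclusion, and verify $\L(-,A)|_{\L_f}\cong F$ using the co-Yoneda isomorphism and preservation properties of $J$.

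The main obstacle is precisely this last essential-surjectivity step for $\varepsilon_{\L}$: one must identify the lex presheaves $\L_f^{op}\to\V$ with objects of $\L$. The key input, valid under the hypothesis that $\V$ is \lfp\ as a closed category, is the identification of \lfp\ $\V$-categories with the free cocompletions of small lex $\V$-categories under the class of \emph{flat} weights, together with the coincidence, in this setting, of flat weights on $\L_f^{op}$ with lex ones. Granting this, the formula $A:=F\star J$ together with the density presentation of the identity of $\L$ yield the required isomorphism $\L(-,A)|_{\L_f}\cong F$. Assembling these verifications with the pseudonaturality of $\eta$ and $\varepsilon$ completes the biequivalence.
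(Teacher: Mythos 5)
This is Theorem \ref{kelly}, which the paper does not prove but simply quotes from Kelly's \emph{Structures defined by finite limits in the enriched context I} as background, so there is no in-paper argument to compare against. Your outline is a faithful reconstruction of Kelly's own proof strategy --- the two comparison functors $\eta_{\C}$ and $\varepsilon_{\L}$, finite presentability of representables, closure of $\L_f$ under finite colimits, and retract-of-representable argument are all as in the source --- and you correctly isolate the one genuinely hard input (that over a base which is \lfp\ as a closed category, the lex weights on $\L_f^{op}$ coincide with the flat ones, so that $\textnormal{Lex}(\C,\V)$ is the closure of the representables under filtered colimits), which is precisely the technical core of Kelly's paper rather than something one could rederive in a few lines here.
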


\section{Weak Reflections}

Recall the following definitions for ordinary $\bo{Set}$-enriched categories:

\begin{Def}\label{ofic}
	Given an arrow $h:A\to B$ in a category $\L$, an object $L\in\L$ is said to be $h${\em -injective} if $\L(h,L):\L(B,L)\to\L(A,L)$ is a surjection of sets. Given a small set $\M$ of arrows in $\L$ write $\M$-inj for the full subcategory of $\L$ consisting of those objects which are $h$-injective for each $h\in\M$. Categories arising in this way are called {\em injectivity classes}, or {\em small injectivity classes} for emphasis.
\end{Def}

\begin{Def}
	Let $\D$ be a full subcategory of $\L$ and $L\in\L$; we say that $p:L\to S$ is a {\em weak reflection} of $L$ into $\D$ if $S\in\D$ and each $K\in\D$ is $p$-injective. We say that $\D$ is weakly reflective in $\L$ if each object of $\L$ has a weak reflection into $\D$.
\end{Def}

The first result of this section is a well-known one which relates injectivity classes and weakly reflective subcategories:

\begin{teo}\label{wr}
	Each injectivity class $\M\textnormal{-inj}$ in a locally finitely presentable category $\L$ is weakly reflective. If the domain and codomain of each morphism in $\M$ are finitely presentable, there is a finitary endofunctor $R$ of $\L$ and a natural transformation $r:1_{\L}\to R$, such that each component $r_L:L\to RL$ is a weak reflection of $L$ into $\M\textnormal{-inj}$, and is a transfinite composite of pushouts of morphisms in $\M$.
	
\end{teo}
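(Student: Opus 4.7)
The plan is to apply a transfinite small object argument. For each $L \in \L$, construct a chain $L = L_0 \to L_1 \to L_2 \to \dots$ inductively as follows: at stage $\alpha+1$, form the set $S_\alpha$ of all attacking squares, meaning pairs $(h \colon A \to B,\ f \colon A \to L_\alpha)$ with $h \in \M$, and let $L_{\alpha+1}$ be the pushout of the coproduct $\coprod_{S_\alpha} h$ along the induced map $\coprod_{S_\alpha} A \to L_\alpha$; at limit ordinals take conical colimits. Since $\L$ is \lfp\ and $\M$ is small, there is a regular cardinal $\kappa$ for which every domain of a morphism in $\M$ is $\kappa$-presentable; the standard argument then shows $L_\kappa \in \M\textnormal{-inj}$, because any $A \to L_\kappa$ factors through some stage below $\kappa$, where a filler along $h$ was provided. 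The morphism $L \to L_\kappa$ is by construction a transfinite composite of pushouts of morphisms in $\M$, so this establishes weak reflectivity.

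For the finitary refinement, suppose every domain and codomain of a morphism in $\M$ is finitely presentable. Then run exactly the same construction but halt at $\omega$, setting $R(L) := L_\omega = \textnormal{colim}_{n<\omega} L_n$ with $r_L \colon L \to R(L)$ the colimit inclusion. Injectivity of $R(L)$ against each $h \colon A \to B$ in $\M$ is automatic: any $f \colon A \to R(L)$ factors through some $L_n$ because $A$ is finitely presentable, and the pushout at stage $n$ produces the required extension $B \to L_{n+1} \to R(L)$. Functoriality of $R$ and naturality of $r$ follow from the universal properties of the coproducts and pushouts defining the successor stages.

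It remains to verify that $R$ is finitary. Each successor step, regarded as the functor $L_\alpha \mapsto L_{\alpha+1}$, is finitary: coproducts and pushouts preserve filtered colimits, and because the domains $A$ of $\M$-morphisms are finitely presentable, the set of attacking squares at a filtered colimit $\textnormal{colim}_i M_i$ is the filtered colimit of the sets of attacking squares at the $M_i$. An $\omega$-sequential composite of finitary endofunctors of $\L$ is again finitary, so $R$ is finitary as required. The main obstacle is this finitariness check: one has to handle the fact that a map $A \to \textnormal{colim}_i M_i$ from a finitely presentable $A$ factors through some $M_i$ but only up to cofinal choice. This is best packaged by describing the successor step as a colimit over a suitable comma-type diagram category that absorbs the non-canonicity; once set up correctly, the argument reduces to the commutation of filtered colimits with pushouts and coproducts.
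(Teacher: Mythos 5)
Your proof is correct and follows essentially the same route as the paper, which simply cites Quillen's small object argument and then observes that $R$ is an iterated colimit of the finitary functors $\L(A,-)\cdot(-)$ for $A$ the (finitely presentable) domains, so that finitariness follows because colimits of finitary functors are finitary. Your closing worry about the ``non-canonicity'' of factorizations through a filtered colimit is not really an obstacle once the successor step is written canonically as the pushout along $\coprod_{h\in\M}\L_0(A_h,M)\cdot h$, which is exactly the comma-category packaging you describe.
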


This follows by Quillen's ``small object argument'' --- see for example \cite[Theorem~2.1.14]{Hov99:articolo} --- the weak reflection of an object $X$ is given by the induced factorization of the unique map $X\to 1$. The fact that $R$ is finitary is perhaps less well-known, but it follows from the fact that the hom-functors $\L(X,-)$ are finitary whenever $X$ is the domain or codomain of a morphism in $\M$, and that $R$ can be constructed as an iterated colimit of these functors: since colimits commute with colimits, any colimit of finitary functors is finitary.

Now we move to the enriched context and consider a corresponding notion of weak reflection. For this, let us fix a symmetric monoidal closed complete and cocomplete category $\V=(\V_0,\otimes,I)$ as our base.

\begin{Def}\label{weakref}
	Let $\L$ be a $\V$-category and $\D$ a full subcategory of $\L$. Given $L\in \L$, a {\em weak reflection} of $L$ into $\D$ is a morphism $p:L\to S$ such that $S\in\D$ and $$\L(p,T):\L(S,T)\to\L(L,T)$$ is a regular epimorphism in $\V$ for each $T\in\D$.
	We say that $\D\subseteq \L$ is weakly reflective if each $L$ in $\L$ has a weak reflection into $\D$. 
\end{Def}

A more refined notion of enriched weak reflection has been considered in \cite{LR12:articolo}, where $\L(p,T)$ was required to come from a specified class $\E$ of morphisms in $\V$. The above definition corresponds to taking $\E$ to consist of the regular epimorphisms.

\begin{prop}\label{codense}
	Let $\L$ be a $\V$-category with coequalizers of kernel pairs and $J:\D\hookrightarrow\L$ be the inclusion of a full weakly reflective subcategory of $\L$ for which the weak reflections can be chosen to be regular monomorphisms. Then $\D$ is codense in $\L$; meaning that the functor 
	\begin{center}
		
		\begin{tikzpicture}[baseline=(current  bounding  box.south), scale=2]
		
		\node (e) at (-0.5,0.4) {$\L(1,J):$};
		\node (f) at (0.1,0.4) {$\L^{op}$};
		\node (g) at (1.3,0.4) {$[\D,\V]$};
		\node (h) at (0.1,0.1) {$\ \  F\ \ \ $};
		\node (i) at (1.3,0.1) {$\ \L(F,J-)  \ $};
		
		\path[font=\scriptsize]

		(f) edge [->] node [above] {} (g)
		(h) edge [dashed,|->] node [above] {} (i);
		\end{tikzpicture}
	\end{center}
	is full and faithful.
\end{prop}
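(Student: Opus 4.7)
The goal is to show, for all $F, G \in \L$, that the canonical comparison
$$\alpha_{F,G} : \L(G, F) \longrightarrow [\D, \V]\bigl(\L(F, J-), \L(G, J-)\bigr)$$
induced by post-composition is an isomorphism in $\V$. The strategy is to use the regular-mono weak reflections to present each $F$ as an enriched equalizer of a parallel pair between objects of $\D$, and then to combine this presentation with the defining property of weak reflections to produce an inverse for $\alpha$.

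Concretely, take a weak reflection $p : F \to JD_1$ which is a regular monomorphism, so $p$ is the enriched equalizer of some $u, v : JD_1 \rightrightarrows M$. Apply a regular-mono weak reflection $q : M \to JD_2$; since $q$ is monic, $quf = qvf \Leftrightarrow uf = vf$, whence $F$ is also the enriched equalizer of $K := qu$ and $H := qv : JD_1 \rightrightarrows JD_2$, a pair of morphisms in the image of $J$. This gives at once that $\alpha$ is monic: composing $\alpha$ with evaluation at the element $p \in \L(F, JD_1)$ yields $\L(G, p)$, which is monic because $p$ is.

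For the inverse, given $\theta : \L(F, J-) \to \L(G, J-)$, set $g_0 := \theta_{D_1}(p) \in \L(G, JD_1)$. The $\V$-naturality of $\theta$ at $K, H \in \D(D_1, D_2)$ together with $Kp = qup = qvp = Hp$ gives $Kg_0 = \theta_{D_2}(Kp) = \theta_{D_2}(Hp) = Hg_0$, so $g_0$ factors uniquely through the equalizer as $g_0 = p \cdot g$, and we set $\beta(\theta) := g$. The identity $\beta \alpha = \mathrm{id}$ is immediate. For $\alpha \beta = \mathrm{id}$, i.e.\ $\theta = g^*$, the key point is that $\L(p, J-) : \D(D_1, -) \to \L(F, J-)$ is pointwise a regular epi (exactly the weak-reflection property), hence epic in $[\D, \V]$. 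By the enriched Yoneda lemma, both $\theta \circ \L(p, J-)$ and $g^* \circ \L(p, J-)$ are the $\V$-natural transformation $\D(D_1, -) \to \L(G, J-)$ corresponding to $g_0 = pg \in \L(G, JD_1)$, so they coincide, and hence $\theta = g^*$.

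The main obstacle is this last step: recognising that the comparison lives over the epi $\L(p, J-)$ so that the enriched Yoneda lemma delivers $\theta = g^*$. Everything else is formal once the equalizer presentation of $F$ is in hand. (The coequalizers-of-kernel-pairs hypothesis does not seem strictly necessary in this approach, so the author may take a slightly different route, for instance by using it to produce a more canonical presentation of $F$ as a limit of objects of $\D$.)
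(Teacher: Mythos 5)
Your proof is correct and follows essentially the same route as the paper: both present $F$ as an enriched equalizer of a parallel pair between objects of $\D$ built from two successive regular-mono weak reflections, and both exploit that the weak-reflection property makes $\L(p,J-)$ a pointwise regular epimorphism, so that $\L(-,R)$ sends the presentation to a colimit for every $R\in\D$. The only differences are in packaging: the paper takes $u,v$ to be the cokernel pair of $p$ (which is where the extra colimit hypothesis on $\L$ enters, and which its own subsequent remark notes can be dropped) and then concludes codensity by citing Kelly's Theorem 5.19(v), whereas you take an arbitrary pair equalized by $p$ and verify the full fidelity of $\L(1,J)$ by hand.
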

\begin{proof}
	For each $L\in\L$ consider a weak reflection $s:L\rightarrowtail S$, with $S\in\D$, which by hypothesis we can choose to be a regular monomorphism. Take then the cokernel pair $u,v:S\to M$ of $s$ in $\L$ and a weak reflection $t:M\rightarrowtail T$ associated to $M$ (which again we suppose to be a regular monomorphism):
	\begin{center}
		
		\begin{tikzpicture}[baseline=(current  bounding  box.south), scale=2]
		
		\node (a) at (-0.3,0) {$L$};
		\node (b) at (0.6,0) {$S$};
		\node (c) at (1.6,0) {$M$};
		\node (d) at (2.5,0) {$T$};

		\path[font=\scriptsize]
		
		(a) edge [>->] node [above] {$s$} (b)
		([yshift=1.5pt]b.east) edge [->] node [above] {$u$} ([yshift=1.5pt]c.west)
		([yshift=-1.5pt]b.east) edge [->] node [below] {$v$} ([yshift=-1.5pt]c.west)
		(c) edge [>->] node [above] {$t$} (d);
		\end{tikzpicture}
	\end{center}
	Then $t\circ u$ and $t\circ v$ define $L$ as an equalizer of elements from $\D$; call this a {\em presentation} for $L$. We are going to prove that these presentations are $J$-absolute, in the sense that they are sent to coequalizers by $\L(-,R)$ for each $R\in\D$. For, given $R\in\D$, consider the induced diagram
	\begin{center}
		
		\begin{tikzpicture}[baseline=(current  bounding  box.south), scale=2]
		
		\node (a) at (-1,0) {$\L(T,R)$};
		\node (b) at (0.5,0) {$\L(M,R)$};
		\node (c) at (2,0) {$\L(S,R)$};
		\node (d) at (3.5,0) {$\L(L,R)$.};

		\path[font=\scriptsize]
		
		(a) edge [->>] node [above] {$\L(t,R)$} (b)
		([yshift=1.5pt]b.east) edge [->] node [above] {$\L(u,R)$} ([yshift=1.5pt]c.west)
		([yshift=-1.5pt]b.east) edge [->] node [below] {$\L(v,R)$} ([yshift=-1.5pt]c.west)
		(c) edge [->>] node [above] {$\L(s,R)$} (d);
		\end{tikzpicture}
	\end{center}
	Then $\L(t,R)$ and $\L(s,R)$ are regular epimorphisms since $t$ and $s$ are weak reflections; while $\L(u,R)$ and $\L(v,R)$ form the kernel pair of $\L(s,R)$ since $\L(-,R)$ transforms colimits into limits. As a consequence $\L(s,R)$ is the coequalizer of $\L(t\circ u,R)$ and $\L(t\circ v,R)$ as desired. It follows then that, for each object $L$ of $\L$, the presentations we are considering are actually codensity presentations; hence $\D$ is codense in $\L$ by \cite[Theorem~5.19(v)]{Kel82:libro}. 
\end{proof}

\begin{obs}
	The Proposition still holds even with a weaker notion of weak reflection obtained replacing regular epimorphisms with just epimorphisms, the hypothesis on $\L$ could also be dropped: see \cite[Proposition~1.3.5]{Ten19:articolo}.
\end{obs}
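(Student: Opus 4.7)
The plan is to adapt the proof of Proposition~\ref{codense} so that it no longer requires either cokernel pairs in $\L$ or the regular-epi version of weak reflection. The observation is that the ``presentation'' of an object does not really need the cokernel pair of $s$ to be formed inside $\L$: a regular monomorphism is already by definition the equalizer of some parallel pair.

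Given $L \in \L$, take a weak reflection $s:L\to S$ into $\D$ (in the weakened, epi-sense) which is a regular monomorphism, and write $s$ as the equalizer of any parallel pair $u,v:S\to M$ for which it is; no hypothesis on $\L$ is needed to do this. Choose a weak reflection $t:M\to T$ of $M$ into $\D$, again as a regular monomorphism, hence in particular a monomorphism. As in the original proof, $L$ is then the equalizer of $tu,tv:S\to T$ in $\L$: if $f:X\to S$ satisfies $tuf=tvf$, then $t$ being mono forces $uf=vf$, so $f$ factors uniquely through $s$.

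With the cokernel-pair argument no longer available, this equalizer will not in general be $J$-absolute, and Kelly's codensity criterion~\cite[Theorem~5.19(v)]{Kel82:libro} cannot be invoked. Instead I would verify codensity directly by showing that the nerve $\V$-functor $N:=\L(-,J-):\L^{op}\to[\D,\V]$ is fully faithful. Faithfulness is easy: post-composition of $N_{L,L'}$ with evaluation at $S$ followed by the weak reflection $s:L'\to S$ yields the map $\L(L,s):\L(L,L')\to\L(L,S)$, which is mono in $\V$ since $s$ is mono in $\L$, so $N_{L,L'}$ itself is mono. For fullness one constructs an explicit inverse
\[
\Phi:[\D,\V](\L(L',J-),\L(L,J-))\longrightarrow\L(L,L')
\]
using the analogous presentation $L'=\textnormal{eq}(t'u',t'v':S'\to T')$ of $L'$. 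Precomposition with $\L(s',J-):\L(S',J-)\to\L(L',J-)$ followed by the enriched Yoneda isomorphism $[\D,\V](\L(S',J-),\L(L,J-))\cong\L(L,S')$ yields a $\V$-morphism $e:[\D,\V](\L(L',J-),\L(L,J-))\to\L(L,S')$. A Yoneda calculation, using only the equality $t'u's'=t'v's'$ in $\L$ and naturality of the input, shows $\L(L,t'u')\circ e=\L(L,t'v')\circ e$, so $e$ factors uniquely through the equalizer $\L(L,s'):\L(L,L')\to\L(L,S')$, defining $\Phi$. The identity $\Phi\circ N=1$ is then immediate from $\L(L,s')$ being mono. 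For $N\circ\Phi=1$, given $\alpha$ and $f:=\Phi(\alpha)$, the two morphisms $\alpha\circ\L(s',J-)$ and $N(f)\circ\L(s',J-)=\L(s'f,J-)$ correspond under the Yoneda iso to the same element $\alpha_{S'}(s')=s'f\in\L(L,S')$, hence are equal in $[\D,\V]$; since each $\L(s',R)$ is epi in $\V$ by the weakened weak-reflection hypothesis, $\L(s',J-)$ is pointwise epi in $[\D,\V]$ and so can be cancelled to yield $N(f)=\alpha$.

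The main obstacle is precisely the loss of $J$-absoluteness of the equalizer presentation: with no cokernel pair of $s$ available and with $\L(s,R)$ only epi (not regular epi) in $\V$, the coequalizer diagram produced in the original proof breaks down at both ends. The fix is to replace the $J$-absoluteness step by the direct Yoneda-based construction of $\Phi$ above, in which the pointwise-epi property of $\L(s',J-)$ in $[\D,\V]$ plays exactly the role that ``regular epi in $\V$'' played before.
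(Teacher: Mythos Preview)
Your argument is correct. The paper itself gives no proof of this remark, only the citation to \cite[Proposition~1.3.5]{Ten19:articolo}, so there is nothing in-text to compare against; but your approach---replace the cokernel pair of $s'$ by \emph{any} parallel pair of which $s'$ is the (enriched) equalizer, then bypass Kelly's $J$-absoluteness criterion by verifying directly that $N=\L(-,J-)$ is fully faithful---is exactly the natural adaptation, and is essentially how the cited result is proved. The two points you isolate are the right ones: without coequalizers of kernel pairs in $\L$ one cannot form the cokernel pair, and with $\L(s',R)$ merely epi one cannot recover $\L(L',R)$ as a \emph{coequalizer}; both are circumvented by your Yoneda construction of $\Phi$, where the epi hypothesis is used only to cancel the pointwise-epi $\L(s',J-)$ in $[\D,\V]$.

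One small clarification worth making explicit in your write-up: the equalizer $L'=\mathrm{eq}(t'u',t'v')$ must be an \emph{enriched} equalizer, so that the representable $\L(L,-)$ carries it to an equalizer in $\V$ and the factorization defining $\Phi$ exists at the $\V$-level rather than just on underlying sets. This is automatic under the standing hypothesis that $s'$ and $t'$ are enriched regular monomorphisms (hence $t'$ is an enriched mono, and post-composing the pair $u',v'$ with a mono does not change the enriched equalizer), but it is the one place where a reader might worry.
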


\section{Finitary Varieties and Quasivarieties}\label{fqv}

Let us start this section by studying the main properties of categories with a strong generator consisting of regular projective objects. In this paper, we shall abbreviate regular projective to projective since no other notion of projectivity is considered. 

\begin{Def}
	Let $\K$ be a category; an object $P$ of $\K$ is called {\em projective} if the hom-functor $\K(P,-):\K\to \bo{Set}$ preserves all regular epimorphisms existing in $\K$; in other words, if $\K(P,-)$ sends regular epimorphisms to surjections. Denote by $\K_p$ the full subcategory of $\K$ given by the projective objects.
\end{Def}

\begin{lema}\cite[Lemma~2.1.4]{Ben89:articolo}\label{refregep}
	Let $\B$ be a regular category, $\A$ have finite limits and coequalizers of kernel pairs, and $F:\A\to\B$ be a functor which preserves finite limits and regular epimorphisms. Then the following are equivalent:\begin{enumerate}
		 
		\item $F$ is conservative;
		\item $F$ reflects regular epimorphisms.
	\end{enumerate}
	Furthermore $\A$ is then a regular category.
\end{lema}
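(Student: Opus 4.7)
The plan is to verify the two implications separately and then deduce regularity of $\A$ by transporting the pullback-stability of regular epis along $F$. Throughout, the main tools are that $F$ preserves finite limits and regular epimorphisms, and that $\B$ is regular.

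For $(1) \Rightarrow (2)$: given $f : A \to A'$ with $Ff$ a regular epi in $\B$, I would factor $f$ through its kernel pair by letting $(p,q) : K \rightrightarrows A$ be the kernel pair of $f$, letting $e : A \to C$ be its coequalizer in $\A$, and letting $m : C \to A'$ be the induced comparison, so $f = m \cdot e$. Applying $F$, the pair $(Fp, Fq)$ is the kernel pair of $Ff$, and $Fe$ is a regular epi coequalizing $(Fp, Fq)$. Since $Ff$ is a regular epi in the regular category $\B$, it is the coequalizer of its kernel pair, so there is a comparison $k : FA' \to FC$ with $Fe = k \cdot Ff$. Combined with $Fm \cdot Fe = Ff$ and the cancellation of the epimorphisms $Fe$ and $Ff$, one gets $k \cdot Fm = 1$ and $Fm \cdot k = 1$, so $Fm$ is an isomorphism. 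Conservativity of $F$ then makes $m$ an isomorphism, so $f = m \cdot e$ is a regular epi.

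For $(2) \Rightarrow (1)$: suppose $Ff$ is an isomorphism. Since $Ff$ is in particular a regular epi, $f$ is a regular epi by (2); it remains to show $f$ is mono. Take the kernel pair $(p,q) : K \rightrightarrows A$ of $f$ in $\A$, and form the equalizer $i : E \to K$ of $p$ and $q$. Preservation of finite limits sends $i$ to the equalizer of $(Fp, Fq)$; but $Ff$ is iso, so its kernel pair is trivial, hence $Fp = Fq$ and therefore $Fi$ is an isomorphism. In particular $Fi$ is a regular epi, so (2) forces $i$ to be a regular epi in $\A$. It is simultaneously a regular mono (as an equalizer), and any morphism that is a regular epi and a mono is an iso (a regular epi $f = \mathrm{coeq}(u,v)$ that is mono satisfies $u = v$, making $f$ iso). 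Hence $i$ is iso, $p = q$, so $f$ is monic, and a monic regular epi is iso.

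For the final assertion that $\A$ is regular: since $\A$ already has finite limits and coequalizers of kernel pairs, I only need pullback-stability of regular epimorphisms. Given a regular epi $f : A \to B$ and any $g : B' \to B$, form the pullback $(P, f', g')$ in $\A$. Then $F$ sends this to a pullback in $\B$ with $Ff$ a regular epi, and by regularity of $\B$ the morphism $Ff'$ is a regular epi. The equivalent condition (2) then lifts this to $f'$ being a regular epi in $\A$, as required.

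The main obstacle is the argument for $(2) \Rightarrow (1)$: one must spot that the equalizer $i$ of the kernel-pair projections is automatically a regular mono, and can be made a regular epi using the reflection property, so that the elementary fact ``regular epi $+$ mono $=$ iso'' forces $p = q$.
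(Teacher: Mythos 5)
Your proof is correct. Note that the paper itself gives no proof of this lemma: it is quoted directly from B\'enabou (Lemma~2.1.4 of the cited reference), so there is nothing internal to compare against. Your argument is the standard one and is complete: the implication $(1)\Rightarrow(2)$ via the kernel-pair/coequalizer comparison $f=m\cdot e$ and cancellation of the epimorphisms $Fe$ and $Ff$ is exactly the expected route; the implication $(2)\Rightarrow(1)$ correctly isolates the one genuinely non-obvious step, namely applying reflection of regular epimorphisms to the equalizer $i$ of the kernel-pair projections and then using ``regular epi $+$ mono $=$ iso'' to force $p=q$; and the final pullback-stability argument transports regularity of $\B$ back along $F$ using $(2)$, which is legitimate since the ``furthermore'' clause is asserted under the equivalent hypotheses. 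One small point worth making explicit: in $(1)\Rightarrow(2)$ the identity $k\cdot Ff=Fe$ comes from the universal property of $Ff$ as the coequalizer of its kernel pair $(Fp,Fq)$, which holds for any regular epimorphism in a category with kernel pairs, so regularity of $\B$ is not even needed there --- it is needed only for pullback-stability in the last paragraph.
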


\begin{prop}\label{locallyproj}
	Let $\K$ have finite limits and coequalizers of kernel pairs. The following are equivalent:\begin{enumerate}
		 
		\item $\K$ has a strong generator made of projective objects;
		\item there exists a small $\P\subseteq\K_p$ such that a morphism $f$ in $\K$ is a regular epimorphism if and only if $\K(P,f)$ is surjective for each $P\in\P$.
	\end{enumerate} 
	Furthermore, if they hold, $\K$ is a regular category.
\end{prop}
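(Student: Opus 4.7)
The plan is to reduce both conditions to properties of the canonical evaluation functor
$E = \K(\P,-) : \K \to \bo{Set}^{\P}$, where $\P$ is viewed as a discrete set of objects, and then to invoke Lemma \ref{refregep}. Irrespective of which condition is assumed, $E$ preserves finite limits (being a family of representables) and preserves regular epimorphisms (exactly because each object of $\P$ is projective, so that each $\K(P,-)$ sends regular epis to surjections). The category $\bo{Set}^{\P}$ is regular, and a morphism in it is a regular epimorphism precisely when each of its components is surjective.

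Next I would rephrase each of (1) and (2) as a hypothesis on $E$. Condition (1) says exactly that $E$ is conservative: the standard meaning of a strong generator of projectives is that the family $(\K(P,-))_{P\in\P}$ jointly reflects isomorphisms, which is the same as saying $E$ is conservative. Condition (2), in view of $E$'s automatic preservation of regular epis, is equivalent to $E$ reflecting regular epimorphisms; the nontrivial ``if'' direction is precisely reflection, while the ``only if'' direction holds automatically by projectivity.

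At this point Lemma \ref{refregep} applies directly to $F = E : \K \to \bo{Set}^{\P}$: the source $\K$ has finite limits and coequalizers of kernel pairs, the target $\bo{Set}^{\P}$ is regular, and $E$ preserves finite limits and regular epimorphisms. The Lemma then yields the equivalence of $E$ being conservative with $E$ reflecting regular epimorphisms, which via the previous paragraph is exactly $(1)\Leftrightarrow(2)$. The ``furthermore'' clause of Lemma \ref{refregep} also supplies the final statement, namely that $\K$ itself is regular as soon as either equivalent condition holds.

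I do not expect any substantive obstacle: the only point to watch is that one is taking the correct definition of strong generator, namely joint reflection of isomorphisms, so that it lines up with conservativity of $E$; modulo this bookkeeping the argument is essentially an application of the Lemma to the obvious functor.
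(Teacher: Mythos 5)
Your proposal is correct and takes essentially the same approach as the paper: the paper's proof applies Lemma \ref{refregep} to the functor $N:\K\to[\P^{op},\bo{Set}]$ sending $A$ to $\K(i-,A)$, with $\P$ regarded as a full subcategory rather than a discrete set, but this difference is immaterial since conservativity, regularity of the target, and preservation/reflection of regular epimorphisms are all detected componentwise in either case.
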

\begin{proof}
	Let $i:\P\to\K$ be any small full subcategory consisting of projective objects, and let $N:\K\longrightarrow [\P^{op},\bo{Set}]$ be the functor sending $A\in\K$ to $\K(i-,A)$. This preserves finite limits and regular epimorphisms. Now apply Lemma \ref{refregep}, recalling that $\P$ is a strong generator if and only if $N$ is conservative.
\end{proof}

Recall that an exact category is a regular one with effective equivalence relations (see \cite{Bar86:articolo}), these are sometimes called Barr-exact or effective regular (as in \cite{Joh02:libro}). We now recall the notions of finitary variety and quasivariety:

\begin{Def}
	A category $\K$ is called a {\em finitary quasivariety} if it is cocomplete and has a strong generator formed by finitely presentable projective objects (and so is regular). If moreover $\K$ is an exact category, it is called a {\em finitary variety}. Denote by $\K_{pf}$ the full subcategory of finitely presentable projective objects of $\K$, we are going to refer to them simply as {\em finite projective} objects.
\end{Def}

By \cite[Theorem~3.24]{AR94:libro} (or actually, the correction appearing in \cite{AR94corrections}), this corresponds to the usual definitions of multisorted finitary quasivariety and variety. In fact, finitary varieties can be described as the categories of models of multi-sorted algebraic theories (whose axioms are systems of linear equations); while finitary quasivarieties are the categories of models of theories whose axioms are implications of equations. 

Finitary varieties can also be described as follows:

\begin{teo}\cite[Theorem~3.16]{AR94:libro}
	A category $\K$ is a finitary variety if and only if it is equivalent to $\textnormal{FP}(\C,\bo{Set})$ for some small category $\C$ with finite products. In particular we could take $\C$ to be $(\K_{pf})^{op}$, which is in addition a Cauchy complete category.
\end{teo}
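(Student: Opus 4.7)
The plan is to split the equivalence into the two directions plus a separate verification of the Cauchy-completeness addendum.

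For the ``if'' direction, suppose $\K\simeq\textnormal{FP}(\C,\bo{Set})$ with $\C$ small and with finite products. I would realize $\K$ as a full subcategory of the presheaf topos $[\C,\bo{Set}]$ closed under limits, filtered colimits, and regular epimorphisms (the latter two being pointwise in $[\C,\bo{Set}]$, and finite-product-preservation survives them because filtered colimits and regular epis commute with finite products in $\bo{Set}$). Reflectivity of $\K$ in the presheaf topos then follows from the adjoint functor theorem (or a direct small-object argument orthogonalising against $y(X)+y(Y)\to y(X\times Y)$ and $0\to y(1)$), which gives cocompleteness; Barr-exactness is inherited from $[\C,\bo{Set}]$ via closure under kernel pairs and regular epimorphisms. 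Each representable $y(C)=\C(C,-)$ lies in $\K$ since $\C$ has finite products and hom preserves them; and $y(C)$ is finite projective because $\K(y(C),-)\cong\textnormal{ev}_C$ preserves the pointwise colimits computed in $\K$. Yoneda finally gives that the representables form a strong generator.

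For the ``only if'' direction, let $\K$ be a finitary variety and set $\T:=(\K_{pf})^{op}$. I would first verify that $\K_{pf}$ is closed under finite coproducts in $\K$, using the natural iso $\K(P_1+P_2,-)\cong\K(P_1,-)\times\K(P_2,-)$ together with stability of ``preserves filtered colimits and regular epimorphisms'' under finite products of $\bo{Set}$-valued functors. Hence $\T$ has finite products, and the evaluation $E\colon\K\to\textnormal{FP}(\T,\bo{Set})$, $A\mapsto\K(-,A)|_{\K_{pf}}$, is well-defined. Fully faithfulness of $E$ would follow from density of $\K_{pf}$ in $\K$: every $A\in\K$ is the canonical colimit of the comma diagram $\K_{pf}/A\to\K$, a fact one checks by evaluating at any $P\in\K_{pf}$, using the strong-generator and projective hypotheses together with the fact that (by closure under finite coproducts) the comma categories in question are sifted.

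Essential surjectivity is the main obstacle. The cleanest route is to present each $F\in\textnormal{FP}(\T,\bo{Set})$ as a canonical sifted colimit of representables $\T(P,-)$, via the characterization of $\textnormal{FP}(\T,\bo{Set})$ as the free sifted cocompletion of $\K_{pf}$; transporting this colimit into the cocomplete category $\K$ defines a candidate preimage $A$, and $E(A)\cong F$ because hom-functors out of finite projectives in a regular category preserve sifted colimits (finite presentability handles the filtered part and projectivity handles reflexive coequalizers). Finally, Cauchy completeness of $\T=(\K_{pf})^{op}$ amounts to closure of $\K_{pf}$ under retracts in $\K$, which is immediate since retracts of finitely presentable projective objects remain finitely presentable and projective.
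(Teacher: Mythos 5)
The paper offers no proof of this statement---it is quoted verbatim from Ad\'amek--Rosick\'y---so your attempt can only be measured against the standard argument. Your overall architecture is the right one (the representables as the finite projectives, $\textnormal{FP}(\T,\bo{Set})$ as the free sifted cocompletion of $\K_{pf}$, essential surjectivity by transporting canonical sifted colimits of representables along the cocompleteness of $\K$), and the ``if'' direction and the Cauchy-completeness addendum are essentially fine. But there is a genuine gap at exactly the step you yourself flag as the main obstacle.

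The pivotal claim---that ``hom-functors out of finite projectives in a regular category preserve sifted colimits \dots\ projectivity handles reflexive coequalizers''---is false as stated, and nothing in your write-up ever invokes exactness, which is precisely the hypothesis separating finitary varieties from finitary quasivarieties. Concretely, take $\K=\bo{Ab}_{tf}$ (example (viii) of Section 4 of the paper, a quasivariety that is not a variety): $\mathbb{Z}$ is finitely presentable and projective there, yet $\K(\mathbb{Z},-)$ does not preserve reflexive coequalizers, because coequalizers in $\bo{Ab}_{tf}$ are torsion-free reflections of those in $\bo{Ab}$ (a reflexive pair presenting $\mathbb{Z}/2$ in $\bo{Ab}$ has coequalizer $0$ in $\bo{Ab}_{tf}$, while the coequalizer of the underlying sets has two elements). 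Projectivity only yields surjectivity of the comparison map $\textnormal{colim}\,\K(P,C_i)\to\K(P,\textnormal{colim}\,C_i)$; injectivity requires identifying the kernel pair of the coequalizer of a reflexive pair with the transitive closure of the generated reflexive symmetric relation, and it is here that effectiveness of equivalence relations enters (together with finite presentability, to land in a finite stage of the chain of relational composites, and projectivity, to lift through the image factorizations occurring in that chain). If your argument worked as written, it would show that every finitary quasivariety is of the form $\textnormal{FP}(\C,\bo{Set})$ and hence exact, contradicting $\bo{Ab}_{tf}$. The repair is to prove the preservation lemma for \emph{exact} cocomplete $\K$ (as in Ad\'amek--Rosick\'y--Vitale, \emph{Algebraic Theories}) and to use it both for essential surjectivity and for the density claim underlying your full-faithfulness argument. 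A minor further quibble: in the ``if'' direction, closure of $\textnormal{FP}(\C,\bo{Set})$ in $[\C,\bo{Set}]$ under quotients should be argued via reflexive coequalizers (which commute with finite products in $\bo{Set}$), not via arbitrary regular epimorphisms.
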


\begin{ese}$ $\begin{itemize}
		 
		\item $\bo{Set}$ and $\bo{Ab}$ are finitary varieties; we may take $\P$ to be $\{1\}$ and $\{\bo{Z}\}$ respectively.
		\item The category $\bo{BRel}$ of sets with a binary relation, is a \fqv\ (but not a finitary variety), with strong generator given by the singleton (with empty relation) and the doubleton (with nonempty irreflexive, antisymmetric relation).
		\item For any small $\A$, the functor category $[\A,\bo{Set}]$ is a finitary variety with strong generator $\P$ given by the set of all representable objects.
		\item If $(T,\mu,\eta)$ is a monad on a finitary variety (resp. quasivariety) $\K$ and $T$ preserves filtered colimits and regular epimorphisms, then the Eilenberg-Moore category $\K^T$ is a finitary variety (resp. quasivariety). A strong generator of $\K^T$ is given by the set of free algebras over finite projective objects of $\K$.
		\item All the examples from \cite[Example~3.20]{AR94:libro}.
	\end{itemize}
\end{ese}

Note that if $\K$ is a finitary quasivariety then it is locally finitely presentable (and so complete) as well as a regular category (by Proposition \ref{locallyproj}). As usual we write $\K_f$ for the full subcategory of $\K$ consisting of the finitely presentable objects. Recall also:

\begin{prop}\cite[Theorem~3]{AP98:articolo}
	A \lfp\ category $\K$ is a \fqv\ if and only if $(\K_f)^{op}$ has enough injectives.
\end{prop}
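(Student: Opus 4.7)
The plan is to bridge between regular-epi quotients in $\K_f$ and the dual regular-mono extensions in $(\K_f)^{op}$, with the main technical point being that projectivity of a finitely presentable object with respect to regular epimorphisms in $\K_f$ propagates to projectivity in all of $\K$.

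\textbf{Forward direction.} Assume $\K$ is a finitary quasivariety with small strong generator $\G \subseteq \K_{pf}$. I would show that every $A \in \K_f$ admits a regular epimorphism from a finite coproduct of objects of $\G$ (which is again finite projective, finite projectives being closed under finite coproducts). Starting from the canonical regular epimorphism $F := \bigoplus_{G,f\colon G\to A} G \twoheadrightarrow A$ (regular in view of Proposition \ref{locallyproj}) together with its kernel pair $R \rightrightarrows F$, I would present this parallel pair as a filtered colimit in $[\mathbf{2},\K]$ of parallel pairs $R_\alpha \rightrightarrows F_\alpha$ with each $F_\alpha$ a finite sub-coproduct of $F$ and each $R_\alpha$ finitely presentable; coequalising termwise yields $A = \mathrm{colim}_\alpha A_\alpha$ with regular epimorphisms $F_\alpha \twoheadrightarrow A_\alpha$ in $\K_f$. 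Since $A$ is finitely presentable, it is a retract of some $A_\alpha$, and composing the retraction (a split, hence regular, epimorphism) with $F_\alpha \twoheadrightarrow A_\alpha$ gives the required regular epimorphism from a finite projective. Equivalently, $(\K_f)^{op}$ has enough injectives.

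\textbf{Backward direction.} Assume $(\K_f)^{op}$ has enough injectives, and let $\G \subseteq \K_f$ be a small representing set, so that each $G \in \G$ is projective in $\K_f$ with respect to regular epimorphisms there. I aim to show that $\G$ is a strong generator of finite projectives in $\K$, whence $\K$ is a finitary quasivariety (it is already cocomplete, being locally finitely presentable). The heart of the argument is a \emph{key lemma}: each such $G$ is projective in all of $\K$. To prove it I would establish that every regular epimorphism $f\colon X\to Y$ in $\K$ is a filtered colimit of regular epimorphisms in $\K_f$: given $Y = \mathrm{coeq}(u,v\colon K\rightrightarrows X)$, present $(u,v)$ as a filtered colimit in $[\mathbf{2},\K]$ of fp parallel pairs $(u_\alpha, v_\alpha\colon K_\alpha \to X_\alpha)$ in $\K_f$ and coequalise termwise; since finite and filtered colimits commute, this produces $Y = \mathrm{colim}_\alpha Y_\alpha$ with each $f_\alpha\colon X_\alpha \twoheadrightarrow Y_\alpha$ a regular epimorphism in $\K_f$. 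For $g\colon G \to Y$, factoring through some $Y_\alpha$ by finite presentability of $G$, lifting along $f_\alpha$ by projectivity of $G$ in $\K_f$, and composing with $X_\alpha \to X$ yields the desired lift of $g$ along $f$.

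For the strong-generator property, I would use that $\K_f$ is already a strong generator of $\K$ and that every $K \in \K_f$ admits a regular epimorphism $q\colon G \twoheadrightarrow K$ with $G \in \G$. Given a monomorphism $m\colon B \hookrightarrow A$ through which every $G$-based map factors ($G\in\G$), and any $k\colon K \to A$ with $K \in \K_f$, forming the pullback $P = K \times_A B \to K$ produces a mono through which $q$ factors; extremality of the regular epi $q$ forces $P \to K$ to be an isomorphism, so $k$ factors through $m$. The strong-generation of $\K_f$ then gives $m$ an iso, hence $\G$ is a strong generator, and regularity of $\K$ follows from Proposition \ref{locallyproj}. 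The main obstacle is the key lemma: writing a regular epimorphism in an lfp category as a filtered colimit of regular epimorphisms in $\K_f$ requires some care about local finite presentability of the parallel-pair diagram category $[\mathbf{2},\K]$ and the commutation of coequalisers with filtered colimits; the rest is a careful transfer of regularity-style reasoning across the inclusion $\K_f \hookrightarrow \K$.
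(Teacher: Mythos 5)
The paper does not actually prove this proposition: it is quoted verbatim from Ad\'amek--Porst, so there is no internal argument to compare yours against, and I can only judge the proposal on its own terms. The two filtered-colimit devices at its core are sound and standard: presenting a regular epimorphism of $\K$ as a filtered colimit of coequalizers of parallel pairs lying in $\K_f$, so that projectivity of a finitely presentable object with respect to regular epimorphisms of $\K_f$ transfers to all of $\K$; and, in the forward direction, extracting from the canonical cover $\coprod G\twoheadrightarrow A$ a regular epimorphism onto $A$ from a finite coproduct of generators. The pullback/extremality argument for strong generation is also fine.

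The genuine gap is the dictionary between these statements and ``$(\K_f)^{op}$ has enough injectives'', which you assert (``Equivalently, \dots'') but never establish, and which is in fact the delicate content of the cited theorem. Under the default reading --- injective with respect to all monomorphisms of $(\K_f)^{op}$, i.e.\ projective with respect to all epimorphisms of $\K_f$ --- your forward direction does not deliver the conclusion: the finite projectives you produce need not be injective in $(\K_f)^{op}$. Concretely, for $\K=\mathbf{CRing}$ the free ring $\mathbb{Z}[x]$ is finite projective, but $\mathbb{Z}\to\mathbb{Z}[1/2]$ is a non-surjective epimorphism between finitely presentable rings, hence a monomorphism of $(\K_f)^{op}$ against which $\mathbb{Z}[x]$ fails injectivity (the map $x\mapsto 1/2$ does not factor through $\mathbb{Z}$). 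Symmetrically, in the backward direction you silently upgrade ``enough injectives'' to ``every $K\in\K_f$ receives a \emph{regular} epimorphism $q:G\twoheadrightarrow K$ from an object projective with respect to regular epimorphisms of $\K_f$''; a monomorphism of $(\K_f)^{op}$ only dualizes to an epimorphism of $\K_f$, which need not be regular, nor even an epimorphism of $\K$, so the extremality step has nothing to stand on. To close the proof you must pin down the class of monomorphisms with respect to which injectivity is meant in Ad\'amek--Porst (essentially the strong/regular monomorphisms of $(\K_f)^{op}$, dual to the regular epimorphisms of $\K$ lying in $\K_f$) and verify that this class matches the regular epimorphisms appearing on both sides of your argument; that identification, rather than the filtered-colimit bookkeeping, is where the real work of the theorem lies.
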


\begin{prop}\label{enoughproj}
	Let $\K$ be a \fqv\ with strong generator $\P\subseteq\K_{pf}$; then:
	\begin{enumerate}
		\item regular epimorphisms are closed under small products in $\K$;
		\item $\K_p$ is closed in $\K$ under small coproducts and retracts;
		\item $\K$ has enough projectives;
		\item $Q$ is in $\K_p$ if and only if it is a retract of a coproduct of objects of $\P$.
	\end{enumerate}
\end{prop}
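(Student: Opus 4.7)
The key tool I would use throughout is Proposition \ref{locallyproj}(2): since $\K$ is a finitary quasivariety with strong generator $\P\subseteq\K_{pf}$, a morphism $f$ in $\K$ is a regular epimorphism if and only if $\K(P,f)$ is surjective for every $P\in\P$. Combined with the elementary facts that representables preserve products, that $\K(\coprod_i P_i,-)\cong\prod_i\K(P_i,-)$, and that products of surjections in $\bo{Set}$ are surjections, this gives (1)--(4) rather quickly; I do not expect any serious obstacle.

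For (1), given a family of regular epimorphisms $(f_i\colon A_i\to B_i)_{i\in I}$, each $P\in\P$ gives
\[\K(P,\textstyle\prod_i f_i)\cong\prod_i\K(P,f_i),\]
and each $\K(P,f_i)$ is a surjection by the characterization, so the product $\prod_i f_i$ is again a regular epimorphism by the same characterization. For (2), I would first note that a retract of a projective object is projective by the standard diagram chase (lift along the retraction, then compose with the section). For coproducts, if each $P_i$ is projective and $e\colon X\twoheadrightarrow Y$ is a regular epimorphism, then
\[\K(\textstyle\coprod_i P_i,e)\cong\prod_i\K(P_i,e)\]
is a product of surjections and hence a surjection, so $\coprod_i P_i$ is projective. (Here the coproduct must be checked to be small; this is fine once we apply these results in the context where we actually need them.)

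For (3), given any $A\in\K$, form the canonical coproduct
\[Q_A\;:=\;\coprod_{P\in\P,\,g\in\K(P,A)} P\]
indexed by all pairs $(P,g)$ with $P\in\P$ and $g\colon P\to A$; this is a small coproduct because $\P$ is small and $\K$ is locally small. The induced morphism $\varepsilon\colon Q_A\to A$ satisfies: for every $P\in\P$, each $g\colon P\to A$ factors through $\varepsilon$ via its own coproduct injection, so $\K(P,\varepsilon)$ is surjective. By the characterization, $\varepsilon$ is a regular epimorphism, and $Q_A$ is projective by (2) since it is a coproduct of objects of $\P\subseteq\K_p$. Finally, (4) follows directly: the ``if'' half is contained in (2), and conversely, given $Q\in\K_p$, the regular epimorphism $\varepsilon\colon Q_Q\to Q$ from (3) is split by projectivity of $Q$, exhibiting $Q$ as a retract of the coproduct $Q_Q$ of objects of $\P$.
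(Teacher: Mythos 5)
Your proof is correct and follows essentially the same route as the paper's: parts (1) and (2) are argued identically via the characterization of regular epimorphisms through $\P$ from Proposition \ref{locallyproj} and the isomorphism $\K(\coprod_i P_i,-)\cong\prod_i\K(P_i,-)$, and (3)--(4) via a projective coproduct cover that splits when the covered object is projective. The only (harmless) difference is that in (3) you build the canonical cover $\coprod_{P,g}P\to A$ explicitly and verify it is a regular epimorphism using the characterization, whereas the paper simply invokes the fact that a strong generator in a regular category yields regular epimorphic covers by coproducts of generators.
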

\begin{proof}
	$(1)$. Let $(e_i)_{i\in I}$ be a set of regular epimorphisms in $\K$; then $\K(P,\prod_i e_i)\cong \prod_i\K(P,e_i)$ is a surjection for each $P\in\P$ (since surjections are product stable in $\bo{Set}$). As a consequence $\prod_i e_i$ is a regular epimorphism in $\K$ by Proposition \ref{locallyproj}.\\
	$(2)$. Consider a coproduct $\coprod_i P_i$ of projective objects, then $\K(\coprod_i P_i,-)\cong \prod_i\K(P_i,-)$ preserves surjections because these are product stable in $\bo{Set}$. It follows that $\coprod_i P_i$ is projective. For retracts, let $i:Q\rightarrowtail P$ be a retract of some $P\in\K_p$. Consider $p:P\to Q$ such that $p\circ i=id_Q$; then given a regular epimorphism $e:A\to B$ in $\K$ and $f:Q\to B$, since $P$ is projective there is $g':P\to A$ such that $e\circ g'=f\circ p$. Define then $g:= g'\circ i$; it is easy to see that $e\circ g=f$ and hence that $Q\in\K_p$.\\
	$(3)$. Let $K$ be an object of $\K$; since $\P$ is strongly generating and $\K$ is regular, there exists a regular epimorphism $P:=\coprod_i P_i\twoheadrightarrow K$, with $P_i\in\P$. But $\K_p$ is closed under coproducts, so $P\in\K_p$.\\
	$(4)$. Let $Q\in\K_p$, then as before there is a regular epimorphism $\coprod_i P_i\twoheadrightarrow Q$ with $P_i\in\P$ for each $i$. Since $Q$ is projective this regular epimorphism splits as desired. The converse follows by $(2)$.
\end{proof}

\begin{prop}
	Let $\V$ be a symmetric monoidal closed complete and cocomplete category, and let $\K$ be any cocomplete $\V$-category for which $\K_0$ is a finitary variety (resp. quasivariety). Then for any small $\V$-category $\A$, the category $[\A,\K]_0$ of $\V$-functors from $\A$ to $\K$ is a finitary variety (resp. quasivariety).
\end{prop}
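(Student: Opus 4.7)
The plan is to exhibit an explicit small strong generator of $[\A,\K]_0$ consisting of finite projectives, and (for the variety case) to verify that all equivalence relations are effective. Since $\K$ is $\V$-cocomplete and $\A$ is small, the $\V$-functor category $[\A,\K]$ is $\V$-cocomplete with conical colimits computed pointwise, so in particular $[\A,\K]_0$ is cocomplete.

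For the generator, let $\P\subseteq\K_0$ be the given strong generator of $\K_0$ consisting of finitely presentable projectives. For each $A\in\A$ and $P\in\P$, set $G_{A,P}:=\A(A,-)\cdot P\in[\A,\K]$, the pointwise copower of $P$ by the representable $\A(A,-)$, which exists because $\K$ has all copowers. This $G_{A,P}$ is the value at $P$ of the $\V$-left adjoint to evaluation $\textnormal{ev}_A:[\A,\K]\to\K$, giving the natural bijection
$$[\A,\K]_0(G_{A,P},F)\cong\K_0(P,FA).$$
The collection $\{G_{A,P}:A\in\A,\,P\in\P\}$ is a small set, and the remaining work is to verify three properties. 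First, for \emph{strong generation}, joint conservativity of the hom-functors $[\A,\K]_0(G_{A,P},-)$ reduces via the displayed bijection to joint conservativity of the evaluations at objects of $\A$ (since limits are pointwise) combined with $\P$ being a strong generator in $\K_0$. Second, for \emph{finite presentability}, conical filtered colimits in $[\A,\K]_0$ are pointwise, so preservation by $[\A,\K]_0(G_{A,P},-)$ follows from preservation by $\K_0(P,-)$.

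Third, for \emph{projectivity}, the key intermediate claim is that a morphism $\alpha$ in $[\A,\K]_0$ is a regular epimorphism if and only if every component $\alpha_A$ is a regular epimorphism in $\K_0$. One direction is immediate since coequalizers are pointwise; for the converse, given $\alpha$ pointwise regular epi, its kernel pair in $[\A,\K]_0$ is pointwise the kernel pair in $\K_0$, and the induced coequalizer is pointwise the coequalizer of each $\alpha_A$'s kernel pair, which equals $\alpha_A$ by regularity of $\K_0$. Hence $[\A,\K]_0(G_{A,P},\alpha)\cong\K_0(P,\alpha_A)$ is surjective by projectivity of $P$, as required.

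For the variety case, I would extend the argument to effective equivalence relations: an equivalence relation $R\rightrightarrows F$ in $[\A,\K]_0$ is pointwise an equivalence relation on $FA$ in $\K_0$ (since finite limits are pointwise), and by exactness of $\K_0$ each $R_A$ arises as the kernel pair of its coequalizer $FA\twoheadrightarrow Q_A$; collecting these gives a pointwise, hence conical, coequalizer in $[\A,\K]_0$ whose kernel pair is $R$. The main subtlety I foresee is the pointwise description of regular epimorphisms and effective equivalence relations in $[\A,\K]_0$; this rests on the conical/pointwise nature of finite limits and colimits in the $\V$-functor category, and would be the point at which I would be most careful to spell out the underlying-category comparisons.
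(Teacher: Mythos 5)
Your proposal is correct and follows essentially the same route as the paper: the same generators $\A(a,-)\cdot P$, the same adjunction isomorphism $[\A,\K]_0(\A(a,-)\cdot P,F)\cong\K_0(P,FA)$ used to verify projectivity and finite presentability, and the same reliance on the pointwise computation of limits and colimits; the only cosmetic difference is that you obtain strong generation via joint conservativity of these representables, whereas the paper exhibits, for each $F$, a pointwise regular epimorphism onto $F$ from a coproduct of generators. You also explicitly check effectivity of equivalence relations for the variety case, a point the paper's proof leaves implicit.
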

\begin{proof}
	Let $\P\subseteq(\K_0)_p$ be a strong generator for $\K_0$ made of finite projective objects. Define $\P'$ in $[\A,\K]$ as the collection of those functors of the form $\A(a,-)\cdot P$ for each $a\in\A$ and $P\in\P$. These are projective since for any regular epimorphism $e$ in $[\A,\K]$ the following isomorphisms hold
	$$ [\A,\K]_0(\A(a,-)\cdot P,e)\cong[\A,\V]_0(\A(a,-),\K(P,e-))\cong \K_0(P,e_a) $$
	and the last is a regular epimorphism since $e_a$ is one and $P$ is projective. An analogous chain of isomorphisms shows that the elements of $\P'$ are finitely presentable (each $p\in\P$ is finitely presentable and evaluation at $d$ preserves all limits and colimits).\\
	It remains to prove that $\P'$ is a strong generator.
	Given $F$ in $[\A,\K]$, it's enough to prove that for any $d\in\A$ there are $P\in\P$ and $\eta:\A(d,-)\cdot P\to F$ such that $\eta_d$ is a regular epimorphism; because then we can just take the coproduct of those maps over $d\in\A$. Since $\K_0$ is locally projective, given $d$ there are $P\in\P$ and a regular epimorphism $f:P\twoheadrightarrow Fd$, define then $\eta$ as the natural transformation whose transpose $\bar{\eta}:\A(d,-)\to\K(P,F-)$ corresponds, through Yoneda, to $f$. Consider then the following diagram
	\begin{center}
		\begin{tikzpicture}[baseline=(current  bounding  box.south), scale=2]
		
		\node (a) at (0.0,0.8) {$I\cdot P$};
		\node (b) at (1.5,0.1) {$Fd$};
		\node (c) at (0.0,0.1) {$\A(d,d)\cdot P$};
		
		\path[font=\scriptsize]

		(a) edge[->>] node [above] {$f$} (b)
		(a) edge[->] node [left] {$id\cdot P$} (c)
		(c) edge[->] node [below] {$\eta_d$} (b);
		
		\end{tikzpicture}
	\end{center}
	since $\K_0$ is regular (by Proposition \ref{locallyproj}) and $f$ a regular epimorphism, $\eta_d$ is one too.
\end{proof}

\begin{es}
	It follows that, for each commutative ring $R$, the categories $R\text{-Mod}$ and GR-$R\text{-Mod}$ of $R$-modules and of $\mathbb{Z}$-graded $R$-modules, are finitary varieties. Moreover if $\A$ is abelian and a finitary variety (resp. quasivariety) then so is the category $\text{Ch}(\A)$ of chain complexes on $\A$.
\end{es}

With a similar approach to that of \cite{Kel82:articolo} for \lfp\ $\V$-categories, we define:

\begin{Def}\label{smfv}
	Let $\V=(\V_0,\otimes,I)$ be a symmetric monoidal closed category. We say that $\V$ is a {\em symmetric monoidal \fqv\ }if:\begin{enumerate}
		 
		\item $\V_0$ is a \fqv\ with strong generator $\P\subseteq(\V_{0})_{pf}$;
		\item $I\in(\V_{0})_f$;
		\item if $P,Q\in\P$ then $P\otimes Q\in(\V_{0})_{pf}$.
	\end{enumerate}
	We call it a {\em symmetric monoidal finitary variety} if $\V_0$ is also a finitary variety.
\end{Def}

In view of Remark \ref{0p=p0} and Propositions \ref{classsymmonfv} and \ref{I+closeddef}, one might think it would be reasonable to assume the unit $I$ to be projective in $\V_0$. However this is not needed to prove the main theorems of the paper, and there are significant examples of symmetric monoidal finitary varieties not satisfying the property (like chain complexes).

\begin{ese} The following are examples of symmetric monoidal finitary quasivarieties:
	\begin{enumerate}[(i)]
		
		\item $\bo{Set}$ and $\bo{Ab}$ with the cartesian and group tensor product respectively;
		\item $R$-Mod and GR-$R$-Mod, for each commutative ring $R$, with the usual algebraic tensor product;
		\item $[\C^{op},\bo{Set}]$, for any category $\C$ with finite products, equipped with the cartesian product; 
		\item the category $\bo{Set}_*$ of pointed sets with the smash product;
		\item the category $\bo{Set}^G$ of $G$-sets for a finite group $G$ with the cartesian product;
		\item the category $\bo{Gra}$ of directed graphs with the cartesian product;
		\item $\text{Ch}(\A)$ for each abelian and symmetric monoidal finitary quasivariety $\A$, with the tensor product inherited from $\A$;
		\item the category $\bo{Ab}_{tf}$ of torsion free abelian groups with the usual tensor product; 
		\item $\bo{BRel}$ with the cartesian product;
		\item the full subcategory $\bo{Mono}$ of all monomorphisms in $\bo{Set}^{\mathbbm{2}}$.
	\end{enumerate}
	The first four are always symmetric monoidal finitary varieties with projective units. Examples (v) and (vi) are also symmetric monoidal finitary varieties but the unit is not projective (except in (v) if $G$ is trivial). Example (vii) is a symmetric monoidal finitary variety if $\A$ is one, but once again the unit is generally not projective. The remaining examples are not symmetric monoidal finitary varieties; the unit is projective in (x). Non-examples are: $\bo{Cat}$ with any tensor product (since it is not a quasivariety); the categories $\bo{RGra}$ of reflexive graphs, and $\bo{sSet}$ of simplicial sets with the cartesian product (since the product of two projective objects may not be projective).  
\end{ese}

\begin{obs}
	Let $\V$ be a symmetric monoidal \fqv; then point $(3)$ of the previous definition implies, by \cite[Proposition~5.2]{Kel82:articolo}, that $\V_{0f}$ is closed under tensor product. The same holds for $\V_{0p}$: given two projective objects $P,Q\in\V_{0p}$, there are split monomorphisms $P\rightarrowtail \coprod_i P_i$ and $Q\rightarrowtail\coprod_j Q_j$, with $P_i,Q_j\in\P$. Then $P\otimes Q$ is a split subobject of $$\coprod_i P_i\otimes\coprod_i Q_i\cong\coprod_{i,j} (P_i\otimes Q_j),$$ which is projective; hence $P\otimes Q$ is projective. It follows then that $(\V_{0})_{pf}$ is also closed under tensor product (but may not contain the unit).
\end{obs}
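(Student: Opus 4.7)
The plan is to establish closure of $\V_{0f}$, $\V_{0p}$, and finally $(\V_{0})_{pf}$ under tensor product in three short steps, using tools already in place: Kelly's finitely-presentable closure result, the retract characterization of projectives from Proposition \ref{enoughproj}, and the cocontinuity of $\otimes$ inherited from closedness of $\V$.

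For the finitely presentable case I would apply \cite[Proposition~5.2]{Kel82:articolo} directly. Conditions (2) and (3) of Definition \ref{smfv} give, respectively, that $I\in(\V_0)_f$ and that tensors of elements of the generator $\P$ remain finitely presentable (in fact they even land in $(\V_0)_{pf}$). These are precisely the hypotheses needed to conclude that all of $(\V_0)_f$ is closed under $\otimes$.

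For projectives, the idea is to reduce to the generators. Fix $P,Q\in\V_{0p}$. Proposition \ref{enoughproj}(4) realizes $P$ and $Q$ as retracts of coproducts $\coprod_i P_i$ and $\coprod_j Q_j$ of elements of $\P$. Functoriality of $\otimes$ in each variable turns these two retractions into a retraction exhibiting $P\otimes Q$ as a retract of $(\coprod_i P_i)\otimes(\coprod_j Q_j)$, which by cocontinuity of $\otimes$ (available because $\V$ is monoidal closed) simplifies to $\coprod_{i,j}P_i\otimes Q_j$. Each summand lies in $(\V_0)_{pf}$ by condition (3) of Definition \ref{smfv}, so is in particular projective, and Proposition \ref{enoughproj}(2) (closure of projectives under small coproducts and retracts) then yields $P\otimes Q\in\V_{0p}$. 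Intersecting the two closures gives closure of $(\V_0)_{pf}=\V_{0f}\cap\V_{0p}$.

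The main point deserving care is the reduction-by-retract step: it uses both that $\otimes$ preserves split monomorphisms (immediate from bifunctoriality) and that $\otimes$ distributes over the relevant coproducts (not automatic for a bare monoidal category, but available here because each $(-)\otimes X$ is a left adjoint). Everything else is bookkeeping. The parenthetical warning that the unit may fail to lie in $(\V_0)_{pf}$ simply reflects the fact that Definition \ref{smfv} requires $I$ to be finitely presentable but not projective.
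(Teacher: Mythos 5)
Your proposal is correct and follows essentially the same route as the paper: Kelly's Proposition 5.2 for the finitely presentable part, and the retract-of-a-coproduct-of-generators argument (via Proposition \ref{enoughproj} and the distributivity of $\otimes$ over coproducts) for the projective part, with the $(\V_0)_{pf}$ case obtained by intersection. The extra care you take in justifying that $\otimes$ preserves split monomorphisms and distributes over coproducts is left implicit in the paper but is exactly what is being used there.
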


The following proposition gives a characterization of the monoidal structures on a finitary variety that make it a symmetric monoidal finitary variety, assuming some additional conditions.

\begin{prop}\label{classsymmonfv}
	Let $\C$ be a Cauchy complete category with finite products; there is an equivalence between \begin{itemize}
		 
		\item symmetric monoidal structures on $\C$ for which $-\otimes -:\C\times\C\to\C$ preserves finite products in each variable;
		\item symmetric monoidal structures on $\textnormal{FP}(\C,\bo{Set})$ which make it a symmetric monoidal finitary variety with projective unit.  	
	\end{itemize}
	Moreover, the induced structures make the Yoneda embedding $Y:\C^{op}\to\textnormal{FP}(\C,\bo{Set})$ a strong monoidal functor.
\end{prop}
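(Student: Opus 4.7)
The plan is to build functors in both directions, using the Yoneda embedding as the bridge. Since $\C$ is Cauchy complete with finite products, Yoneda $Y:\C^{op}\to\textnormal{FP}(\C,\bo{Set})$ identifies $\C^{op}$ with $(\textnormal{FP}(\C,\bo{Set}))_{pf}$, and $\textnormal{FP}(\C,\bo{Set})$ may be viewed as the free sifted-colimit cocompletion of $\C^{op}$; in particular, a cocontinuous functor out of $\textnormal{FP}(\C,\bo{Set})$ is determined up to isomorphism by its restriction to $\C^{op}$, and such a restriction extends cocontinuously precisely when it preserves the finite coproducts already present in $\C^{op}$.

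For the direction $(\Rightarrow)$, given $(\otimes_\C,I_\C)$ on $\C$ whose tensor preserves finite products in each variable, $\otimes_{\C^{op}}$ preserves finite coproducts in each variable. Applying the universal property above in each variable separately, I extend $\otimes_{\C^{op}}$ to a bifunctor $\otimes:\textnormal{FP}(\C,\bo{Set})\times\textnormal{FP}(\C,\bo{Set})\to\textnormal{FP}(\C,\bo{Set})$ that is cocontinuous in each variable; the associator, unitor, and symmetry transport uniquely in the same way, and the unit is $Y(I_\C)=\C(I_\C,-)$. Cocontinuity in each variable yields right adjoints via the adjoint functor theorem for locally presentable categories, so $\textnormal{FP}(\C,\bo{Set})$ becomes symmetric monoidal closed. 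Since $Y(I_\C)$ is representable, it is finitely presentable and projective; and since $Y(a)\otimes Y(b)\cong Y(a\otimes_\C b)$ is also representable, finite projectives are closed under tensor, so we obtain a symmetric monoidal finitary variety with projective unit. By construction, $Y$ is strong monoidal.

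For the direction $(\Leftarrow)$, suppose $\textnormal{FP}(\C,\bo{Set})$ carries a symmetric monoidal closed structure making it a symmetric monoidal finitary variety with projective unit. By Remark \ref{0p=p0}, $(\textnormal{FP}(\C,\bo{Set}))_{pf}$ is closed under $\otimes$; and since the unit $I$ is both projective (by hypothesis) and finitely presentable (by Definition \ref{smfv}(2)), $I$ itself lies in $(\textnormal{FP}(\C,\bo{Set}))_{pf}$. Transporting along the equivalence $\C^{op}\simeq(\textnormal{FP}(\C,\bo{Set}))_{pf}$ then yields a symmetric monoidal structure on $\C$. Monoidal closedness makes $\otimes$ cocontinuous in each variable on $\textnormal{FP}(\C,\bo{Set})$; since $(\textnormal{FP}(\C,\bo{Set}))_{pf}$ is closed in $\textnormal{FP}(\C,\bo{Set})$ under finite coproducts (by Proposition \ref{enoughproj}(2), together with closure of finitely presentables under finite colimits), restricting shows that $\otimes_{\C^{op}}$ preserves finite coproducts in each variable, equivalently that $\otimes_\C$ preserves finite products in each variable. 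The Yoneda embedding is strong monoidal by the very construction of the restricted tensor.

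The two constructions are mutually inverse: restricting the extension along $Y$ gives back the original $\otimes_\C$ since $Y(a)\otimes Y(b)\cong Y(a\otimes_\C b)$ by construction, and extending a restriction cocontinuously gives back the original tensor on $\textnormal{FP}(\C,\bo{Set})$ by the uniqueness clause of the universal property. The main obstacle I anticipate is verifying, in the forward direction, that the extended tensor on $\textnormal{FP}(\C,\bo{Set})$ is genuinely monoidal closed; this is precisely where the finite-product preservation hypothesis is essential, as it is what upgrades the sifted-cocontinuous extension of $\otimes_{\C^{op}}$ to a fully cocontinuous bifunctor, producing the right adjoints in each variable.
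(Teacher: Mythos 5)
Your argument is correct, and the backward direction coincides with the paper's (restrict along $\C^{op}\simeq\textnormal{FP}(\C,\bo{Set})_{pf}$, using closure of finite projectives under $\otimes$ and the hypothesis that the unit is projective). The forward direction, however, is organised differently. The paper invokes Day convolution on the full presheaf category $[\C,\bo{Set}]$ and then checks by hand that both the tensor and the internal hom restrict to $\textnormal{FP}(\C,\bo{Set})$: the tensor because a convolution of two sifted colimits of representables is again a sifted colimit of representables (using that sifted colimits commute with finite products and coends in $\bo{Set}$), and the hom by the explicit computation $[Y(c),F]\cong F(-\otimes c)$. You instead build the tensor directly from the universal property of $\textnormal{FP}(\C,\bo{Set})$ as the free cocompletion of $\C^{op}$ in which finite-coproduct-preserving functors extend cocontinuously, and you obtain closedness abstractly from the adjoint functor theorem for locally presentable categories. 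Both routes are sound; yours is shorter on the closedness step but yields the internal hom only abstractly, whereas the paper's computation identifies it concretely as the hom inherited from $[\C,\bo{Set}]$ (and makes visible the relation to Day's reflection theorem). Your version also makes explicit two points the paper leaves implicit: that the two constructions are mutually inverse, and that the transport of the coherence data is forced by uniqueness of cocontinuous extensions. The one place you should be slightly more careful is the assertion that the separately cocontinuous extension of $\otimes_{\C^{op}}$ exists as a single bifunctor: extending first in one variable and then in the other requires checking that $X\otimes Y(-)$ still preserves the finite coproducts of $\C^{op}$ for a general $X$ (which follows by writing $X$ as a sifted colimit of representables and interchanging colimits), but this is routine and does not affect the validity of the argument.
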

\begin{proof}
	On one side, since $\C$ is Cauchy complete, it follows that $\textnormal{FP}(\C,\bo{Set})_{pf}\simeq\C^{op}$ (finite projectives are split subobjects of representables); then the remark above implies that every symmetric monoidal structure on $\textnormal{FP}(\C,\bo{Set})$ which makes it a symmetric monoidal finitary variety with projective unit, restricts to a symmetric monoidal structure on $\C$. The functor $-\otimes -:\C\times\C\to\C$ preserves finite products in each variable since this is true in $\textnormal{FP}(\C,\bo{Set})$.
	
	On the other side, let $(\C,\otimes,I)$ be a symmetric monoidal structure on $\C$ as in the first point. It is proven in \cite{Day70:articolo} that it induces a symmetric monoidal closed structure on $[\C,\bo{Set}]$ for which the Yoneda embedding $\ \C^{op}\to[\C,\bo{Set}]$ is strong monoidal and for every $F,G:\C\to\bo{Set}$ and $c\in\C$
	$$ (F\otimes G) (c)\cong\int^{c_1,c_2\in\C}\C(c_1\otimes c_2,c)\times F(c_1)\times F(c_2)$$ 
	can be expressed as a coend.
	Now, if $F$ and $G$ preserve finite products, by \cite[Corollary~2.8]{AR11:articolo}, we can write them as sifted colimits of representables: $F\cong\text{colim}_iY(c_i)$ and $G\cong\text{colim}_jY(d_j)$. Since sifted colimits commute in $\bo{Set}$ with finite products and coends, it follows that $$F\otimes G\cong\text{colim}_{i}Y(c_i)\otimes \text{colim}_{j}Y(d_j)\cong\text{colim}_{i,j}Y(c_i\otimes d_j),$$ making $F\otimes G$ a sifted colimit of representables and hence a finite product preserving functor. As a consequence the tensor product on $[\C,\bo{Set}]$ restricts to $\textnormal{FP}(\C,\bo{Set})$, and satisfies conditions $(2)$ and $(3)$ of Definition \ref{smfv} (with $\P=Y\C^{op}$); we are only left to prove that the symmetric monoidal structure induced on $\textnormal{FP}(\C,\bo{Set})$ is closed. For this it's enough to show that if $F$ preserves finite products and $G$ is any functor, then the internal hom $[G,F]$ (seen in $[\C,\bo{Set}]$) preserves finite products too. Write $G\cong\text{colim}_jY(d_j)$ as a colimit of representables; then $[G,F]\cong[\text{colim}_jY(d_j),F]\cong\text{lim}_j[Y(d_j),F]$ and it suffices to show that $[Y(c),F]$ preserves finite products for every $c\in\C$. Fix $d\in\C$, then
	\begin{equation*}
	\begin{split}
	[Y(c),F](d)&\cong [\C,\bo{Set}](Y(d),[Y(c),F])\\
	&\cong [\C,\bo{Set}](Y(d)\otimes Y(c),F)\\
	&\cong [\C,\bo{Set}](Y(d\otimes c),F)\\
	&\cong F(d\otimes c);
	\end{split}
	\end{equation*}
	in other words $[Y(c),F]\cong F(-\otimes c)$, and this preserves finite products since $F$ does and $-\otimes -:\C\times\C\to\C$ preserves finite products in each variable by assumption (Observe that the tensor product on $\textnormal{FP}(\C,\bo{Set})$ coincides with that induced by Day's reflection \cite{Day72:articolo}).
\end{proof}

\begin{obs}\label{0p=p0}
	Note that if $\V$ is a symmetric monoidal \fqv\ it is in particular \lfp\ as a closed category (in the sense of \cite{Kel82:articolo}), and hence $\V_{0f}=\V_{f0}$. We can show similar results for the full subcategory of projectives. Denote with $\V_p$ the full subcategory of $\V$ given by the $\V$-projective objects: those $P\in\V$ such that $[P,-]:\V\to\V$ preserves regular epimorphisms. Then the inclusion $\V_{0p}\subseteq\V_{p0}$ holds; indeed given $P\in\V_{0p}$ and a regular epimorphism $e$, the function of sets $\V_0(Q,[P,e])\cong\V_0(Q\otimes P,e)$ is a surjection for each $Q\in\P$ (since $Q\otimes P$ is projective); hence $[P,e]$ is a regular epimorphism (by Proposition \ref{locallyproj}) and $P\in\V_{p0}$ (this means exactly that regular epimorphisms are stable in $\V$ under projective powers). The inclusion $\V_{p0}\subseteq\V_{0p}$ holds if and only if $I\in\V_{0p}$; indeed if  $I\in\V_{0p}$, given any $P\in\V_{p0}$, the functor $\V_0(P,-)\cong\V_0(I,[P,-])$ preserves regular epimorphisms; thence $P\in\V_{0p}$. Vice versa, if $\V_{0p}=\V_{p0}$, since $I\in\V_{p0}$ always ($[I,-]\cong id$), it follows that $I$ is projective in the ordinary sense.
\end{obs}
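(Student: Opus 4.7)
The Remark makes three claims: (i) $\V_{0f}=\V_{f0}$, (ii) $\V_{0p}\subseteq\V_{p0}$ always, and (iii) $\V_{p0}\subseteq\V_{0p}$ if and only if $I\in\V_{0p}$. My plan is to handle them in order, throughout using the tensor-hom adjunction together with Proposition \ref{locallyproj}, which detects regular epimorphisms in $\V_0$ via hom-sets out of the strong generator $\P$.

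For (i), I would verify Kelly's three conditions for $\V$ being \lfp\ as a closed category: $\V_0$ is \lfp, the unit $I$ lies in $(\V_0)_f$, and the tensor of two finitely presentable objects is finitely presentable. The first two are immediate from the definition of a symmetric monoidal \fqv\ and the cocompleteness of $\V_0$; the third I would deduce from \cite[Prop.~5.2]{Kel82:articolo} applied to $\P$, since by Definition \ref{smfv}(3) one has $\P\otimes\P\subseteq(\V_0)_{pf}\subseteq(\V_0)_f$. The equality $\V_{0f}=\V_{f0}$ is then part of Kelly's general theory of \lfp\ closed categories.

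For (ii), fix $P\in\V_{0p}$ and a regular epimorphism $e$ in $\V_0$. I want $[P,e]$ to be a regular epimorphism, which by Proposition \ref{locallyproj} reduces to checking that $\V_0(Q,[P,e])$ is surjective for every $Q\in\P$. Transposing via the tensor-hom adjunction turns this into $\V_0(Q\otimes P,e)$, and since $Q\otimes P$ is projective in $\V_0$ (projectives being closed under tensor, as shown in the preceding remark) the surjection is automatic, giving $P\in\V_{p0}$. For (iii), the natural isomorphisms $\V_0(P,-)\cong\V_0(I,[P,-])$ and $[I,-]\cong\mathrm{id}_\V$ do all the work: the first shows that if $I\in\V_{0p}$ and $P\in\V_{p0}$ then $\V_0(P,-)$ is the composite of two functors preserving regular epimorphisms, hence $P\in\V_{0p}$; the second puts $I$ unconditionally in $\V_{p0}$, so any inclusion $\V_{p0}\subseteq\V_{0p}$ forces $I\in\V_{0p}$.

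The whole argument is essentially a chain of adjointness manipulations, so I do not expect a serious obstacle. The only delicate point is step (i), where invoking \cite[Prop.~5.2]{Kel82:articolo} requires the tensor-closure of $\P$ baked into the definition of a symmetric monoidal \fqv; past that, each remaining step reduces to rewriting hom-sets through adjunction and reading off projectivity via Proposition \ref{locallyproj}.
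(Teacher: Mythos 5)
Your proposal is correct and follows essentially the same route as the paper: parts (ii) and (iii) are verbatim the paper's adjunction argument (detecting regular epimorphisms via $\V_0(Q,-)$ for $Q\in\P$ using Proposition \ref{locallyproj}, the projectivity of $Q\otimes P$ from the preceding remark, and the isomorphisms $\V_0(P,-)\cong\V_0(I,[P,-])$ and $[I,-]\cong\mathrm{id}$). The only difference is that you spell out the verification that $\V$ is \lfp\ as a closed category via \cite[Prop.~5.2]{Kel82:articolo}, which the paper simply asserts; this is a harmless and accurate elaboration.
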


\section{Regular $\V$-Categories}\label{regcat}

From now on we assume that our base category $\V$ is a symmetric monoidal \fqv\ with strong generator $\P\subseteq (\V_0)_{pf}$ made of finite projective objects.

The following is the notion of regular $\V$-category we are going to consider in this context:

\begin{Def}
	A $\V$-category $\C$ is said to be {\em regular} if it has all finite weighted limits (equivalently finite conical limits and finite powers), coequalizers of kernel pairs, and is such that regular epimorphisms are stable under pullback and closed under powers by elements of $\P$.\\ 
	A $\V$-functor $F:\C\to\D$ between regular $\V$-categories is called {\em regular} if it preserves finite weighted limits and regular epimorphisms; we denote by $\textnormal{Reg}(\C,\D)$ the $\V$-category of regular functors from $\C$ to $\D$. 
\end{Def}

We define $\V$-$\bo{Reg}$ to be the 2-category of small regular $\V$-categories, regular $\V$-functors, and $\V$-natural transformations.

\begin{obs}\label{Chi}
	A different notion of regularity appeared before in \cite{Chi11:articolo}; there, in a regular $\V$-category, regular epimorphisms need to be stable under all finite powers, instead of just finite projective ones like in our case. At the same time the base for enrichment can be assumed to be only \lfp\ as a closed category, and one can still prove the analogue of \ref{Barr}. We chose to consider a different approach to recover the usual notions of regularity and exactness for $\V=\bo{Ab}$; in fact $\bo{Ab}$ itself is not regular as an additive category in the sense of \cite{Chi11:articolo}, but it is regular in our sense.
\end{obs}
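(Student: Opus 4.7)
The plan is to verify the two concrete assertions buried in the remark: that $\bo{Ab}$ does satisfy the present definition of a regular $\V$-category when $\V=\bo{Ab}$, but fails the stronger definition from \cite{Chi11:articolo}.

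First I would fix the enrichment data. $\bo{Ab}$ is closed symmetric monoidal under the usual tensor and internal hom, and by the discussion in Section~\ref{fqv} one may take the strong generator to be $\P=\{\mathbb{Z}\}$. For the positive claim I would check the axioms of regularity in turn: finite conical limits exist because $\bo{Ab}$ is bicomplete; finite powers are the internal homs $A^X=[X,A]=\mathrm{Hom}(X,A)$ with pointwise group structure; coequalizers of kernel pairs exist because $\bo{Ab}$ is abelian; regular epimorphisms coincide with surjections and are pullback stable by an elementary diagram chase. The remaining axiom is stability of surjections under powers by elements of $\P$, which is automatic since $[\mathbb{Z},-]\cong\mathrm{id}_{\bo{Ab}}$.

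For the negative claim I would exhibit a finitely presentable $X$ and a regular epimorphism $f$ such that $[X,f]$ is not surjective. The natural candidate is $X=\mathbb{Z}/2$ (finitely presented as $\mathrm{coker}(2\colon\mathbb{Z}\to\mathbb{Z})$) together with the quotient $q\colon\mathbb{Z}\twoheadrightarrow\mathbb{Z}/2$. A direct computation gives $[X,\mathbb{Z}]=\mathrm{Hom}(\mathbb{Z}/2,\mathbb{Z})=0$ whereas $[X,\mathbb{Z}/2]=\mathbb{Z}/2\neq 0$, so $[X,q]$ is the zero map into a nonzero group and cannot be surjective. This witnesses the failure of Chivers' stability condition on arbitrary finite powers.

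There is no serious obstacle here; the essential point is conceptual. Enriched powers in $\bo{Ab}$ are just internal homs, so stability of regular epimorphisms under the $X$-power is equivalent to $\mathrm{Hom}(X,-)$ preserving surjections, i.e.\ to $X$ being projective. Since $\mathbb{Z}/2$ is finitely presentable but not projective, the two definitions genuinely diverge on this example, and restricting to projective powers (those generated by $\mathbb{Z}$) is exactly what is needed to recover the classical notion of a regular abelian category.
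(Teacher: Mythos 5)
Your verification is correct, and it supplies exactly the content that the paper leaves implicit: the remark is stated without proof, and the two claims it makes (that $\bo{Ab}$ is regular in the paper's sense, and not regular in the sense of \cite{Chi11:articolo}) are precisely what you check. The witness $q\colon\mathbb{Z}\twoheadrightarrow\mathbb{Z}/2$ with the finitely presentable, non-projective power object $\mathbb{Z}/2$, giving $[\mathbb{Z}/2,q]\colon 0\to\mathbb{Z}/2$, is the standard (and essentially unique natural) counterexample, and your closing observation that stability under the $X$-power amounts to projectivity of $X$ is the right conceptual summary. One trivial slip: the author of the cited paper is Chikhladze, not ``Chivers.''
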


It follows from the definition that a $\V$-category $\C$ is regular if and only if it has all finite weighted limits, $\C_0$ is an ordinary regular category, and regular epimorphisms are stable under powers with elements of $\P$. Indeed, this is easily checked to be necessary; on the other hand it is sufficient because, since $\C$ has $\P$-powers and $\P$ is a strong generator, coequalizers of kernel pairs in $\C$ and $\C_0$ coincide.

\begin{obs}
	$\V$ itself is regular as a $\V$-category since it is both complete and cocomplete, $\V_0$ is regular in the ordinary sense by Proposition \ref{locallyproj}, and regular epimorphisms are stable under all projective powers (by Remark \ref{0p=p0}).
\end{obs}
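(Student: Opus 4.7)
The plan is to apply the characterization of regularity stated just above the remark: a $\V$-category $\C$ is regular if and only if it has all finite weighted limits, $\C_0$ is an ordinary regular category, and regular epimorphisms are stable under powers by objects of $\P$. So I need only check these three conditions for $\C = \V$.

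First, $\V$ is by standing assumption complete and cocomplete as an ordinary category, and symmetric monoidal closed. Any symmetric monoidal closed complete category is complete as an enriched category over itself: conical limits in $\V_0$ give conical limits in $\V$ because each representable $\V(X,-)_0 \cong \V_0(I, [X,-])$ preserves them (it is a composite of right adjoints), and powers in $\V$ are computed by the internal hom, namely $C^X = [X,C]$. In particular $\V$ has all finite weighted limits. Next, $\V_0$ is a \fqv\ by hypothesis, so Proposition \ref{locallyproj} applies to give that $\V_0$ is an ordinary regular category.

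The only condition requiring a moment's thought is closure of regular epimorphisms under powers by $P \in \P$. Since the power of $C$ by $P$ is the internal hom $[P,C]$, I need to show that $[P,e]$ is a regular epimorphism in $\V_0$ whenever $e$ is. This is precisely the content of the inclusion $\V_{0p} \subseteq \V_{p0}$ established in Remark \ref{0p=p0}: an object which is projective in the ordinary sense is also $\V$-projective, meaning $[P,-]$ preserves regular epimorphisms. Since $\P \subseteq (\V_0)_{pf} \subseteq \V_{0p}$, the condition is satisfied for every $P \in \P$.

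Assembling these three verifications concludes the proof. The only potential obstacle, which turns out not to be one, is the passage from ordinary to enriched projectivity — one could worry that being ordinarily projective is strictly weaker than having $[P,-]$ preserve regular epimorphisms, but the tensor-adjunction argument in Remark \ref{0p=p0} (using that $Q \otimes P$ remains finite projective for $Q \in \P$, which is axiom (3) in Definition \ref{smfv}) already settles this.
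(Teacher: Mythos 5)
Your proposal is correct and follows the same route as the paper: the remark is justified exactly by the characterization of regularity preceding it, with completeness supplying the finite weighted limits, Proposition \ref{locallyproj} supplying ordinary regularity of $\V_0$, and the inclusion $\V_{0p}\subseteq\V_{p0}$ from Remark \ref{0p=p0} supplying stability of regular epimorphisms under powers by $P\in\P$. The extra detail you give on why conical limits in $\V_0$ are enriched limits and on identifying powers with internal homs is a harmless elaboration of what the paper leaves implicit.
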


The condition on the stability of regular epimorphisms under powers from $\P$ is additional to the usual ordinary notion of regularity, but it must be included if we want to regularly embed each regular $\V$-category in a $\V$-category of presheaves. In fact, regular epimorphisms are stable under powers from $\P$ in any $\V$-category of the form $[\C,\V]$, since this is true in $\V$, and hence the same holds for any full subcategory of $[\C,\V]$ closed under finite weighted limits and coequalizers of kernel pairs.

\begin{obs}\label{reg/lex}
	Our notion of regular $\V$-category is a particular case of what are called $\Phi$-exact $\V$-categories in \cite{GL12:articolo}, $\Phi$ being a class of weights. More precisely, there is a suitable choice of $\Phi$ for which a $\V$-category is regular if and only if it is $\Phi$-exact; this follows by our embedding Theorem \ref{Barr} and \cite[Theorem~4.1]{GL12:articolo}. As a consequence, by Corollary 3.7 of the same paper, it follows that every finitely complete $\V$-category $\C$ has a free regular completion $\C_{\text{reg/lex}}$; meaning that there is a lex functor $F:\C\to\C_{\text{reg/lex}}$ which induces an equivalence $\text{Reg}(\C_{\text{reg/lex}},\B)\simeq\text{Lex}(\C,\B)$ for each regular $\V$-category $\B$. See also Remark \ref{reexreg} and Section \ref{free}.
\end{obs}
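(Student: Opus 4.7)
The plan is to identify a class $\Phi$ of weights such that $\Phi$-exactness in the sense of \cite{GL12:articolo} coincides with our notion of regularity, and then apply the general machinery of that paper. The remark contains two distinct assertions---that regular equals $\Phi$-exact for a suitable $\Phi$, and that a free regular completion of finitely complete $\V$-categories consequently exists---and I would address them in that order.

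First I would fix $\Phi$ by collecting the weights encoding finite limits, coequalizers of kernel pairs, and $\P$-powers, together with the data needed to force stability of regular epimorphisms under pullback and under $\P$-powers. One direction of the equivalence is then almost immediate: unwinding the definition of $\Phi$-exactness shows that any $\Phi$-exact $\V$-category has the relevant finite weighted limits, the kernel-pair coequalizers, and the required stability, hence is regular in our sense.

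The converse direction is the substantive one and I would handle it via the embedding Theorem \ref{Barr}: for a small regular $\V$-category $\C$, the evaluation $\C\to[\textnormal{Reg}(\C,\V),\V]$ is a fully faithful regular embedding, and its codomain is $\Phi$-exact as a power of $\V$ (which is itself $\Phi$-exact by Proposition \ref{locallyproj} together with Remark \ref{0p=p0}). Since regular functors preserve the weights in $\Phi$, transporting $\Phi$-exactness back along this embedding via \cite[Theorem~4.1]{GL12:articolo} yields $\Phi$-exactness of $\C$. Once the equivalence is established, the existence of $\C_{\text{reg/lex}}$ with the stated universal property for any finitely complete $\V$-category $\C$ follows directly from \cite[Corollary~3.7]{GL12:articolo} applied to this $\Phi$.

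The main obstacle I foresee is verifying that the chosen class $\Phi$ is genuinely saturated in the sense required by \cite{GL12:articolo}, and in particular that the stability condition under $\P$-powers---which is specific to our enriched setting and has no counterpart in the ordinary Barr theory---can be encoded inside $\Phi$-flatness rather than imposed as a separate ad hoc axiom. Once $\Phi$ is pinned down correctly, checking that presheaf $\V$-categories $[\C,\V]$ are $\Phi$-exact reduces to the pointwise transfer of stability properties from $\V$, and the appeal to the Garner-Lack completion theorem becomes purely formal.
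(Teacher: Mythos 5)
Your proposal is correct and follows essentially the same route as the paper, whose entire justification for this remark is precisely the combination you describe: one direction by unwinding $\Phi$-exactness, the converse by transporting it back along the regular embedding of Theorem \ref{Barr} (or, to stay within a genuine presheaf $\V$-category $[\A,\V]$ with $\A$ small, Theorem \ref{functembedd}) via \cite[Theorem~4.1]{GL12:articolo}, and then \cite[Corollary~3.7]{GL12:articolo} for the free completion. The one adjustment I would make is that $\Phi$ should consist solely of (lex-)colimit weights --- essentially the weight for coequalizers of kernel pairs --- since the finite limits, the $\P$-powers, and the two stability conditions are not extra data placed into $\Phi$ but consequences of $\Phi$-exactness (i.e.\ of those colimits being computed as in $[\A,\V]$), which is exactly what the embedding argument delivers.
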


The following result follows from the $\bo{Set}$-case since each regular $\V$-category has an underlying ordinary regular category.

\begin{prop}[\cite{BGO71:libro}]
	Let $\C$ be a regular $\V$-category; then:\begin{enumerate}
		 
		\item each morphism $f$ in $\C$ can be factored as a regular epimorphism followed by a monomorphism; the factorization is unique up to unique isomorphism;
		\item regular and strong epimorphisms coincide in $\C$;
		\item if $f$ and $g$ are regular epimorphisms then $f\circ g$ is too;
		\item if $f=g\circ h$ is a regular epimorphism, then $g$ is too;
		\item regular epimorphisms are stable under finite products.
	\end{enumerate}
\end{prop}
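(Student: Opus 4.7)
The plan is to reduce each of the five items to the corresponding classical fact about ordinary regular categories, which is exactly the strategy suggested by the hint before the statement and is available in \cite{BGO71:libro}. The reduction will be legitimate once we verify that each of the enriched notions appearing in the statement coincides with its ordinary counterpart when computed in $\C_0$. First, since $\C$ has $\P$-powers and $\P$ is a strong generator for $\V_0$, conical coequalizers (and in particular coequalizers of kernel pairs) computed in $\C$ agree with coequalizers in $\C_0$; so regular epimorphisms in $\C$ are precisely the regular epimorphisms in $\C_0$. Second, kernel pairs and pullbacks in $\C$ are conical finite limits and hence agree with those in $\C_0$, so monomorphisms in $\C$ and in $\C_0$ coincide (a map is mono iff its kernel pair is the identity). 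Likewise, finite products in $\C$ are computed in $\C_0$. Strong epimorphisms, being defined by orthogonality against the monomorphisms just identified, also match between the two levels.

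With these correspondences in place, each claim is immediate. For (1), the (regular epi, mono) factorization of $f$ exists uniquely in $\C_0$ by the ordinary theorem, and by the correspondences above it is the required factorization in $\C$; uniqueness up to unique isomorphism is the standard diagonal-fill argument using the lifting property of strong epis against monos, applied in $\C_0$. Item (2) is the ordinary equality of strong and regular epimorphisms in a regular category, transported back via the correspondences. Items (3) and (4) are the standard composition and right-cancellation properties of regular epimorphisms in an ordinary regular category. For (5), the binary product of morphisms in $\C$ is computed in $\C_0$ (finite products being conical), and stability under finite products in $\C_0$ is the usual consequence of (1), pullback stability, and (3)-(4).

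I do not anticipate any real obstacle: the paragraph immediately following the definition of regular $\V$-category — which characterizes a regular $\V$-category as one with finite weighted limits whose underlying ordinary category is regular and in which regular epis are stable under $\P$-powers — is exactly tailored to make reductions of this sort routine. The only point that benefits from a brief check is the matching of enriched monomorphisms with ordinary ones on $\C_0$, and this follows immediately from the fact that kernel pairs in $\C$ agree with kernel pairs in $\C_0$.
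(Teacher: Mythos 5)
Your proposal is correct and follows essentially the same route as the paper, which simply observes that the result follows from the $\bo{Set}$-case because each regular $\V$-category has an underlying ordinary regular category. The verifications you supply (conical coequalizers of kernel pairs agreeing with those in $\C_0$ via $\P$-powers and the strong generation of $\P$, and the matching of monomorphisms, products, and strong epimorphisms) are exactly the points the paper relies on implicitly, having already noted them in the paragraph following the definition of regular $\V$-category.
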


With the next proposition we show that the definition of regularity does not depend on the chosen strong generator $\P$ in $(\V_0)_{pf}$:

\begin{prop}
	Let $\C$ be a regular $\V$-category; then regular epimorphisms are stable in $\C$ under powers with each element of $(\V_0)_{pf}$.
\end{prop}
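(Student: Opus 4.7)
The plan is to reduce powers by an arbitrary $Q \in (\V_0)_{pf}$ to powers by elements of the chosen strong generator $\P$, exploiting three closure properties of regular epimorphisms in $\C$: stability under $\P$-powers (by hypothesis on $\C$), stability under finite products (by the preceding proposition, item 5), and stability under retracts in the arrow category.

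First, I would show that every $Q \in (\V_0)_{pf}$ is a retract of a finite coproduct $\coprod_{i=1}^{n} P_i$ with each $P_i \in \P$. Proposition \ref{enoughproj}(4) already presents $Q$ as a retract of some (a priori large) coproduct $\coprod_{i \in I} P_i$; but since $Q$ is finitely presentable, the section $Q \to \coprod_{i \in I} P_i$ factors through the inclusion of a finite subcoproduct, yielding the desired finite retraction. Note that $\coprod_{i=1}^{n} P_i$ is itself finitely presentable, so the power $C^{\coprod_{i=1}^{n} P_i}$ exists in $\C$ for every $C$, as $\C$ has all finite powers.

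Next, for a regular epimorphism $e : A \to B$ in $\C$, I would exploit that powers convert coproducts in the exponent into products in $\C$: naturally in $C$, $C^{\coprod_{i=1}^{n} P_i} \cong \prod_{i=1}^{n} C^{P_i}$. Applied to the morphism $e$, this gives an isomorphism in the arrow category of $\C$ between $e^{\coprod_{i=1}^{n} P_i}$ and $\prod_{i=1}^{n} e^{P_i}$. Each factor $e^{P_i}$ is a regular epimorphism by assumption, and finite products of regular epimorphisms are regular epimorphisms, so $e^{\coprod_{i=1}^{n} P_i}$ is a regular epimorphism. Contravariance of the power construction in its exponent then exhibits $e^Q$ as a retract of $e^{\coprod_{i=1}^{n} P_i}$ in the arrow category of $\C$, and I would conclude by invoking the fact that retracts of regular epimorphisms are regular epimorphisms.

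The only step requiring a modest verification is this last closure of regular epimorphisms under retracts in the arrow category, but it is not a real obstacle: using that regular and strong epimorphisms coincide in $\C$ (item 2 of the same proposition), and that the orthogonality property defining strong epimorphisms against monomorphisms is trivially stable under retracts, one obtains the claim formally.
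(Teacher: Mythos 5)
Your proposal is correct and follows essentially the same route as the paper: present the finite projective $Q$ as a retract of a finite coproduct of generators (shrinking to a finite subcoproduct via finite presentability), observe that the power converts this coproduct into a finite product so that $e^{\coprod P_i}\cong\prod e^{P_i}$ is a regular epimorphism, and then transfer the conclusion to $e^Q$. The only (cosmetic) difference is in the last step: you invoke closure of regular (= strong) epimorphisms under retracts in the arrow category, while the paper uses only the split-epimorphism half of the retraction together with the cancellation property that $g$ is a regular epimorphism whenever $g\circ h$ is.
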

\begin{proof}
	Let $h:A\to B$ be a regular epimorphism in $\C$ and $P\in(\V_0)_{pf}$. By Proposition \ref{enoughproj}, $P$ is a split subobject of a coproduct $Q:=\coprod_iP_i$ with $P_i\in\P$; write $m:P\to Q$ for the split monomorphism. Since $P$ is also finitely presentable, we can assume the coproduct to be finite; as a consequence $Q$ is finitely presentable and $h^Q$ exists in $\C$. Moreover $h^Q\cong \prod_i (h^{P_i})$ is a regular epimorphism since the $h^{P_i}$ are, and regular epimorphisms are stable under finite products in each ordinary regular category.
	Consider then the square
	\begin{center}
		\begin{tikzpicture}[baseline=(current  bounding  box.south), scale=2]
		
		\node (a) at (0,0.8) {$A^Q$};
		\node (b) at (1,0.8) {$B^Q$};
		\node (c) at (0, 0) {$A^P$};
		\node (d) at (1, 0) {$B^P$};
		
		\path[font=\scriptsize]
		
		(a) edge [->>] node [above] {$h^Q$} (b)
		(a) edge [->>] node [left] {$A^m$} (c)
		(b) edge [->>] node [right] {$B^m$} (d)
		(c) edge [->] node [below] {$h^P$} (d);
		
		\end{tikzpicture}	
	\end{center}
	where $A^m$ and $B^m$ are split epimorphisms, and hence regular. As a consequence, since $\C_0$ is regular, it follows that $h^P$ is a regular epimorphism as desired.
\end{proof}

\begin{Def}
	A $\V$-category $\B$ is called exact if it is regular and in addition the ordinary category $\B_0$ is exact in the usual sense.
\end{Def}

Taking $\V=\bo{Set}$ or $\V=\bo{Ab}$ this notion coincides with the ordinary one of exact or abelian category. Note moreover that our base $\V$ may not be exact (but only regular).

\begin{obs}\label{reexreg}
	If $\V$ is a symmetric monoidal finitary variety, then $\V_0$ is an ordinary exact category and $\V$ is exact as a $\V$-category. Arguing as in Remark \ref{reg/lex}, it's easy to see that our notion of exactness then coincides with that of $\Phi'$-exactness for a suitable $\Phi'$ (different from that defining regularity). It follows then \cite[Theorem~7.7]{GL12:articolo} that each regular $\V$-category has an exact completion $\C_{\text{ex/reg}}$. Similarly each finitely complete $\V$-category $\C$ has an exact completion $\C_{\text{ex/lex}}$. These will be described explicitly in Section \ref{free}.
\end{obs}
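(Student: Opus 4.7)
The plan is to handle the five assertions of the remark in turn. The first two claims are essentially definitional. By Definition \ref{smfv}, $\V$ being a symmetric monoidal finitary variety requires $\V_0$ to be a finitary variety; and the definition at the start of Section \ref{fqv} builds Barr-exactness of the ordinary category into the meaning of ``finitary variety''. Hence $\V_0$ is an ordinary exact category. For the statement that $\V$ is exact as a $\V$-category, I would combine this with the earlier remark that $\V$ is regular as a $\V$-category (since $\V_0$ is regular and regular epimorphisms in $\V$ are stable under projective powers by Remark \ref{0p=p0}); together with the definition of exact $\V$-category, this is immediate.

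For the identification with $\Phi'$-exactness, I would follow exactly the template of Remark \ref{reg/lex}. There one chooses $\Phi$ to consist of the weights for finite limits (equivalently, finite conical limits and finite $\P$-powers), together with the weights for coequalizers of kernel pairs; Theorem \ref{Barr} together with \cite[Theorem~4.1]{GL12:articolo} then identifies $\Phi$-exactness with regularity in our sense. For exactness I would enlarge $\Phi$ to $\Phi'$ by adjoining the weights for quotients of (internal) $\V$-equivalence relations. A $\V$-category will be $\Phi'$-exact precisely when it is $\Phi$-exact and every $\V$-equivalence relation is effective; the key observation, made possible by the fact that $\V_0$ itself is Barr-exact, is that an internal $\V$-equivalence relation on $A$ in a regular $\V$-category $\C$ is nothing but an ordinary equivalence relation on $A$ in $\C_0$ satisfying the extra closure under $\P$-powers automatically guaranteed in the regular setting, so that ``every $\V$-equivalence relation is a kernel pair'' reduces to exactness of $\C_0$.

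Once $\Phi'$ has been pinned down, the final two assertions are a direct invocation of \cite[Theorem~7.7]{GL12:articolo}, which constructs the free $\Phi'$-exact completion of any $\Phi$-exact (respectively $\bo{Lex}$-type) $\V$-category; unpacking the definitions, these are exactly $\C_{\text{ex/reg}}$ for a regular $\C$ and $\C_{\text{ex/lex}}$ for a finitely complete $\C$, and the universal property transports verbatim. The principal obstacle, and the reason the remark defers the details to Section \ref{free}, is to pin down $\Phi'$ precisely and to produce a concrete model of the completion: one must adapt the classical description via pseudo-equivalence relations on finitely presented projectives to the $\V$-enriched setting, checking that the resulting category is $\V$-tensored by $\P$-powers in a way compatible with the required stability of regular epimorphisms; this verification is routine once the correct weights have been isolated but is not trivial, which is why the explicit construction is postponed.
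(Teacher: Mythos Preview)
Your proposal follows the paper's own approach: the remark carries no separate proof and is justified inline exactly as you do, by unwinding definitions for the first two claims, pointing back to Remark~\ref{reg/lex} for the $\Phi'$-exactness identification, and invoking \cite[Theorem~7.7]{GL12:articolo} for the existence of the completions.

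Two small corrections are worth making. First, in the Garner--Lack framework the class $\Phi$ consists of weights for \emph{colimits}, not limits: finite limits form the ambient lex base and are never part of $\Phi$. So $\Phi$ should contain only (a weight encoding) coequalizers of kernel pairs, and $\Phi'$ enlarges this to quotients of equivalence relations; your description, which folds the finite-limit weights into $\Phi$, misreads the setup of \cite{GL12:articolo}, though the subsequent reasoning is unaffected. Second, your anticipation of what Section~\ref{free} does is off: the explicit completions constructed there are not built via pseudo-equivalence relations at all, but rather through the duality of Theorem~\ref{main}, as $\C_{\text{ex/lex}}\simeq\textnormal{DEF}(\textnormal{Lex}(\C,\V),\V)$ and $\C_{\text{ex/reg}}\simeq\textnormal{DEF}(\textnormal{Reg}(\C,\V),\V)$. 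This is a genuinely different description from the classical one you sketch, and is the point of that section.
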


\section{Definable $\V$-Categories}\label{def}

We consider again categories enriched over a base $\V$ which is a symmetric monoidal \fqv\ with strong generator $\P\subseteq (\V_0)_{pf}$.

The following is the corresponding enriched version of Definition \ref{ofic}:

\begin{Def}
	Given an arrow $h:A\to B$ in a $\V$-category $\L$, an object $L\in\L$ is said to be $h${\em -injective} if $\L(h,L):\L(B,L)\to\L(A,L)$ is a regular epimorphism in $\V$. Given a small set $\M$ of arrows from $\L$ write $\M$-inj for the full subcategory of $\L$ consisting of those objects which are $h$-injective for each $h\in\M$. $\V$-categories arising in this way are called {\em enriched injectivity classes}, or just {\em injectivity classes} if no confusion will arise. If $\L$ is \lfp\ and the arrows in $\M$ have finitely presentable domain and codomain, we call $\M$-inj an {\em enriched finite injectivity class}. 
\end{Def}

Injectivity classes in the enriched context were first considered in \cite{LR12:articolo}. In that setting a more general notion is introduced: regular epimorphisms are replaced by a suitable class $\E$ of morphisms from $\V$. An object $L$ was called $\E$-injective if $\L(h,L)\in\E$, and an $\E$-injectivity class was the full subcategory of $\E$-injective objects with respect to a small set of morphisms. Among others was studied the case in which $\V$ was \lfp\ as a closed category and $\E$ the class of pure epimorphisms of $\V$, obtaining a characterization of enriched $\E$-injectivity classes \cite[Theorem~8.8 and 8.9]{LR12:articolo}. \\
If we assume the unit of $\V_0$ to be projective, exactly the same arguments apply to our case (replacing finite powers by projective ones, and using Proposition \ref{locallyproj}); hence we obtain two corresponding characterization theorems.

Before stating them, recall that a $\V$-category $\A$ is called {\em accessible} if there exists a regular cardinal $\lambda$ such that $\A$ has (conical) $\lambda$-filtered colimits and a small set of $\lambda$-presentable objects which generates $\A$ under $\lambda$-filtered colimits.

\begin{teo}\label{inj-char} Let $\L$ be a locally presentable $\V$-category, and $\A$ a full subcategory of $\L$. Consider the following conditions:\begin{enumerate}
		\item $\A$ is equivalent to a small injectivity class in $\L$;
		\item $\A$ is accessible, accessibly embedded, and closed under products and projective powers in $\L_0$;
		\item $\A$ is accessibly embedded and weakly reflective (in the sense of Definition \ref{weakref}).
	\end{enumerate}
Then $(1)\Rightarrow (2)\Rightarrow (3)$, while $(3)\Rightarrow (1)$ if the unit $I$ of $\V_0$ is projective.
\end{teo}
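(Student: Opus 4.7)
The plan is to adapt the proof of \cite[Theorems~8.8 and 8.9]{LR12:articolo}, which establish the analogous characterization for $\E$-injectivity classes with $\E$ the class of pure epimorphisms, when $\V$ is \lfp\ as a closed category. In our setting pure epimorphisms are replaced by regular epimorphisms, and powers by finitely presentable objects are replaced by powers by finite projective objects. Proposition \ref{locallyproj}, which says that regular epimorphisms in $\V$ are detected by the representables $\V_0(P,-)$ for $P$ ranging over the small strong generator $\P\subseteq(\V_0)_{pf}$, plays the role that detection at finitely presentables plays in the pure setting, so the same overall scheme of arguments goes through.

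For $(1)\Rightarrow(2)$, suppose $\A=\M$-inj for a small set $\M$ of morphisms in $\L$. Closure under projective powers is immediate: for $P\in(\V_0)_{pf}$, $L\in\A$ and $h\in\M$ one has $\L(h,L^P)\cong[P,\L(h,L)]$, which is a regular epimorphism by Remark \ref{0p=p0}. Closure under small products is Proposition \ref{enoughproj}(1) applied in $\V$. Accessibility and accessible embedding follow from the enriched analogue of the standard fact that small injectivity classes in locally presentable categories are accessible and accessibly embedded, proved via the small object argument applied to $\M$.

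For $(2)\Rightarrow(3)$, one constructs weak reflections using a transfinite small object argument inside $\A$. Choose a regular cardinal $\lambda$ for which $\A$ is $\lambda$-accessible and $\lambda$-accessibly embedded, then identify a small set of morphisms with $\lambda$-presentable domain and codomain whose combined injectivity detects membership in $\A$; here closure under products and projective powers is used to reduce the enriched injectivity tests (whose targets are regular epimorphisms in $\V$) to a manageable set of ordinary conditions. Iteratively attaching cells that kill these tests produces, in the limit, a weak reflection in the sense of Definition \ref{weakref}, and accessibility ensures termination.

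For $(3)\Rightarrow(1)$, the projectivity of $I$ enters decisively: it guarantees that $\V_0(I,-)$ sends regular epimorphisms to surjections, so the enriched regular-epimorphism condition on $\L(p,T)$ defining a weak reflection can be tested on underlying hom-sets. Given this, one builds a small set $\M$ from weak reflections of a small dense family of $\lambda$-presentable generators together with auxiliary maps witnessing the accessible embedding, and verifies that $\A=\M$-inj. The main obstacle will be this last step, since without projectivity of $I$ the regular epimorphism condition in $\V$ is a genuinely enriched property not reducible to extension of elements, and the weak reflections alone provide too little information to reconstruct $\A$ as an injectivity class; this is exactly why the equivalence $(3)\Leftrightarrow(1)$ is conditional on the projectivity hypothesis.
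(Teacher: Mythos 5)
Your proposal takes essentially the same approach as the paper: the paper's entire ``proof'' of this theorem is the remark that the arguments of \cite[Theorems~8.8 and 8.9]{LR12:articolo} for $\E$-injectivity classes carry over verbatim once pure epimorphisms are replaced by regular epimorphisms, finite powers by projective powers, and detection of regular epimorphisms is handled via Proposition \ref{locallyproj}. Your expanded sketches of the three implications, including the observation that projectivity of $I$ is what allows the enriched condition on $\L(p,T)$ to be tested on underlying hom-sets in the step $(3)\Rightarrow(1)$, are consistent with that adaptation.
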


\begin{teo} Assume the unit $I$ of $\V_0$ to be projective. For a $\V$-category $\A$, the following are equivalent:
	\begin{enumerate}
		\item $\A$ is a weakly reflective, accessibly embedded subcategory of $[\C,\V]$ for some small $\V$-category $\C$;
		\item $\A$ is equivalent to a small injectivity class in some locally presentable $\V$-category;
		\item $\A$ is accessible and has products and projective powers;
	\end{enumerate}
	
\end{teo}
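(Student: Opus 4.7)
The plan is to bootstrap off Theorem \ref{inj-char}, treating two of the implications as direct corollaries and concentrating effort on $(3)\Rightarrow(1)$.

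For $(1)\Rightarrow(2)$, note that $[\C,\V]$ is itself locally presentable, so applying the implication $(3)\Rightarrow(1)$ of Theorem \ref{inj-char} (which requires $I$ projective, as assumed) to the weakly reflective, accessibly embedded subcategory $\A\subseteq[\C,\V]$ exhibits $\A$ as equivalent to a small injectivity class in the locally presentable $\V$-category $[\C,\V]$. For $(2)\Rightarrow(3)$, if $\A\subseteq\L$ is a small injectivity class in a locally presentable $\V$-category $\L$, then $(1)\Rightarrow(2)$ of Theorem \ref{inj-char} shows $\A$ is accessible, accessibly embedded, and closed in $\L$ under products and projective powers; in particular $\A$ itself has these limits, giving $(3)$.

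The substantive direction is $(3)\Rightarrow(1)$. Given $\A$ accessible with products and projective powers, choose a regular cardinal $\lambda$ so that $\A$ is $\lambda$-accessible, and set $\C := (\A_\lambda)^{op}$, the opposite of the small full $\V$-subcategory of $\lambda$-presentable objects. Consider the restricted Yoneda $\V$-functor
\[
E \colon \A \longrightarrow [\C,\V], \qquad E(A) = \A(-,A)\big|_{\A_\lambda}.
\]
This is fully faithful by $\lambda$-accessibility of $\A$, preserves $\lambda$-filtered colimits, and preserves products and projective powers, since for each $X\in\A_\lambda$ one has $\A(X,A\times B)\cong\A(X,A)\times\A(X,B)$ and $\A(X,A^P)\cong[P,\A(X,A)]$, while products and projective powers in $[\C,\V]$ are computed pointwise. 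Hence the essential image $\A'\subseteq[\C,\V]$ is equivalent to $\A$ (so accessible), is accessibly embedded in the locally presentable $\V$-category $[\C,\V]$, and is closed under products and projective powers. Applying $(2)\Rightarrow(3)$ of Theorem \ref{inj-char} to $\A'\subseteq[\C,\V]$ produces the weak reflectivity, yielding $(1)$.

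The main obstacle is the construction of a suitable embedding in $(3)\Rightarrow(1)$: one must verify that restricted Yoneda lands in an accessibly embedded, product- and projective-power-closed replete subcategory. Fullness and faithfulness, as well as preservation of projective powers (where the enriched formula $\C(B,C^X)\cong[X,\C(B,C)]$ is essential), are the key points; everything else is formal. Once the embedding is in place, Theorem \ref{inj-char} supplies weak reflectivity, and the projectivity of $I$ is used only indirectly through the hypotheses of the implications of Theorem \ref{inj-char} that we invoke.
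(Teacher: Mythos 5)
Your proof is correct and follows essentially the same route as the paper, which obtains this theorem by adapting the arguments of Lack--Rosick\'y's \emph{Enriched weakness} (Theorems 8.8 and 8.9): the cycle is derived from the subcategory-level characterization (Theorem \ref{inj-char}), with the only substantive step being $(3)\Rightarrow(1)$ via the restricted Yoneda embedding into $[(\A_\lambda)^{op},\V]$, exactly as you do. Your verification that the embedding is fully faithful, preserves $\lambda$-filtered colimits, products and projective powers, and hence has replete image satisfying condition $(2)$ of Theorem \ref{inj-char}, is the standard argument and is sound.
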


In this paper we focus in particular on {\em finite} injectivity classes, for which such characterization theorems are more difficult to obtain. It's easy to see that for each enriched (finite) injectivity class $\D$ the underlying category $\D_0$ is an ordinary (finite) injectivity class: indeed, if $\D=\M$-inj in $\L$, then $\D_0=\M_0$-inj in $\L_0$ where 
$$\M_0=\{ P\cdot h\ |\ P\in\P,\ h\in\M \},$$
this because $\L(h,S)$ is a regular epimorphism in $\V$ if and only if $\L_0(P\cdot h,S)\cong\V_0(P,\L(h,S))$ is surjective for each $P\in\P$ (by Proposition \ref{locallyproj}).

In the ordinary case each finite injectivity class is known to be closed under pure subobjects inside its \lfp\ category \cite[Theorem~2.2]{RAB02:articolo}; let us recall the definition and introduce a new notion we use in the enriched context.

\begin{Def}
	Let $f:X\to Y$ be a morphism in an ordinary \lfp\ category $\L$. We say that $f$ is {\em pure} if for each commutative square
	\begin{center}
		\begin{tikzpicture}[baseline=(current  bounding  box.south), scale=2]
		
		\node (a) at (0,0.8) {$A$};
		\node (b) at (0.8,0.8) {$B$};
		\node (c) at (0, 0) {$X$};
		\node (d) at (0.8, 0) {$Y$};
		
		\path[font=\scriptsize]
		
		(a) edge [->] node [above] {$h$} (b)
		(a) edge [->] node [left] {$u$} (c)
		(b) edge [->] node [right] {$v$} (d)
		(c) edge [->] node [below] {$f$} (d);
		
		\end{tikzpicture}	
	\end{center}
	with $h:A\to B$ in $\L_f$, there exists $l:B\to X$ such that $l\circ h=u$.\\
	If $\L$ is a \lfp\ $\V$-category, we say that $f:X\to Y$ in $\L$ is {\em $\P$-pure} if $f^P$ is pure for each $P\in\P$.
\end{Def}

The notion of purity we are considering is the ordinary one: a morphism $f$ in the \lfp\ $\V$-category $\L$ is pure just when it is pure in the ordinary category $\L_0$. Since the copower of a finitely presentable object of $\L$ by an object in $\P$ is still finitely presentable, any pure morphism in $\L$ is $\P$-pure. If $I$ is projective then any $\P$-pure object is pure, but in general this need not be the case: see Example \ref{G-pure}.

It is shown in \cite[Proposition~2.29]{AR94:libro} that each pure morphism (in a locally finitely presentable category) is actually a monomorphism, so that we can talk about pure subobjects. The same result easily follows also for $\P$-pure morphisms: if $f$ is $\P$-pure then each $f^P$ is a monomorphism (being pure); so each $\L(X,f^P)$ is a monomorphism, and hence each $\V_0(P,\L(X,f))$ is one. But $\P$ is a strong generator, so each $\L(X,f)$ is a monomorphism, so finally $f$ is a monomorphism.

\begin{prop}\label{ficclose}
	Each finite injectivity class $\D=\M\textnormal{-inj}$ of a \lfp\ $\V$-category $\L$ is closed under (small) products, projective powers (meaning that if $S\in\D$ and $P\in\V_{0p}$, then $S^P\in\D$), filtered colimits, and $\P$-pure subobjects (if $f:X\to Y$ is $\P$-pure and $Y\in\D$ then $X\in\D$).
\end{prop}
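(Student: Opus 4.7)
The plan is to reduce to a single morphism $h: A \to B$ in $\M$ between finitely presentable objects, since $\D = \M\textnormal{-inj}$ is the intersection over $h \in \M$ of the classes $\{h\}\textnormal{-inj}$; and note that $A, B$ being finitely presentable in $\L$ entails their finite presentability in $\L_0$ (apply $\V_0(I,-)$ to a filtered colimit). For each of the four closure properties, the strategy is to compute $\L(h,-)$ on the construction in question and then invoke the corresponding closure property of regular epimorphisms in $\V$.

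Three of the cases should be essentially formal. For products, I would use $\L(h, \prod_i L_i) \cong \prod_i \L(h, L_i)$ and apply Proposition \ref{enoughproj}(1), which ensures that regular epimorphisms are closed under small products in $\V$. For projective powers, the adjunction for the power gives $\L(h, L^P) \cong [P, \L(h, L)]$, and Remark \ref{0p=p0} then does the work: any $P$ projective in $\V_0$ is $\V$-projective, so $[P,-]$ preserves regular epimorphisms. For filtered colimits, finite presentability of $A$ and $B$ yields $\L(h, \mathrm{colim}_i L_i) \cong \mathrm{colim}_i \L(h, L_i)$, reducing the task to showing that filtered colimits of regular epis in $\V$ remain regular epi; I would verify this via Proposition \ref{locallyproj}, applying $\V_0(Q,-)$ for each $Q \in \P$ to turn the statement into one about filtered colimits of surjections in $\bo{Set}$.

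I expect the closure under $\P$-pure subobjects to be the main obstacle. Given a $\P$-pure morphism $f: X \to Y$ with $Y \in \D$, the goal is to show that $\L(h, X)$ is a regular epi in $\V$; by Proposition \ref{locallyproj} this reduces to surjectivity of $\V_0(P, \L(h, X))$ for each $P \in \P$. Under the copower/power adjunctions, an element of $\V_0(P, \L(A, X))$ transposes to a morphism $u: A \to X^P$ in $\L_0$, and surjectivity amounts to extending every such $u$ along $h$ to some $\bar u: B \to X^P$. My plan is to first transport $u$ by $f^P$ to a map $f^P \circ u: A \to Y^P$; then use $h$-injectivity of $Y$, read through the same correspondence, to extend $f^P \circ u$ along $h$ to some $v: B \to Y^P$; and finally invoke purity of $f^P$ in $\L_0$ (which holds by definition of $\P$-purity) on the square with $h: A \to B$ across the top (a morphism between finitely presentable objects) and $f^P: X^P \to Y^P$ down the right side, to obtain a diagonal filler $\bar u: B \to X^P$ with $\bar u \circ h = u$. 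The delicate part will be keeping the translations between the three equivalent viewpoints---$\L(h,-)$ being a regular epi in $\V$, $\V_0(P,-)$ applied to it being surjective, and extendability of maps $A \to X^P$ along $h$ in $\L_0$---precisely aligned.
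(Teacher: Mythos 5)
Your proposal is correct and follows essentially the same route as the paper: the first three closure properties are obtained by applying $\L(h,-)$ (using finite presentability of the domain and codomain of $h$ for filtered colimits) and invoking the stability of regular epimorphisms in $\V$ under products, projective powers, and filtered colimits, while the $\P$-pure case is reduced via $(-)^P$ and Proposition \ref{locallyproj} to ordinary $h$-injectivity of $X^P$, deduced from purity of $f^P$ and injectivity of $Y^P$. The only difference is cosmetic: the paper delegates that last step to the known ordinary fact that injectivity classes are closed under pure subobjects, whereas you unfold the diagonal-filler argument explicitly (and you also make explicit the point, implicit in the paper, that enriched finite presentability of $A,B$ gives ordinary finite presentability because $I\in(\V_0)_f$).
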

\begin{proof}
	Given any arrow $h\in\M$ and any object $L\in\L_0$, we can see $\L(h,L)$ as an object of the category of arrows $\V_0^2$; since the domain and codomain of $h$ are finitely presentable, the hom-functor $\L(h,-)_0:\L_0\to\V_0^2$ preserves filtered colimits as well as products and projective powers (since it preserves all limits). Note moreover that regular epimorphisms are stable in $\V$ under filtered colimits, products, and projective powers (as we saw in Section \ref{fqv}). As a consequence, if $S=\text{colim}_iS_i$ is a filtered colimit of objects of $\D$, then $\L(h,S)\cong \text{colim}_i\L(h,S_i)$ is a regular epimorphism; hence $S\in\D$. The same applies if $S$ is a product or a projective power of elements from $\D$.\\
	Suppose now that $Y$ is $\M$-injective, $X\in\L$, and $f:X\to Y$ is $\P$-pure. We are to show that $X$ is also $\M$-injective; or in other words, that $\L(h,X)$ is a regular epimorphism for all $h\in\M$, or that $\V_0(P,L(h,X))$ is a regular epimorphism for all $h\in\M$ and $P\in\P$; or that $X^P$ is injective in the ordinary sense with respect to $h$. But $f^P:X^P\to Y^P$ is pure, and $Y^P$ is injective with respect to $h$, so this follows by the ordinary case.
\end{proof}

We are now ready to introduce the notion of definable $\V$-category, which generalizes that introduced in \cite{Pre11:libro} for $\V=\bo{Ab}$, and in \cite{KR18:articolo} for $\V=\bo{Set}$.

\begin{Def}\label{defdef}
	A full subcategory of a \lfp\ $\V$-category $\L$ is called a {\em definable subcategory} of $\L$ if it is an enriched finite injectivity class of $\L$. A $\V$-category $\D$ is called {\em definable} if it is equivalent to a definable subcategory of some \lfp\ $\V$-category.
	A {\em definable functor} between definable $\V$-categories is a $\V$-functor that preserves products, projective powers, and filtered colimits. Denote by $\V\text{-}\bo{DEF}$ the 2-category of definable $\V$-categories, definable functors, and $\V$-natural transformations; we may allow ourselves to write $\textnormal{DEF}(\D,\D')$ for $\V\textnormal{-DEF}(\D,\D')$ when it is clear that we are dealing with definable $\V$-categories.
\end{Def}

It follows that, if $\D$ is a definable subcategory of $\L$ as above, then the underlying category $\D_0$ of $\D$ is an ordinary definable subcategory of $\L_0$.

\begin{obs}\label{regdef}
	Each \lfp\ $\V$-category is definable; moreover for any small regular $\V$-category $\C$, the $\V$-category $\text{Reg}(\C,\V)$ is a definable subcategory of $\text{Lex}(\C,\V)$. Indeed, $\text{Lex}(\C,\V)$ is \lfp\ and $\text{Reg}(\C,\V)=\M$-inj in $\text{Lex}(\C,\V)$ where $$\M:=\{\C(h,-)\ |\ h \text{ regular epimorphism in } \C\}.$$ Indeed a lex functor $F\in\text{Lex}(\C,\V)$ is in $\text{Reg}(\C,\V)$ if and only if $Fh$ is a regular epimorphism for each regular epimorphism $h$ in $\C$. But $Fh\cong\text{Lex}(\C,\V)(\C(h,-),F)$; hence $F$ is regular if and only if it is injective with respect to $\C(h,-)$ for any regular epimorphism $h$ in $\C$.
\end{obs}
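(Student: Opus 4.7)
The remark asserts two things: that every \lfp\ $\V$-category is definable, and that for any small regular $\V$-category $\C$, the enriched category $\text{Reg}(\C,\V)$ is a definable subcategory of $\text{Lex}(\C,\V)$. The first is immediate from Definition~\ref{defdef}: any \lfp\ $\V$-category $\L$ is tautologically $\M$-inj for $\M=\emptyset$, which is an enriched finite injectivity class by default. So the substance lies in the second assertion, which I would split into three ingredients: (i) $\text{Lex}(\C,\V)$ is \lfp\ as a $\V$-category; (ii) each $\C(h,-)$ has finitely presentable domain and codomain, so that $\M$ satisfies the hypothesis for a \emph{finite} injectivity class; and (iii) $\text{Reg}(\C,\V)$ and $\M\textnormal{-inj}$ coincide as full subcategories of $\text{Lex}(\C,\V)$.

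For (i) I would invoke Kelly's enriched Gabriel--Ulmer duality (Theorem~\ref{kelly}) applied to the small finitely complete $\V$-category $\C$, since every regular $\V$-category is in particular finitely complete; this directly yields that $\text{Lex}(\C,\V)$ is \lfp. For (ii) I would appeal to the enriched Yoneda lemma, which gives a $\V$-natural isomorphism $\text{Lex}(\C,\V)(\C(A,-),G)\cong GA$ in $G$; combined with the fact that filtered colimits in $\text{Lex}(\C,\V)$ are computed pointwise and that evaluation at $A$ preserves filtered colimits in $\V$, this shows each representable $\C(A,-)$ is finitely presentable in $\text{Lex}(\C,\V)$, so every morphism $\C(h,-)$ in $\M$ lives between finitely presentable objects.

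For (iii), a lex $F:\C\to\V$ is regular exactly when it preserves regular epimorphisms, since preservation of finite weighted limits is already built into being lex. Using naturality of the Yoneda isomorphism in the arrow variable $h:A\to B$, the map $\text{Lex}(\C,\V)(\C(h,-),F):\text{Lex}(\C,\V)(\C(A,-),F)\to\text{Lex}(\C,\V)(\C(B,-),F)$ is identified with $Fh:FA\to FB$ in $\V$. Consequently $F$ is $\C(h,-)$-injective precisely when $Fh$ is a regular epimorphism, so $F\in\M\textnormal{-inj}$ if and only if $F$ preserves every regular epimorphism of $\C$, which is exactly the condition $F\in\text{Reg}(\C,\V)$. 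The only point requiring care is this last identification: one must ensure that the Yoneda isomorphism is natural simultaneously in $F$ and in the arrow $h$, so that the transported arrow really is $Fh$ itself and not merely isomorphic to it; this is standard, but is what makes the identification of regularity with injectivity clean.
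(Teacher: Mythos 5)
Your proposal is correct and follows essentially the same route as the paper: identify $\text{Reg}(\C,\V)$ with $\M$-inj via the Yoneda isomorphism $\text{Lex}(\C,\V)(\C(h,-),F)\cong Fh$, with $\text{Lex}(\C,\V)$ being \lfp\ by Theorem~\ref{kelly}. The only difference is that you explicitly verify the representables are finitely presentable (so that $\M$ gives a \emph{finite} injectivity class), a point the paper leaves implicit; this is a worthwhile addition but not a divergence in method.
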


In the ordinary case, closure under the three constructions in Proposition \ref{ficclose} is enough to characterize definable subcategories; indeed it is proven in \cite[Theorem~2.2]{RAB02:articolo} that a full subcategory $\D$ of a \lfp\ category $\L$ is a finite injectivity class if and only if it is closed in $\L$ under products, filtered colimits and pure subobjects (powers are not necessary since they are a special kind of products). We can obtain a similar result in this context with a further assumption.

\begin{prop}\label{I+closeddef}
	Assume the unit $I$ of $\V_0$ to be projective. Let $\D$ be a full subcategory of a \lfp\ $\V$-category $\L$; then $\D$ is a definable subcategory of $\L$ if and only if it is closed in $\L$ under products, projective powers, filtered colimits, and pure subobjects.
\end{prop}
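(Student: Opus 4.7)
The forward implication is essentially already done: Proposition \ref{ficclose} gives closure under products, projective powers, filtered colimits, and $\P$-pure subobjects, and every pure morphism is $\P$-pure, so a fortiori $\D$ is closed under pure subobjects.

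For the converse, the plan is to descend to the underlying ordinary category $\L_0$, invoke the classical finite-injectivity characterisation, and then upgrade the ordinary presentation to an enriched one using projectivity of $I$ together with the hypothesis that $\D$ is closed under projective powers. Concretely, since conical products and filtered colimits in $\L$ agree with products and filtered colimits in $\L_0$, and since the assumption that $I$ is projective makes ``pure'' and ``$\P$-pure'' coincide in $\L$ (as discussed just before the statement), $\D_0 \subseteq \L_0$ inherits the three closure properties needed in the ordinary setting. Applying \cite[Theorem~2.2]{RAB02:articolo} then produces a small set $\N$ of morphisms in $\L_0$ between ordinary finitely presentable objects with $\D_0 = \N\text{-inj}$ in $\L_0$; I would note in passing that for an lfp $\V$-category $(\L_0)_f$ and $\L_f$ coincide, since every object of $\L$ is a conical filtered colimit of objects of $\L_f$, so any ordinary f.p.\ object is a retract of an enriched f.p.\ one, and $\L_f$ is closed under retracts. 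Hence $\N$ can be viewed as a set of morphisms between enriched finitely presentable objects.

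The heart of the proof is then to show $\D = \N\text{-inj}$ enrichedly. For the inclusion $\D \subseteq \N\text{-inj}$, given $L \in \D$ and $h \in \N$, I would use closure under projective powers to get $L^P \in \D \subseteq \D_0 = \N\text{-inj}$ for every $P \in \P$, so that $\L_0(h, L^P) \cong \V_0(P, \L(h, L))$ is surjective; since $\P$ is a strong generator consisting of projectives, Proposition \ref{locallyproj} then forces $\L(h, L)$ to be a regular epimorphism in $\V$, which is exactly enriched $h$-injectivity. For the reverse inclusion, if $\L(h, L)$ is a regular epi in $\V$ for every $h \in \N$, then applying $\V_0(I,-)$ — which preserves regular epimorphisms because $I$ is projective — recovers surjectivity of $\L_0(h, L)$, hence $L \in \N\text{-inj}$ in $\L_0$, which equals $\D_0$, so $L \in \D$. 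This exhibits $\D$ as an enriched finite injectivity class, and so a definable subcategory.

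The one technical point I expect to need some care is the identification $(\L_0)_f = \L_f$ when quoting the ordinary theorem with the right notion of ``small'' arrows, but this is standard for lfp $\V$-categories over our base. Everything else is a clean back-and-forth between the underlying and enriched pictures, bridged by the strong-generator criterion of Proposition \ref{locallyproj} together with the projective-power trick that trades enriched regular epimorphisms for ordinary surjectivity of hom-sets into suitable powers.
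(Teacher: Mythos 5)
Your proof is correct and follows essentially the same route as the paper's: reduce to the ordinary finite-injectivity characterisation of \cite[Theorem~2.2]{RAB02:articolo} for $\D_0$, then show the same set of arrows defines $\D$ enrichedly, using closure under projective powers together with Proposition \ref{locallyproj} for one inclusion and projectivity of $I$ for the other. The only difference is that you spell out the identification of ordinary and enriched finitely presentable objects, a detail the paper leaves implicit.
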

\begin{proof}
	One direction is given by Proposition \ref{ficclose}. For the other, assume that $\D$ is closed in $\L$ under products, projective powers, filtered colimits, and pure subobjects. By \cite[Theorem~2.2]{RAB02:articolo}, $\D_0$ is an ordinary finite injectivity class in $\L_0$. Let $\M$ be the set of arrows defining $\D$ as such; we prove that it also defines $\D$ as an enriched finite injectivity class, and hence a definable subcategory of $\L$. Given $S\in\D$, $\L_0(h,S)$ is surjective for each $h\in\M$; but $\D$ is closed under projective powers, hence $\V_0(P,\L(h,S))\cong \L_0(h,S^P)$ is surjective for each $P\in\P$; thus $\L(h,S)$ is a regular epimorphism and $S\in\M$-inj. Conversely, given $S\in\M$-inj, $\L_0(h,S)=\V_0(I,\L(h,S))$ is surjective since $I\in\V_{0p}$, and as a consequence, $S\in\D$.
\end{proof}

This doesn't hold if $I$ is not projective, as explained in the following example:

\begin{es}\label{G-pure}
	Let $G$ be a non-trivial finite group and let $\V$ be the cartesian closed category $\bo{Set}^G$ of $G$-sets. A strong generator that makes $\bo{Set}^G$ a symmetric monoidal finitary variety is the unique representable object, which is just $G$ with the regular action. A morphism $f:X\to Y$ in $\bo{Set}^G$ is $G$-pure if and only if it is a monomorphism, and either both $X$ and $Y$ are empty or neither is. In this context $G$-purity is strictly weaker than usual purity; for example the inclusion $G\to G+1$ is $G$-pure but not pure.
	
	Consider now the full subcategory $\D$ of $\bo{Set}^G$ consisting of those $G$-sets that have a fixed point. This is closed under products, filtered colimits, projective powers and pure subobjects, but not under $G$-pure subobjects ($G+1\in\D$ and $G\to G+1$ is $G$-pure, but $G\notin\D$). It follows that $\D$ is not an enriched finite injectivity class in $\bo{Set}^G$.
\end{es}

By Proposition \ref{ficclose}, a natural question would then be: if $\L$ is a \lfp\ $\V$-category and $\D$ is a full subcategory closed under products, filtered colimits, projective powers, and $\P$-pure subobjects, is it an enriched finite injectivity class? 
We still don't have an answer for this in general.

\section{Barr's Embedding Theorem}\label{BarrEmb}

	Let us fix a small regular $\V$-category $\C$ and consider $\textnormal{Reg}(\C,\V)$ as a full subcategory of $\textnormal{Lex}(\C,\V)$.

\begin{lema}\label{coreg}
	$\textnormal{Lex}(\C,\V)$ is a coregular $\V$-category.
\end{lema}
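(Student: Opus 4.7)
The plan is to verify that $\textnormal{Lex}(\C,\V)$ satisfies the four conditions dual to those defining a regular $\V$-category: existence of finite weighted colimits, existence of equalizers of cokernel pairs, pushout-stability of regular monomorphisms, and closure of regular monomorphisms under copowers by elements of $\P$.

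The first two conditions are essentially immediate. Since $\C$ is regular and hence finitely complete, Kelly's enriched Gabriel--Ulmer duality (Theorem \ref{kelly}) gives that $\textnormal{Lex}(\C,\V)$ is locally finitely presentable as a $\V$-category, and so is both $\V$-complete and $\V$-cocomplete. Moreover the inclusion $\iota\colon\textnormal{Lex}(\C,\V)\hookrightarrow [\C,\V]$ is right adjoint to the lex-reflection $\rho$, and in particular preserves limits; hence limits in $\textnormal{Lex}(\C,\V)$ are computed pointwise.

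The core of the proof is a pointwise characterization of regular monomorphisms: a morphism $m\colon F\to G$ in $\textnormal{Lex}(\C,\V)$ is a regular monomorphism if and only if each component $m_c$ is a regular monomorphism in $\V$. The forward direction follows at once from the pointwise computation of equalizers. For the converse, given a pointwise regular mono, I would form its cokernel pair $u,v\colon G\to H$ pointwise in $[\C,\V]$, reflect along $\rho$ to obtain $\rho u,\rho v\colon G\to \rho H$ in $\textnormal{Lex}(\C,\V)$, and verify via the universal property of the reflection, together with the pointwise nature of equalizers in $\textnormal{Lex}(\C,\V)$, that $m$ is still the equalizer of $\rho u,\rho v$ in $\textnormal{Lex}(\C,\V)$.

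With this characterization in hand, the remaining two conditions reduce to corresponding pointwise statements in $\V$. For closure under copowers: the copower $P\cdot (-)$ in $\textnormal{Lex}(\C,\V)$ is computed pointwise as $P\otimes(-)$ (using that finite projective objects of $\V$ are flat enough for $P\otimes(-)$ to send lex functors to lex functors), and the pointwise regular mono $m$ satisfies that each $P\otimes m_c$ is a regular mono in $\V$. For pushout-stability: pushouts in $\textnormal{Lex}(\C,\V)$ arise by applying $\rho$ to pointwise pushouts in $[\C,\V]$, and one shows the induced map remains pointwise a regular monomorphism in $\V$, hence a regular mono in $\textnormal{Lex}(\C,\V)$ by the characterization. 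The main obstacle is precisely the interaction between the lex-reflection $\rho$ and pointwise regular monomorphisms --- both in verifying that $\rho$ does not enlarge the relevant equalizer in the converse of the pointwise characterization, and in ensuring that the lex-reflection of a pointwise regular mono (arising from a pushout or copower) remains pointwise a regular mono. Both hinge on the specific structure of lex functors together with the regularity of $\V$.
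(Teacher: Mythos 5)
Your strategy of reducing everything to pointwise statements in $\V$ contains genuine gaps, and it is not the route the paper takes. The most serious issue is the converse of your pointwise characterization of regular monomorphisms, which you correctly flag as ``the main obstacle'' but never actually prove --- and which is false in general. Take $\V=\bo{Set}$: there a pointwise regular monomorphism is just a pointwise injection, i.e.\ a monomorphism of $\L=\textnormal{Lex}(\C,\V)$, so your characterization would force every monomorphism of $\L$ to be regular. Restricting to representables, a map $\C(h,-)\colon\C(B,-)\to\C(A,-)$ is a monomorphism precisely when $h\colon A\to B$ is an epimorphism of $\C$, whereas it is a \emph{regular} monomorphism of $\L$ precisely when $h$ is a \emph{regular} epimorphism (this is implicit in the paper's proof, via Chikhladze's description of regular monomorphisms of $\L$); since a regular category may contain non-regular epimorphisms, the characterization fails. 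Relatedly, the claim that the copower $P\cdot F$ is computed pointwise as $P\otimes F(-)$ is also false in general: for a cartesian $\V$ (e.g.\ $\bo{Set}^G$ with $P=G$) the functor $P\times F(-)$ does not preserve the terminal object, so it is not lex and the copower is only obtained after applying the reflection $\rho$. Finally, even granting your characterization, the assertion that $\rho$ applied to a pointwise pushout of a pointwise regular mono ``remains pointwise a regular monomorphism'' is precisely the content of the lemma and is left unargued; $\rho$ is a left adjoint and there is no general reason for it to preserve such maps.

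The paper's proof avoids the pointwise analysis entirely by working with representables. Following \cite{Chi11:articolo}, every pushout diagram in $\L$ with one leg a regular monomorphism --- and every regular monomorphism itself --- is a filtered colimit of diagrams of the same shape between representables. Since filtered colimits commute with finite limits in $\L$ and $P\cdot(-)$ preserves colimits, both pushout-stability and closure under copowers by $P\in\P$ reduce to the case $H=\C(h,-)$ with $h$ a regular epimorphism of $\C$. There one uses that the restricted Yoneda embedding $\C\to\L^{op}$ preserves finite limits, so that $P\cdot\C(h,-)\cong\C(h^P,-)$, and concludes from the regularity of $\C$ (stability of regular epimorphisms under $\P$-powers and under pullback). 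Any repair of your argument would need to replace the pointwise reductions by a density/finite-presentability argument of this kind.
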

\begin{proof}
	Let $\L:=\textnormal{Lex}(\C,\V)$; it is shown in \cite[Theorem~3]{Chi11:articolo} that regular monomorphisms are stable in $\L$ under pushouts (note that the notion of regular category appearing in the cited paper is different from ours, but the same proof applies to this setting); this is done by proving that each pushout diagram with one specified arrow a regular monomorphism can be written as a filtered colimit of representable diagrams of the same kind. As a consequence we only need to show that if $H$ is a regular monomorphism in $\L$ then so is $P\cdot H$ for each $P\in\P$. The same argument used for pushout diagrams shows (see again \cite[Theorem~3]{Chi11:articolo}) that each regular monomorphism of $\L$ is a filtered colimit of regular monomorphisms between representables; since $P\cdot -$ preserves colimits it is then enough to consider $H=\C(h,-)$ for a regular epimorphism $h$ in $\C$. Now, the restricted Yoneda embedding $\C\to\L^{op}$ preserves finite limits; hence $P\cdot \C(h,-)\cong\C(h^P,-)$ for each $P\in\P$. But $\C$ is regular, therefore $h^P$ is a regular epimorphism and as a consequence, $P\cdot \C(h,-)$ is a regular monomorphism as claimed.
\end{proof}

\begin{lema}\label{regfic}
	$\textnormal{Reg}(\C,\V)$ is a definable and a weakly reflective subcategory of $\textnormal{Lex}(\C,\V)$ and the weak reflections can be chosen to be regular monomorphisms.
\end{lema}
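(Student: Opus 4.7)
The definability is immediate from Remark \ref{regdef}: we showed there that $\textnormal{Reg}(\C,\V) = \M\textnormal{-inj}$ in $\textnormal{Lex}(\C,\V)$ for $\M = \{\C(h,-) \mid h \text{ a regular epimorphism in } \C\}$, and the domains and codomains of the morphisms in $\M$ are representables and hence finitely presentable in $\textnormal{Lex}(\C,\V)$. So $\textnormal{Reg}(\C,\V)$ is an enriched finite injectivity class, i.e.\ a definable subcategory of $\textnormal{Lex}(\C,\V)$.

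For the weak reflections, the plan is to apply the ordinary small object argument (Theorem \ref{wr}) to the underlying ordinary category $\textnormal{Lex}(\C,\V)_0$ with the family
$$\M_0 := \{P \cdot \C(h,-) \mid P \in \P,\ h \text{ a regular epimorphism in } \C\}.$$
As observed after Definition \ref{defdef}, and using Proposition \ref{locallyproj}, $\textnormal{Reg}(\C,\V)_0$ coincides with $\M_0\textnormal{-inj}$ in $\textnormal{Lex}(\C,\V)_0$. Moreover $P \cdot \C(C,-) \cong \C(C^P,-)$ and $h^P$ is again a regular epi in $\C$ (since $\C$ is regular and $P \in \P$), so every morphism of $\M_0$ has the form $\C(g,-)$ for a regular epi $g$ in $\C$; in particular its domain and codomain are representable, hence finitely presentable in $\textnormal{Lex}(\C,\V)$. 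Theorem \ref{wr} then produces, for each $F \in \textnormal{Lex}(\C,\V)$, a morphism $r_F : F \to RF$ which is an ordinary weak reflection of $F$ into $\textnormal{Reg}(\C,\V)_0$ and is realised as a transfinite composite of pushouts of morphisms in $\M_0$.

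I would then upgrade $r_F$ to an enriched weak reflection. For $T \in \textnormal{Reg}(\C,\V)$ and $P \in \P$, the power $T^P$ again lies in $\textnormal{Reg}(\C,\V)$ by Proposition \ref{ficclose}. Hence $\V_0(P, \textnormal{Lex}(\C,\V)(r_F, T)) \cong \textnormal{Lex}(\C,\V)_0(r_F, T^P)$ is surjective for every $P \in \P$, and by Proposition \ref{locallyproj} this means $\textnormal{Lex}(\C,\V)(r_F, T)$ is a regular epimorphism in $\V$, as required. For the regular-monomorphism claim, I would first note that each $\C(g,-)$ with $g$ a regular epi in $\C$ is itself a regular monomorphism in $\textnormal{Lex}(\C,\V)$: if $(p_1, p_2)$ is the kernel pair of $g$, then $g$ is the enriched coequalizer of $(p_1, p_2)$, and the enriched Yoneda lemma turns this into an equalizer diagram $\C(g,-) = \mathrm{eq}(\C(p_1,-), \C(p_2,-))$. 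Thus every map of $\M_0$ is a regular monomorphism.

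The main obstacle is then showing that this regular-mono property is preserved by the operations of the small object argument. Coregularity of $\textnormal{Lex}(\C,\V)$ (Lemma \ref{coreg}) gives pushout stability of regular monos and closure under projective copowers, while closure of regular monos under coproducts and $\omega$-filtered colimits is a standard property of the locally finitely presentable coregular category $\textnormal{Lex}(\C,\V)$. Together these imply that the transfinite composite of pushouts of coproducts of morphisms in $\M_0$ producing $r_F$ is a regular monomorphism, completing the proof.
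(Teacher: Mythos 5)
Your proof is correct and follows essentially the same route as the paper's: definability via Remark \ref{regdef}, ordinary weak reflections from the small object argument (Theorem \ref{wr}) applied to $\M_0$, the upgrade to enriched weak reflections via projective powers and Proposition \ref{locallyproj}, and the regular-monomorphism claim via coregularity (Lemma \ref{coreg}) together with stability of regular monomorphisms under pushouts and filtered colimits in the locally finitely presentable category $\textnormal{Lex}(\C,\V)$. The only difference is that you spell out a few steps the paper leaves implicit (for instance, that each $\C(g,-)$ is a regular monomorphism by Yoneda), which is harmless.
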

\begin{proof}
	$\D:=\text{Reg}(\C,\V)$ is definable and weakly reflective in $\L:=\text{Lex}(\C,\V)$ by Remark \ref{regdef} and Theorem \ref{inj-char}. The class defining $\D$ as a finite injectivity class in $\L$ is given by $\M:=\{\C(h,-)\ |\ h \text{ regular epimorphism in } \C\}$; hence $\D_0$ is defined by $$\M_0:=\{P\cdot \C(h,-)\ |\ h \text{ regular epimorphism in } \C,\ P\in\P \}.$$
	Moreover the weak reflections can be chosen to be in the closure of $\M_0$ under transfinite composition and pushouts, as explained in Theorem \ref{wr} (this gives ordinary weak reflections, but using projective powers it's easy to see that they are actually enriched).
	Now, since $\L$ is coregular by Lemma \ref{coreg}, the elements $P\cdot\C(h,-)$ in the class defining $\D_0$ are regular monomorphisms. Moreover, the fact that $\L$ is \lfp\ (since $\C$ is finitely complete) and coregular, implies that filtered colimits commute in $\L$ with finite limits, and regular monomorphisms are stable under pushouts. Thus, by Theorem \ref{wr}, our weak reflections can actually be chosen to be regular monomorphisms. 
\end{proof}

This allows us to prove an enriched version of Barr's Embedding Theorem. In the following result $[\textnormal{Reg}(\C,\V),\V]$ will not in general be a $\V$-category, since the limits defining the hom-objects are too large to exist in $\V$. It will nonetheless be a $\V'$-category for some enlargement $\V'$ of $\V$ --- see \cite[Section~2.6]{Kel82:libro}. When we assert that $\textnormal{ev}_{\C}:\C\to[\textnormal{Reg}(\C,\V),\V]$ is regular, this should be understood to mean that it sends finite limits to pointwise ones, and similarly for coequalizers of kernel pairs. The image of the fully faithful functor $\textnormal{ev}_{\C}:\C\to[\textnormal{Reg}(\C,\V),\V]$ will of course be a $\V$-category.

\begin{teo}[Barr's Embedding]\label{Barr}
	Let $\C$ be a small regular $\V$-category; then the evaluation functor $\textnormal{ev}_{\C}:\C\to[\textnormal{Reg}(\C,\V),\V]$ is fully faithful and regular.
\end{teo}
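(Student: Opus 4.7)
The strategy is to recognise $\textnormal{ev}_{\C}$ as a composite of two fully faithful $\V$-functors, and then to read off regularity directly from the definitions. Concretely, I would factor $\textnormal{ev}_{\C}$ as
\[
\C \xrightarrow{Y^{op}} \textnormal{Lex}(\C,\V)^{op} \xrightarrow{\textnormal{Lex}(\C,\V)(-,J-)} [\textnormal{Reg}(\C,\V),\V],
\]
where $Y:\C^{op}\to\textnormal{Lex}(\C,\V)$ is the (corestricted) Yoneda embedding $C\mapsto\C(C,-)$, which lands in $\textnormal{Lex}(\C,\V)$ because representables preserve limits, and $J:\textnormal{Reg}(\C,\V)\hookrightarrow\textnormal{Lex}(\C,\V)$ is the inclusion. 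By the enriched Yoneda lemma $\textnormal{Lex}(\C,\V)(\C(C,-),JF)\cong FC$ naturally in $F\in\textnormal{Reg}(\C,\V)$, so this composite is indeed $\textnormal{ev}_{\C}$.

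For fully faithfulness I would argue as follows. The Yoneda embedding $Y$ (and hence $Y^{op}$) is fully faithful. By Lemma \ref{regfic}, $J$ is weakly reflective with weak reflections chosen to be regular monomorphisms; and $\textnormal{Lex}(\C,\V)$, being \lfp\ (as $\C$ is finitely complete), in particular admits coequalizers of kernel pairs. Therefore the hypotheses of Proposition \ref{codense} are satisfied, and the functor $\textnormal{Lex}(\C,\V)(-,J-)$ is fully faithful. Composing the two gives fully faithfulness of $\textnormal{ev}_{\C}$.

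For regularity, both required properties are essentially tautological. If $\{W,D\}$ is a finite weighted limit in $\C$, then for every $F\in\textnormal{Reg}(\C,\V)$ the object $\textnormal{ev}_{\C}(\{W,D\})_F=F\{W,D\}$ is isomorphic to $\{W,FD\}$ since $F$ preserves finite limits; so $\textnormal{ev}_{\C}$ sends finite weighted limits to pointwise ones. Similarly, if $h$ is a regular epimorphism in $\C$ then $\textnormal{ev}_{\C}(h)_F=Fh$ is a regular epimorphism in $\V$ for every $F\in\textnormal{Reg}(\C,\V)$, since $F$ is regular; in particular $\textnormal{ev}_{\C}$ sends coequalizers of kernel pairs to pointwise regular epimorphisms. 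The substantive content has already been packaged into Lemmas \ref{coreg} and \ref{regfic} together with Proposition \ref{codense}, so no real obstacle remains at this stage; the only verification to handle with care is that the codensity of $\textnormal{Reg}(\C,\V)$ in $\textnormal{Lex}(\C,\V)$ is genuinely equivalent, via the Yoneda identification above, to fully faithfulness of $\textnormal{ev}_{\C}$.
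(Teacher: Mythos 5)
Your proposal is correct and follows essentially the same route as the paper: the same factorization of $\textnormal{ev}_{\C}$ through the restricted Yoneda embedding and $\textnormal{Lex}(\C,\V)(-,J-)$, with fully faithfulness of the second factor obtained from Lemma \ref{regfic} and the codensity result of Proposition \ref{codense}. Your explicit verification of the (tautological) regularity clause is a welcome addition that the paper only handles in the remarks preceding the theorem.
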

\begin{proof}
	Let $\D=\text{Reg}(\C,\V)$ and $\L=\text{Lex}(\C,\V)$ as before. It follows by the previous Lemma that $\D$ is a weakly reflective subcategory of $\L$ and the weak reflections can be chosen to be regular monomorphisms. Hence we can apply Proposition \ref{codense} to deduce that $\D$ is codense in $\L$. To conclude, note that the functor $\text{ev}_{\C}$ is given by the composite of the restricted Yoneda embedding $Y:\C\to\L^{op}$ and of $\widehat{J}:\L^{op}\to [\D,\V]=[\text{Reg}(\C,\V),\V]$ where $\widehat{J}=\L(1,J)$ ($J$ being the inclusion of $\D$ in $\L$); the first is always fully faithful and the second is so because $\D$ is codense in $\L$. Hence $\text{ev}_{\C}$ is fully faithful.
\end{proof}

Similarly, for each small regular category $\C$, we can find a regular embedding of $\C$ into a category of presheaves over a small base:

\begin{teo}\label{functembedd}
	Let $\C$ be a small regular $\V$-category. Then there exists a small $\V$-category $\A$ and a fully faithful and regular functor $F:\C\to[\A,\V]$.
\end{teo}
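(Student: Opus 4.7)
The plan is to reduce Theorem \ref{Barr} to a small base by finding a small dense subcategory $\A\subseteq\textnormal{Reg}(\C,\V)$ and showing that the restriction of $\textnormal{ev}_{\C}$ along the inclusion $\A\hookrightarrow \textnormal{Reg}(\C,\V)$ remains fully faithful and regular.

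First I would observe that $\textnormal{Reg}(\C,\V)$ is a small injectivity class in the locally finitely presentable $\V$-category $\textnormal{Lex}(\C,\V)$, by Remark \ref{regdef}, and hence is accessible by the implication $(1)\Rightarrow(2)$ of Theorem \ref{inj-char}. Consequently there is a regular cardinal $\lambda$ such that the full subcategory $\A\subseteq \textnormal{Reg}(\C,\V)$ of $\lambda$-presentable objects is small and dense, with every regular $\V$-functor $G\colon \C\to\V$ expressible as a conical $\lambda$-filtered colimit of objects of $\A$. Define $F\colon \C\to [\A,\V]$ to be the composite of $\textnormal{ev}_{\C}$ with the restriction $[\textnormal{Reg}(\C,\V),\V]\to[\A,\V]$, so that $F(C)(A)=A(C)$ for $C\in\C$ and $A\in\A$.

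Regularity of $F$ is immediate, since finite weighted limits and regular epimorphisms in $[\A,\V]$ are computed pointwise and each $A\in\A\subseteq\textnormal{Reg}(\C,\V)$ is by definition a regular $\V$-functor $\C\to\V$. For fully faithfulness, the key point is that each representable evaluation $\textnormal{ev}_{\C}(C)\colon \textnormal{Reg}(\C,\V)\to\V$ sending $G\mapsto G(C)$ preserves conical filtered colimits, because such colimits in $\textnormal{Reg}(\C,\V)$ are computed pointwise inside $[\C,\V]$ (Proposition \ref{ficclose}). Since $\A$ is dense via $\lambda$-filtered colimits and both $\textnormal{ev}_{\C}(C)$ and $\textnormal{ev}_{\C}(D)$ preserve these colimits, every $\V$-natural transformation between the restrictions $F(C)$ and $F(D)$ extends uniquely to all of $\textnormal{Reg}(\C,\V)$, so that
$$[\textnormal{Reg}(\C,\V),\V](\textnormal{ev}_{\C}(C),\textnormal{ev}_{\C}(D))\cong [\A,\V](F(C),F(D)).$$
Combined with fully faithfulness of $\textnormal{ev}_{\C}$ given by Theorem \ref{Barr}, this yields fully faithfulness of $F$.

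The main obstacle is the enriched density step: one must verify that the accessibility of $\textnormal{Reg}(\C,\V)$ produced by Theorem \ref{inj-char} really supplies a small $\V$-dense subcategory, and that the colimit presentations coming from density are preserved by the evaluation functors in the fully enriched sense rather than only the underlying ordinary one. Once that is in place, the rest of the argument is a formal extension-along-a-dense-inclusion calculation.
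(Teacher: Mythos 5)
Your strategy is genuinely different from the paper's. You restrict $\textnormal{ev}_{\C}$ along the inclusion of the full subcategory $\A$ of $\lambda$-presentable objects of $\textnormal{Reg}(\C,\V)$, and argue full faithfulness via accessibility and density. The paper instead builds a small codense subcategory by hand: for each representable $L=\C(C,-)$ in $\textnormal{Lex}(\C,\V)$ it chooses a single codensity presentation $L\rightarrowtail S_L\rightrightarrows T_L$ with $S_L,T_L$ regular functors (obtained from a weak reflection, its cokernel pair, and a further weak reflection, exactly as in Proposition \ref{codense}), takes $\A$ to be the resulting small set of regular functors, and deduces full faithfulness of $F=\B(1,J)\circ Y'$ from codensity of $\A$ in the small category $\B$ of representables-plus-chosen-objects. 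Your treatment of regularity (pointwise, since each $A\in\A$ is a regular functor) agrees with the paper's.

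The genuine gap is the one you yourself flag and do not close: the step from ``$\textnormal{Reg}(\C,\V)$ is accessible, hence generated under conical $\lambda$-filtered colimits by a set of $\lambda$-presentables'' to ``$\A$ is $\V$-dense and $\textnormal{ev}_{\C}(C)\cong\textnormal{Lan}_{J}(\textnormal{ev}_{\C}(C)J)$''. The paper's notion of accessible $\V$-category only provides conical $\lambda$-filtered colimits and generation, which is strictly weaker than enriched density: density requires that each $G$ be the $\D(J-,G)$-weighted colimit of $J$, and for that weighted colimit to exist and reduce to a conical $\lambda$-filtered colimit one must know that the weight $\D(J-,G)$ is $\lambda$-flat and that $\lambda$-flat weights over a symmetric monoidal finitary quasivariety are exactly $\lambda$-filtered colimits of representables. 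None of this machinery is developed or cited in the paper, and without it the asserted isomorphism $[\textnormal{Reg}(\C,\V),\V](\textnormal{ev}_{\C}(C),\textnormal{ev}_{\C}(D))\cong[\A,\V](F(C),F(D))$ is only justified at the level of underlying ordinary naturality, not as an isomorphism of hom-objects (which moreover live in an enlargement $\V'$ on the left). The argument is probably completable along these lines, but as it stands the key step is asserted rather than proved; the paper's construction avoids the issue entirely by arranging, for each object of $\C$, an explicit equalizer presentation by two regular functors, so that full faithfulness follows from the elementary codensity computation of Proposition \ref{codense} rather than from enriched accessibility theory.
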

\begin{proof}
	Let $Y:\C\to\L^{op}=\text{Lex}(\C,\V)^{op}$ be the codomain restriction of the Yoneda embedding. For each representable functor $L=\C(C,-)$ in $\L$ we can consider an equalizer
	\begin{center}
		
		\begin{tikzpicture}[baseline=(current  bounding  box.south), scale=2]
		
		\node (a) at (-0.3,0) {$L$};
		\node (b) at (0.6,0) {$S_L$};
		\node (c) at (1.6,0) {$T_L$};

		\path[font=\scriptsize]
		
		(a) edge [>->] node [above] {$s$} (b)
		([yshift=1.5pt]b.east) edge [->] node [above] {$u$} ([yshift=1.5pt]c.west)
		([yshift=-1.5pt]b.east) edge [->] node [below] {$v$} ([yshift=-1.5pt]c.west);
		\end{tikzpicture}
	\end{center}
	where $S_L$ and $T_L$ are in $\text{Reg}(\C,\V)$, and $s$ is a weak reflection of $L$ into $\text{Reg}(\C,\V)$ (take $u',v':S_L\to M$ to be the cokernel pair of $s$, then define $u=s'\circ u'$ and $v=s'\circ v'$ where $s'$ is a weak reflection of $M$). Consider then the full subcategory $\B$ of $\L$ given by the representable functors and, for each of them, two regular functors $S_L$ and $T_L$ with the property just described. Let $\A\subset \B$ be the full subcategory consisting of all regular functors in $\B$. Then $\A$ is weakly reflective in $\B$ and the weak reflection can be chosen to be regular monomorphisms (if $B\in\B$ is representable, then this is true by construction; if $B$ is one of the new objects, then $B\in\A$ and the identity map is a weak reflection). By construction $\A$ and $\B$ are small categories, and, applying the same proof of Proposition \ref{codense},we obtain that $\A$ is codense in $\B$.
	Write $Y':\C\to\B^{op}$ for the codomain restriction of $Y$; then we can consider the functor $F:\C\to[\A,\V]$ defined as the composite
	\begin{center}
		
		\begin{tikzpicture}[baseline=(current  bounding  box.south), scale=2]
		
		\node (a) at (-0.3,0) {$\C$};
		\node (b) at (0.6,0) {$\B^{op}$};
		\node (c) at (1.8,0) {$[\A,\V]$};

		\path[font=\scriptsize]
		
		(a) edge [->] node [above] {$Y'$} (b)
		(b) edge [->] node [above] {$\B(1,J)$} (c);
		\end{tikzpicture}
	\end{center}
	where $J:\A\to\B$ is the inclusion. $F$ turns out to be just the evaluation functor restricted to $\A$; thence, since $\A\subseteq \text{Reg}(\C,\V)$, the functor $F$ is regular too. Finally $F$ is fully faithful because $Y'$ is, and $\A$ is codense in $\B$.
\end{proof}

\section{Makkai's Image Theorem}\label{MakIm}

Given any regular $\V$-category $\C$ we can consider the fully faithful functor
$$\textnormal{ev}_{\C}:\C\to[\textnormal{Reg}(\C,\V),\V]$$
given by Theorem \ref{Barr}. Moreover $\textnormal{Reg}(\C,\V)$ is closed in $[\C,\V]$ under products, projective powers and filtered colimits: this follows from Proposition \ref{ficclose} plus the fact that $\text{Lex}(\C,\V)$ is closed in $[\C,\V]$ under the same limits and colimits.  

It's then easy to see that the essential image of $\textnormal{ev}_{\C}$ is contained in $\textnormal{DEF}(\textnormal{Reg}(\B,\V),\V)$. We are going to see that, if $\C$ is moreover exact, this will actually be the essential image of $\textnormal{ev}_{\C}$.

The case $\V=\bo{Set}$ of the following Lemma appears as the first part of 3.2.2 in \cite{KR18:articolo}.

\begin{lema}\label{regepi}
	Let $\L$ be a \lfp\ $\V$-category and $J:\D\hookrightarrow\L$ an enriched finite injectivity class of $\L$. Given $F\in\textnormal{DEF}(\D,\V)$, suppose that for each $L\in\D$ and $x:I\to FL$ there exist $A\in\L_f$ and $\eta:\L(A,J-)\to F$ such that $x$ factors through $\eta_L$. Then there is a $B\in\L_f$ and a pointwise regular epimorphism $\L(B,J-)\twoheadrightarrow F$.
\end{lema}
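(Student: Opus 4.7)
The plan is first to extend the given hypothesis from $I$-elements to $P$-elements for every $P\in\P$, and then to consolidate the resulting family of witnesses into a single representable.

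For the extension, given any $L\in\D$, $P\in\P$, and $y:P\to FL$ in $\V_0$, I would use that $\D$ is closed under projective powers in $\L$ (Proposition~\ref{ficclose}), so that $L^P\in\D$, together with the fact that $F$ preserves projective powers, to obtain $F(L^P)\cong[P,FL]$. The transpose $\tilde y:I\to F(L^P)$ of $y$ then falls under the given hypothesis, yielding $A\in\L_f$ and $\eta:\L(A,J-)\to F$ such that $\tilde y$ factors through $\eta_{L^P}$. Since $\eta$ is $\V$-natural and $\L(A,JL^P)=\L(A,(JL)^P)\cong[P,\L(A,JL)]$, the component $\eta_{L^P}$ identifies with $[P,\eta_L]$; transposing the factorization back shows that $y$ factors through $\eta_L:\L(A,JL)\to FL$. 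Combined with Proposition~\ref{locallyproj}, the conclusion reduces to producing a single $B\in\L_f$ and $\eta:\L(B,J-)\to F$ such that every $P$-element of every $FL$ factors through $\eta_L$.

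A first candidate is the pointwise coproduct $\coprod_{i}\L(A_i,J-)\to F$ in $[\D,\V]$, indexed over a small representative set of pairs $(A_i,\eta_i)$ provided by the extended hypothesis; this is pointwise a regular epimorphism. The main obstacle, however, is that such a coproduct is not itself of the form $\L(B,J-)$: contravariance in the first argument converts coproducts of $A_i$'s into products $\prod_i\L(A_i,J-)$ rather than coproducts, so the naive choice $B=\coprod_i A_i$ does not work.

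To overcome this, I would exploit the definability of $F$—specifically, its preservation of filtered colimits. Passing to the right Kan extension $G:=\mathrm{Ran}_J F:\L\to\V$, which restricts back to $F$ along the fully faithful inclusion $J$, reformulates the problem: natural transformations $\L(A,J-)\to F$ correspond by an enriched Yoneda-type argument to elements of $GA$, so the extended hypothesis becomes a pointwise generation statement for $G$ at objects of $\D$. The desired conclusion then amounts to producing a single element $e\in GB$ with $B\in\L_f$ that induces a pointwise regular epimorphism $\L(B,J-)\twoheadrightarrow F$. I expect this to follow from $G$ preserving filtered colimits on $\L$ (inherited from $F$) together with the local finite presentability of $\L$, which should allow the witnessing family of $\L(A_i,J-)$'s to be consolidated through a single representative $B\in\L_f$.
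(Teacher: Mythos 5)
The reduction in your first paragraph (from $I$-elements to $P$-elements via projective powers, and the observation that it suffices to hit every $P$-element) is sound and matches a device the paper also uses. But the heart of the lemma is the consolidation step, and there your argument has a genuine gap. You correctly identify the obstacle --- contravariance turns coproducts of the $A_i$ into products of the representables, so $\coprod_i\L(A_i,J-)$ is not of the form $\L(B,J-)$ --- but the proposed workaround does not resolve it. Preservation of filtered colimits by $G=\mathrm{Ran}_JF$ gives you no mechanism for merging a \emph{family} of elements $e_i\in GA_i$ into a single element of $GB$ for a single finitely presentable $B$: a filtered colimit of the $A_i$ in $\L$ is generally not finitely presentable, and in any case an element of $G(\mathrm{colim}\,A_i)$ would correspond to a map out of $\L(\mathrm{colim}\,A_i,J-)\cong\lim\L(A_i,J-)$, which points the wrong way. (In the paper, filtered-colimit preservation of $\mathrm{Ran}_JF$ is used \emph{before} this lemma is applied, to verify its hypothesis in Proposition~\ref{gencoeq}; it plays no role in the consolidation.)

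The missing idea is to apply the hypothesis only \emph{once}, at a single cleverly chosen object of $\D$, rather than once per element. First invoke Theorem~\ref{inj-char} to see that $\D$ is $\lambda$-accessible for some $\lambda$, so it has a small generating set $\D_\lambda$ of $\lambda$-presentable objects. For each $S\in\D_\lambda$ choose a regular epimorphism $\bar x_S:P_S\twoheadrightarrow FS$ with $P_S$ projective (enough projectives in $\V_0$), transpose it to $x_S:I\to F(S^{P_S})$, and form the single object $\widehat S=\prod_{S\in\D_\lambda}S^{P_S}$, which lies in $\D$ by closure under products and projective powers. Since $F$ preserves these, the $x_S$ assemble into one element $x:I\to F\widehat S$; the hypothesis applied to this one element yields a single $A\in\L_f$ and $\eta:\L(A,J-)\to F$ through which $x$ factors. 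Projecting and transposing shows $\bar x_S$ factors through $\eta_S$, so $\eta_S$ is a regular epimorphism for every $S\in\D_\lambda$, and the case of a general $T\in\D$ follows by writing $T$ as a $\lambda$-filtered colimit of objects of $\D_\lambda$, which both $\L(A,J-)$ and $F$ preserve. Your proposal also never addresses why a \emph{small} witnessing family suffices (the class of all $P$-elements of all $FL$ for $L$ ranging over $\D$ is not small); this, too, is handled by the accessibility of $\D$ and the restriction to $\D_\lambda$.
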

\begin{proof}
	By Theorem \ref{inj-char} $\D$ is an accessible category. Consider then a regular cardinal $\lambda$ such that $\D$ is $\lambda$-accessible, and denote by $\D_{\lambda}$ the full subcategory of $\lambda$-presentable objects in $\D$.
	For each $S\in\D_{\lambda}$ take $P_S\in\V_{0p}$ and a regular epimorphism $\bar{x}_S:P_S\twoheadrightarrow FS$; this corresponds to an arrow $x_S:I\to (FS)^{P_S}\cong F(S^{P_S})$.\\
	Define $\widehat{S}:=\prod_{S\in\D_{\lambda}}S^{P_S}$ with projection maps $\pi_S:\widehat{S}\to S^{P_S}$; then $\widehat{S}\in\D$ and, since $F$ preserves products and projective powers, $F(\widehat{S})\cong\prod_{S\in\D_{\lambda}}F(S)^{P_S}$. Consider then $x:I\to F\widehat{S}$ with components $x_S:I\to F(S)^{P_S}$ for each $S\in\D_{\lambda}$. By our assumptions there exist $A\in\L_f$ and a natural transformation $\eta:\L(A,J-)\to F$ such that $x=\eta_{\widehat{S}}\circ y$ for some $y:I\to\L(A,\widehat{S})$.	For each $S\in\D_{\lambda}$ we can consider the following diagram:
	\begin{center}
		\begin{tikzpicture}[baseline=(current  bounding  box.south), scale=2]
		
		\node (0) at (0.7,1.5) {$I$};
		\node (a) at (0,0.8) {$\L(A,\widehat{S})$};
		\node (b) at (1.4,0.8) {$F\widehat{S}$};
		\node (c) at (0, 0) {$\L(A,S^{P_S})$};
		\node (d) at (1.4, 0) {$F(S^{P_S})$};
		\node (e) at (0, -0.8) {$\L(A,S)^{P_S}$};
		\node (f) at (1.4, -0.8) {$F(S)^{P_S}$};
		
		\path[font=\scriptsize]
		(0) edge [->] node [above] {$y$} (a)
		(0) edge [->] node [above] {$x$} (b)
		(a) edge [->] node [above] {$\eta_{\widehat{S}}$} (b)
		(b) edge [->] node [right] {$F\pi_S$} (d)
		(a) edge [->] node [left] {$\L(A,\pi_S)$} (c)
		(c) edge [->] node [below] {$\eta_{(S^{P_S})}$} (d)
		(c) edge [->] node [left] {$\cong$} (e)
		(d) edge [->] node [right] {$\cong$} (f)
		(e) edge [->] node [below] {$(\eta_S)^{P_S}$} (f);
		\end{tikzpicture}	
	\end{center}
	Transposing the vertical arrows then we obtain maps $\bar{y}_S:P_S\to\L(A,S)$ such that the diagram
	\begin{center}
		\begin{tikzpicture}[baseline=(current  bounding  box.south), scale=2]
		
		\node (0) at (0.6,1.5) {$P_S$};
		\node (a) at (0,0.8) {$\L(A,S)$};
		\node (b) at (1.2,0.8) {$FS$};

		\path[font=\scriptsize]
		(0) edge [->] node [left] {$\bar{y}_S$} (a)
		(0) edge [->>] node [right] {$\bar{x}_S$} (b)
		(a) edge [->] node [above] {$\eta_S$} (b);
		\end{tikzpicture}	
	\end{center}
	commutes.\\
	Since $\bar{x}_S$ is a regular epimorphism, $\eta_S$ is a regular epimorphism too for each $S\in\D_{\lambda}$ (remember that $\V_0$ is regular by Proposition \ref{locallyproj}), but $\D_{\lambda}$ generates $\D$ under $\lambda$-filtered colimits and $\L(A,J-)$ and $F$ preserve them; it follows then that $\eta_T$ is a regular epimorphism for each $T$ in $\D$.
\end{proof}

$ $

\begin{lema}
	Let $\C$ be a small regular $\V$-category, $\L=\textnormal{Lex}(\C,\V)$ and $\D=\textnormal{Reg}(\C,\V)$; denote by $J:\D\hookrightarrow\L$ be the inclusion. Then for any functor $F:\D\to\V$ preserving filtered colimits, the right Kan extension $\textnormal{Ran}_JF:\L\to\V$ preserves filtered colimits too.
\end{lema}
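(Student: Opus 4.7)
My strategy is to identify $\textnormal{Ran}_J F$ at each $L \in \L$ with an equalizer of $F$ applied to a natural pair of arrows in $\D$, and then to conclude by finitarity.

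First, I would build a functorial equalizer presentation of each $L \in \L$ by objects of $\D$. Applying Theorem \ref{wr} to the set $\M_0 = \{P \cdot \C(h,-) \mid P \in \P,\ h \text{ a regular epimorphism in } \C\}$ appearing in Lemma \ref{regfic}---whose members have finitely presentable domain and codomain in $\L = \textnormal{Lex}(\C,\V)$, since representables are finitely presentable in $\L$ and $P \in (\V_0)_{pf}$---produces a finitary endofunctor $R \colon \L \to \L$ and a natural transformation $r \colon 1_\L \to R$ whose components $r_L \colon L \to RL$ are enriched weak reflections of $L$ into $\D$, and may be taken (by Lemma \ref{regfic}) to be regular monomorphisms. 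Writing $u_L, v_L \colon RL \rightrightarrows M_L$ for the cokernel pair of $r_L$ and setting $S_L := RL$, $T_L := R(M_L)$, the two composites $r_{M_L} u_L,\ r_{M_L} v_L \colon S_L \rightrightarrows T_L$ exhibit $L$ as their conical equalizer, exactly as in the proof of Proposition \ref{codense}; moreover this presentation is $J$-absolute, i.e.\ for every $D \in \D$ the functor $\L(-,JD)$ carries it to the reflexive coequalizer $\L(T_L,JD) \rightrightarrows \L(S_L,JD) \twoheadrightarrow \L(L,JD)$. Both $S$ and $T = R \circ M$ are finitary endofunctors of $\L$, since $R$ is finitary and $M$ is built from $r$ by a finite colimit.

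Next, I would derive the formula
\[
(\textnormal{Ran}_J F)(L) \;\cong\; \textnormal{eq}\bigl(FS_L \rightrightarrows FT_L\bigr),
\]
natural in $L$. Starting from the end formula $(\textnormal{Ran}_J F)(L) = \int_{D\in\D}[\L(L,JD),FD]$, I apply $[-,FD]$ to the coequalizer above to obtain an equalizer in $\V$, take the end over $D$ (which commutes with equalizers), and use the enriched Yoneda lemma to identify $\int_D [\L(S_L,JD), FD] \cong FS_L$ and similarly for $T_L$, since $S_L, T_L \in \D$ and $J$ is fully faithful.

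Finally, for a filtered colimit $L = \textnormal{colim}_i L_i$ in $\L$, finitarity of $S$ and $T$ yields $S_L \cong \textnormal{colim}_i S_{L_i}$ and $T_L \cong \textnormal{colim}_i T_{L_i}$; since $F$ preserves filtered colimits, $FS_L \cong \textnormal{colim}_i FS_{L_i}$ and $FT_L \cong \textnormal{colim}_i FT_{L_i}$. Because $\V$ is locally finitely presentable, filtered colimits commute with finite limits in $\V$, so
\[
(\textnormal{Ran}_J F)(L) \;\cong\; \textnormal{eq}\bigl(\textnormal{colim}_i FS_{L_i} \rightrightarrows \textnormal{colim}_i FT_{L_i}\bigr) \;\cong\; \textnormal{colim}_i\,(\textnormal{Ran}_J F)(L_i),
\]
as required. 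I expect the main obstacle to lie in the first step: ensuring that the endofunctor $R$ produced by the $\bo{Set}$-level small object argument of Theorem \ref{wr} does deliver \emph{enriched} weak reflections---i.e.\ that each $\L(r_L, D)$ is a regular epimorphism in $\V$, not merely that $\L_0(r_L, D)$ is surjective. This is precisely why one must work with the enlarged class $\M_0$, containing all projective copowers of the original generators; once this is settled, the remainder of the argument is formal.
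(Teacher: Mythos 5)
Your proposal is correct and follows essentially the same route as the paper: both exhibit each $L$ as a $J$-absolute equalizer $L\to RL\rightrightarrows R(M_L)$ built from the finitary weak-reflection endofunctor $R$ of Theorem \ref{wr} (with the reflections chosen to be regular monomorphisms as in Lemma \ref{regfic}), identify $(\textnormal{Ran}_JF)(L)$ with the equalizer of $FS_L\rightrightarrows FT_L$ (the paper via Kelly's Theorem~5.29, you via the end formula --- the same computation), and conclude by commuting the finite limit with filtered colimits in the \lfp\ category $\V$. The enrichment issue you flag at the end is exactly the point the paper disposes of in Lemma \ref{regfic} using projective powers and the enlarged class $\M_0$.
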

\begin{proof}
	First we prove that $\textnormal{Ran}_JF$ preserves some particular limits. For each $L\in\L$ fix a codensity presentation as in Proposition \ref{codense}:
	\begin{center}
		
		\begin{tikzpicture}[baseline=(current  bounding  box.south), scale=2]
		
		\node (a) at (-0.3,0) {$L$};
		\node (b) at (0.6,0) {$S$};
		\node (c) at (1.6,0) {$M$};
		\node (d) at (2.5,0) {$T$};

		\path[font=\scriptsize]
		
		(a) edge [>->] node [above] {$r$} (b)
		([yshift=1.5pt]b.east) edge [->] node [above] {$u$} ([yshift=1.5pt]c.west)
		([yshift=-1.5pt]b.east) edge [->] node [below] {$v$} ([yshift=-1.5pt]c.west)
		(c) edge [>->] node [above] {$t$} (d);
		\end{tikzpicture}
	\end{center}
	where $r:L\rightarrowtail S$ is a weak reflection of $L$ in $\D$ and a regular monomorphism (Lemma \ref{regfic}), $u,v:S\to M$ is the cokernel pair of $r$, and $t:M\rightarrowtail T$ a weak reflection associated to $M$ (which again we suppose to be a regular monomorphism). Then $t\circ u$ and $t\circ v$ define $L$ as an equalizer of elements from $\D$. By \cite[Theorem~5.29]{Kel82:libro}, $\textnormal{Ran}_JF$ preserves these equalizers. In particular, since $(\text{Ran}_JF)\circ J\cong F$, the object $\textnormal{Ran}_JF(L)$ is defined as the equalizer
	
	\begin{center}
		
		\begin{tikzpicture}[baseline=(current  bounding  box.south), scale=2]
		
		\node (a) at (-0.9,0) {$\text{Ran}_JF(L)$};
		\node (b) at (0.4,0) {$F(S)$};
		\node (c) at (1.9,0) {$F(T)$};
		
		\path[font=\scriptsize]
		
		(a) edge [>->] node [above] {} (b)
		([yshift=1.5pt]b.east) edge [->] node [above] {$F(t\circ u)$} ([yshift=1.5pt]c.west)
		([yshift=-1.5pt]b.east) edge [->] node [below] {$F(t\circ v)$} ([yshift=-1.5pt]c.west);
		\end{tikzpicture}
	\end{center}
	
	To prove that $\text{Ran}_JF$ is finitary, consider the natural transformation $r:1_{\L}\to R$ given by Theorem \ref{wr}, so that each component $r_L:L\to RL$ is a regular monomorphism and a weak reflection of $L$ into $\D$. Form now the cokernel pair $u,v:R\to S$ of $r$; then $S$ is a pushout of finitary functors, hence finitary.  Moreover we can see $r$ as the equalizer
	\begin{center}
		
		\begin{tikzpicture}[baseline=(current  bounding  box.south), scale=2]
		
		\node (a) at (-0.2,0) {$1_{\L}$};
		\node (b) at (0.6,0) {$R$};
		\node (c) at (1.5,0) {$RS$};

		\path[font=\scriptsize]
		
		(a) edge [>->] node [above] {$r$} (b)
		([yshift=1.5pt]b.east) edge [->] node [above] {$rS\circ u$} ([yshift=1.5pt]c.west)
		([yshift=-1.5pt]b.east) edge [->] node [below] {$rS\circ v$} ([yshift=-1.5pt]c.west);
		\end{tikzpicture}
	\end{center}
	with both $R$ and $RS$ finitary. It follows then by the initial observation that $\text{Ran}_JF$ is given by the equalizer
	\begin{center}
		
		\begin{tikzpicture}[baseline=(current  bounding  box.south), scale=2]
		
		\node (a) at (-0.5,0) {$\text{Ran}_JF$};
		\node (b) at (0.6,0) {$FR$};
		\node (c) at (1.9,0) {$FRS$};

		\path[font=\scriptsize]
		
		(a) edge [>->] node [above] {} (b)
		([yshift=1.5pt]b.east) edge [->] node [above] {$F(rS\circ u)$} ([yshift=1.5pt]c.west)
		([yshift=-1.5pt]b.east) edge [->] node [below] {$F(rS\circ v)$} ([yshift=-1.5pt]c.west);
		\end{tikzpicture}
	\end{center}
	that it is a finite limit of finitary functors into the \lfp\ $\V$, and so is itself finitary.
	
\end{proof}

\begin{prop}\label{gencoeq}
	Let $\C$ be a small regular $\V$-category and $$\textnormal{ev}_{\C}:\C\longrightarrow \textnormal{DEF}(\textnormal{Reg}(\C,\V),\V)$$ the evaluation functor. For each $F\in\textnormal{DEF}(\textnormal{Reg}(\C,\V),\V)$ there exist $A,B\in\C$ and maps $f,g:A\to B$ such that $F$ is the coequalizer:
	\begin{center}
		
		\begin{tikzpicture}[baseline=(current  bounding  box.south), scale=2]
		
		\node (b) at (-0.3,0) {$\textnormal{ev}_{\C}(A)$};
		\node (c) at (1.2,0) {$\textnormal{ev}_{\C}(B)$};
		\node (d) at (2.4,0) {$F.$};

		\path[font=\scriptsize]
		
		([yshift=1.5pt]b.east) edge [->] node [above] {$\textnormal{ev}_{\C}(f)$} ([yshift=1.5pt]c.west)
		([yshift=-1.5pt]b.east) edge [->] node [below] {$\textnormal{ev}_{\C}(g)$} ([yshift=-1.5pt]c.west)
		(c) edge [->>] node [above] {} (d);
		\end{tikzpicture}
	\end{center}
	In particular $\textnormal{DEF}(\textnormal{Reg}(\C,\V),\V)$ is a small $\V$-category.\\
	The same holds if we replace $\textnormal{Reg}(\C,\V)$ with $\textnormal{Lex}(\C,\V)$ for any $\V$-category $\C$ with finite weighted limits.
\end{prop}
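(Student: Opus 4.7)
The plan is to apply Lemma~\ref{regepi} to obtain a pointwise regular epimorphism $\textnormal{ev}_\C(C) \twoheadrightarrow F$ for some $C \in \C$, then to iterate on its kernel pair, and finally to use the full faithfulness of $\textnormal{ev}_\C$ from Theorem~\ref{Barr} to lift the resulting parallel pair back to $\C$. Throughout, set $\L = \textnormal{Lex}(\C,\V)$ and $\D = \textnormal{Reg}(\C,\V)$, with $J : \D \hookrightarrow \L$ the inclusion.

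The first step, which I expect to be the main technical hurdle, is to verify the factorization hypothesis of Lemma~\ref{regepi} for $F \in \textnormal{DEF}(\D,\V)$. I would form $\widehat{F} := \textnormal{Ran}_J F : \L \to \V$, which by the preceding lemma is finitary and satisfies $\widehat{F} \circ J \cong F$ (since $J$ is fully faithful). Given $L \in \D$ and $x : I \to FL$, I would write $L$ as a filtered colimit $\textnormal{colim}_i A_i$ of objects $A_i \in \L_f$; since $I \in (\V_0)_f$ by Definition~\ref{smfv}(2) and $FL \cong \textnormal{colim}_i \widehat{F}(A_i)$ in $\V$, the map $x$ factors through some $\widehat{F}(A_i)$ via a map $y : I \to \widehat{F}(A_i)$. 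By enriched Yoneda $y$ corresponds to a $\V$-natural transformation $\eta : \L(A_i, -) \to \widehat{F}$; restricting along $J$ and applying naturality at the coprojection $A_i \to L$ yields the required factorization of $x$ through $\eta_L : \L(A_i, JL) \to FL$.

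With this in place, Lemma~\ref{regepi} supplies $B \in \L_f$ and a pointwise regular epimorphism $\L(B, J-) \twoheadrightarrow F$. Because $\L_f$ consists of retracts of representables $\C(C, -)$, I would pick such a $C$ and precompose with the split epimorphism $\textnormal{ev}_\C(C) = \L(\C(C,-), J-) \twoheadrightarrow \L(B, J-)$ to produce a pointwise regular epi $\textnormal{ev}_\C(C) \twoheadrightarrow F$. Its kernel pair $K \rightrightarrows \textnormal{ev}_\C(C)$, computed pointwise in $[\D,\V]$, still lies in $\textnormal{DEF}(\D,\V)$ because the latter is closed in $[\D,\V]$ under finite limits: in $\V$ these commute with products, projective powers, and filtered colimits. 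Re-running the first two steps on $K$ produces $D \in \C$ and a pointwise regular epi $\textnormal{ev}_\C(D) \twoheadrightarrow K$.

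To finish, composing with the kernel pair projections gives a parallel pair $\textnormal{ev}_\C(D) \rightrightarrows \textnormal{ev}_\C(C)$, which by Theorem~\ref{Barr} is the image of a unique pair $f, g : D \to C$ in $\C$. Since $\V$ is regular, the regular epi $\textnormal{ev}_\C(C) \twoheadrightarrow F$ is the pointwise coequalizer of $K \rightrightarrows \textnormal{ev}_\C(C)$; as $\textnormal{ev}_\C(D) \twoheadrightarrow K$ is in particular an epimorphism, this agrees with the coequalizer of $\textnormal{ev}_\C(f), \textnormal{ev}_\C(g)$ in $[\D,\V]$, which remains a coequalizer in the full subcategory $\textnormal{DEF}(\D,\V)$. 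The final assertion for $\textnormal{Lex}(\C,\V)$ follows from exactly the same argument with $\D = \L$ and $J = \textnormal{id}$: in that case $\widehat{F} = F$ is finitary by hypothesis, so the first step is immediate and no Kan-extension lemma is needed.
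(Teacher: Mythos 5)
Your proposal is correct and follows essentially the same route as the paper's proof: verify the hypothesis of Lemma~\ref{regepi} via the finitary right Kan extension $\mathrm{Ran}_JF$ and the enriched Yoneda lemma, obtain a regular epimorphism from an evaluation functor, iterate on the kernel pair, and lift the parallel pair along the fully faithful $\mathrm{ev}_{\C}$. The only cosmetic difference is that you pass through retracts of representables where the paper simply invokes $\L_f\simeq\C^{op}$; this changes nothing of substance.
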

\begin{proof}
	Denote as before $\D=\textnormal{Reg}(\C,\V)$ and $\L=\textnormal{Lex}(\C,\V)$; let us first prove that the hypotheses of Lemma \ref{regepi} are satisfied. For this, consider a functor $F\in\textnormal{DEF}(\D,\V)$, $L\in\D$, and $x:I\to FL$; then write $L$ as a filtered colimit in $\L$ of finitely presentable objects $L\cong\text{colim}(A_j)$. By the previous Lemma, $G:=\text{Ran}_JF$ preserves filtered colimits, then $GL\cong\text{colim}\ G(A_j)$. Since $I$ is finitely presentable in $\V$, $x$ factors through some colimit map $G(A_j)\to GL\cong FL$; but $G(A_j)\cong[\L,\V](\L(A_j,-),G)$, hence the factorization corresponds to some $\eta:\L(A_j,-)\to G$. Its restriction $\eta J:\L(A_j,J-)\to F$ then satisfies the required property.\\
	Now, thanks to Lemma \ref{regepi}, for each $F\in\textnormal{DEF}(\D,\V)$ there exists a regular epimorphism $\L(B,J-)\twoheadrightarrow F$ with $B\in\L_f$; but $\L_f\simeq \C^{op}$ so the last map corresponds to a regular epimorphism $\eta:\textnormal{ev}_{\C}(B)\twoheadrightarrow F$ for some $B\in\C$. Take then the kernel pair  $\alpha,\beta:F'\to\textnormal{ev}_{\C}(B)$ of $\eta$. Considering again a regular epimorphism $\gamma:\textnormal{ev}_{\C}(A)\twoheadrightarrow F'$ ($A\in\C$) and composing it with $\alpha$ and $\beta$ we obtain $F$ as the coequalizer of objects from $\C$. Finally, the maps defining $F$ as a coequalizer come from $\C$ since the functor $\textnormal{ev}_{\C}:\C\to\textnormal{DEF}(\D,\V)$ is fully faithful.
	
	The same proof applies to the case of $\textnormal{Lex}(\C,\V)$ with the only difference that there is no need to consider the right Kan extension of $F$.
\end{proof}

\begin{obs}\label{goodcoeq}
	The coequalizers defined in the previous proofs are preserved by regular functors since they are given by the composite of a coequalizer of a kernel pair and a regular epimorphism.
\end{obs}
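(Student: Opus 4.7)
The plan is to verify preservation by decomposing the coequalizer into a composite whose two constituent pieces are manifestly preserved by regular $\V$-functors. Recall from the proof of Proposition \ref{gencoeq} that $F$ arises from a regular epimorphism $\eta:\textnormal{ev}_{\C}(B)\twoheadrightarrow F$ with kernel pair $(\alpha,\beta):F'\rightrightarrows \textnormal{ev}_{\C}(B)$, together with a regular epimorphism $\gamma:\textnormal{ev}_{\C}(A)\twoheadrightarrow F'$, and the parallel pair in the coequalizer diagram is $(\alpha\gamma,\beta\gamma)$.

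The first step is to invoke the standard fact that in any regular category every regular epimorphism is the coequalizer of its own kernel pair; this depends only on the underlying ordinary regular category and so applies equally in the $\V$-enriched setting. Applied to $\eta$, it identifies $\eta$ with the coequalizer of $(\alpha,\beta)$. Since $\gamma$ is a regular epimorphism and hence an epimorphism, any map that coequalizes $(\alpha\gamma,\beta\gamma)$ automatically coequalizes $(\alpha,\beta)$ and therefore factors uniquely through $\eta$; this identifies $F$ with the coequalizer of $(\alpha\gamma,\beta\gamma)$, now exhibited as the composite of the regular epi $\gamma$ with the coequalizer of a kernel pair $\eta$, exactly as claimed.

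For the preservation assertion, let $H$ be any regular $\V$-functor whose domain contains the diagram under consideration. Left exactness of $H$ makes $(H\alpha,H\beta)$ the kernel pair of $H\eta$, while regularity of $H$ sends both $\eta$ and $\gamma$ to regular epimorphisms. Re-applying the structural fact in the codomain of $H$ shows that $H\eta$ is the coequalizer of $(H\alpha,H\beta)$; the same epi-composition argument as above then identifies $HF$ with the coequalizer of $(H\alpha\,H\gamma,\,H\beta\,H\gamma)$, which is precisely the image under $H$ of the original coequalizer diagram.

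The only mild subtlety, and therefore the main thing to be careful about, is that the diagram in question lives in a possibly large $\V$-category such as $\textnormal{DEF}(\textnormal{Reg}(\C,\V),\V)$, so one must read the word ``preserved'' in the appropriate sense (pointwise limits and coequalizers in the presheaf-like target). Once this is granted, no further calculation is required, and the identical argument covers the $\textnormal{Lex}(\C,\V)$ variant of Proposition \ref{gencoeq}.
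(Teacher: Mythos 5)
Your argument is correct and is exactly the justification the paper intends: you decompose the comparison map as the regular epimorphism $\gamma$ followed by $\eta$, which is the coequalizer of its kernel pair, and use that a regular $\V$-functor preserves kernel pairs and regular epimorphisms while the codomain's regularity makes $H\eta$ again the coequalizer of its kernel pair. The paper states this in one sentence without further proof, so your elaboration (including the epi-composition step and the pointwise reading in the large target) matches its approach.
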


Recall the following result for ordinary categories:

\begin{lema}\cite[Lemma~1.4.9]{MR77:libro}
	Suppose that $F:\B\to\C$ is a conservative, full and regular functor between ordinary regular categories, and $\B$ is exact. If for every object $C\in\C$ there are an object $B\in\B$ and a regular epimorphism $F(B)\twoheadrightarrow C$, then $F$ is an equivalence of categories.
\end{lema}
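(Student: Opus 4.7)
The plan is to verify fully faithfulness and essential surjectivity separately, the latter being the substantial part since $\B$ must supply a quotient corresponding to the given $C\in\C$.

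First I would dispatch faithfulness cheaply. If $f,g:A\to B$ in $\B$ satisfy $Ff=Fg$, form their equalizer $e:E\to A$ in $\B$. Since $F$ preserves finite limits, $Fe$ is the equalizer of $Ff=Fg$ and hence an isomorphism; then conservativity of $F$ makes $e$ an isomorphism and so $f=g$. Combined with the hypothesis that $F$ is full, this gives fully faithfulness.

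For essential surjectivity, fix $C\in\C$. Choose, by hypothesis, a regular epi $e_0:FB_0\twoheadrightarrow C$, and form its kernel pair $k_1,k_2:K\rightrightarrows FB_0$ in $\C$. Cover $K$ by a regular epi $e_1:FB_1\twoheadrightarrow K$, and set $f_i:=k_i\circ e_1:FB_1\to FB_0$. Using fullness, pick lifts $\tilde f_i:B_1\to B_0$ with $F\tilde f_i=f_i$. Now form the image factorization $B_1\twoheadrightarrow R\rightarrowtail B_0\times B_0$ of $(\tilde f_1,\tilde f_2)$ in $\B$. Since $F$ is regular it preserves image factorizations, so $FR$ is the image of $(f_1,f_2):FB_1\to FB_0\times FB_0$. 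But $(f_1,f_2)=(k_1,k_2)\circ e_1$ with $(k_1,k_2)$ already a monomorphism (as the kernel pair of $e_0$), so this composite itself is the image factorization and $FR\cong K$ as subobjects of $FB_0\times FB_0$.

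Next I need $R$ to be an equivalence relation in $\B$, so that exactness of $\B$ can be applied. Since $F$ is fully faithful and preserves finite limits, the structure maps (reflexivity $\Delta:B_0\to R$, symmetry $\sigma:R\to R$, and a transitivity factorization out of the pullback $R\times_{B_0}R$) required of $R$ transfer from those of $FR=K$: one first obtains the required map in $\C$, lifts it along $F$ via fullness, and verifies the defining equations in $\B$ by faithfulness of $F$. Thus $R$ is a kernel pair $q:B_0\twoheadrightarrow Q$ in $\B$ by exactness. Regularity of $F$ now makes $Fq:FB_0\twoheadrightarrow FQ$ a regular epi whose kernel pair is $FR\cong K$, so $Fq$ and $e_0$ are two coequalizers of the same kernel pair in $\C$; hence $FQ\cong C$, proving essential surjectivity.

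The main obstacle I anticipate is the equivalence-relation transfer in the middle step: one must invoke fullness (to produce the would-be structure maps of $R$ from those of $K=FR$) and faithfulness (to check they satisfy the correct identities) in tandem, together with the fact that $F$ preserves the pullbacks needed to state transitivity. Once $R$ is known to be an equivalence relation in $\B$, exactness finishes the argument cleanly.
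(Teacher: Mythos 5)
Your proof is correct and is essentially the standard argument for this classical result, which the paper does not prove but simply cites from Makkai--Reyes: faithfulness from conservativity via equalizers, and essential surjectivity by covering the kernel pair of $e_0$, transferring the resulting equivalence relation to $\B$ along the fully faithful lex functor $F$, and using exactness of $\B$ to realize $C$ as the coequalizer $FQ$. All the steps you flag (preservation of image factorizations by a regular functor, joint monicity of the kernel pair, and the lift of the reflexivity/symmetry/transitivity data via fullness and faithfulness) are handled correctly.
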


Then we are ready to prove the following Theorem; the unenriched version appeared originally as \cite[Theorem~5.1]{Mak90:articolo}. Another proof of the unenriched version can be found in \cite[Theorem~2.4.2]{Lur18:articolo}.

\begin{teo}[Makkai's Image Theorem]\label{Makkai}
	For any small exact $\V$-category $\B$; the evaluation map $$\textnormal{ev}_{\B}:\B\longrightarrow\textnormal{DEF}(\textnormal{Reg}(\B,\V),\V)$$ is an equivalence.
\end{teo}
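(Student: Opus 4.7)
The plan is to reduce to the underlying ordinary categories and then invoke the lemma from \cite[Lemma~1.4.9]{MR77:libro} recalled just above. Since Theorem \ref{Barr} already tells us that $\textnormal{ev}_{\B}$ is fully faithful and regular, the only thing left is essential surjectivity of $(\textnormal{ev}_{\B})_0$: a fully faithful $\V$-functor that is essentially surjective on objects is automatically a $\V$-equivalence. The intention is therefore to apply the ordinary lemma with $\B_0$ (exact by hypothesis) in place of $\B$ and $\textnormal{DEF}(\textnormal{Reg}(\B,\V),\V)_0$ in place of $\C$.

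Full faithfulness of $\textnormal{ev}_{\B}$ immediately provides conservativity and fullness of $(\textnormal{ev}_{\B})_0$, while its regularity comes directly from Theorem \ref{Barr}. The remaining ``surjectivity'' hypothesis of the lemma, namely that every $F \in \textnormal{DEF}(\textnormal{Reg}(\B,\V),\V)$ receives a regular epimorphism from some $\textnormal{ev}_{\B}(B)$, is precisely the content of Proposition \ref{gencoeq} (with Remark \ref{goodcoeq} confirming that the map in question is a genuine regular epimorphism).

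The main obstacle I expect is verifying that $\textnormal{DEF}(\textnormal{Reg}(\B,\V),\V)$ is itself a regular $\V$-category, so that the ordinary lemma actually applies. The strategy is to show that $\textnormal{DEF}$ is closed inside $[\textnormal{Reg}(\B,\V),\V]$ under the pointwise formation of finite weighted limits, coequalizers of kernel pairs, pullbacks, and projective powers, so that every relevant construction can be carried out pointwise in $\V$. Finite weighted limits and pullbacks remain definable because in $\V$ they commute with products, projective powers, and filtered colimits. For coequalizers of kernel pairs I would use that in the \fqv\ $\V$ regular epimorphisms are stable under products (Proposition \ref{enoughproj}), projective powers (Remark \ref{0p=p0}), and filtered colimits (from the proof of Proposition \ref{ficclose}), together with the fact that each of these operations commutes with the limits forming kernel pairs; as a consequence the pointwise coequalizer of a kernel pair of definable functors is again definable. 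Regular epimorphisms in $\textnormal{DEF}$ then coincide with pointwise regular epimorphisms in $\V$, and their stability under pullback and projective powers in $\textnormal{DEF}$ reduces to the corresponding pointwise stability in $\V$.

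With regularity of $\textnormal{DEF}(\textnormal{Reg}(\B,\V),\V)$ in hand, the cited lemma delivers an equivalence of underlying ordinary categories, and the remark in the first paragraph upgrades this to a $\V$-equivalence, as required.
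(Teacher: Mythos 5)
Your proposal is correct and follows essentially the same route as the paper: full faithfulness and regularity from Theorem \ref{Barr}, the covering condition from Proposition \ref{gencoeq}, and then the cited ordinary Lemma \cite[Lemma~1.4.9]{MR77:libro} applied to the underlying functor. The one point you flag as an obstacle --- regularity of $\textnormal{DEF}(\textnormal{Reg}(\B,\V),\V)$ --- is handled exactly as you sketch (closure in $[\textnormal{Reg}(\B,\V),\V]$ under pointwise finite limits and coequalizers of kernel pairs, using stability of regular epimorphisms in $\V$ under products, projective powers, and filtered colimits), though the paper only spells this out later, in Section \ref{equiv}.
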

\begin{proof}
	Since $\text{ev}_{\B}$ is fully faithful by Theorem \ref{Barr}, we only need to prove that it is essentially surjective on objects, or equivalently, that the ordinary functor $(\text{ev}_{\B})_0$ is an equivalence. Thanks to Proposition \ref{gencoeq}, for each $F\in\textnormal{DEF}(\textnormal{Reg}(\B,\V),\V)$ there are an object $C\in\B$ and a regular epimorphism $\text{ev}_{\B}(C)\twoheadrightarrow F$; hence we can apply the previous Lemma and conclude.
\end{proof}

\section{Duality for Enriched Exact Categories}\label{equiv}

Let us consider again categories enriched over a base $\V$ which is a symmetric monoidal finitary quasivariety with strong generator $\P\subseteq (\V_0)_{pf}$. 

The following definition comes, slightly modified, from \cite{JK01:articolo}; it will be useful to prove the main result of this section.

\begin{Def}\label{finact}
	Let $\K$ be an ordinary category; a {\em finite action} on $\K$ is an action of the monoidal category $\V_{0f}^{op}$ on $\K$, that is, a functor $$H:\V_{0f}^{op}\times\K\to\K,$$ denoted as $X^A:=H(A,X)$, together with two natural isomorphisms $\alpha$ and $\lambda$ with components $\alpha_{XAB}:X^{A\otimes B}\to (X^A)^B$ and $\lambda_X:A^I\to A$ satisfying the commutativity of diagrams (1.1) and (1.3) in \cite{JK01:articolo}. A finite action $(H,\alpha,\lambda)$ is called {\em closed} if for each $X\in\K$ the functor $X^{(-)}=H(-,X):\V_{0f}^{op}\to\K$ has a left $J^{op}$-adjoint $\bo{K}(-,X):\K\to\V_{0}^{op}$, where $J:\V_{0f}\to\V_{0}$ is the inclusion. In other words, if there is a functor $\bo{K}:\K^{op}\times\K\to\V_{0}$ such that 
	$$ \K(Y,X^A)\cong\V_0(A,\bo{K}(Y,X)) $$
	naturally in $X,Y\in\K$ and $A\in\V_{0f}^{op}$ (equivalently, $\K(Y,X^{(-)}):\V_{0f}^{op}\to\bo{Set}$ preserves finite limits).
\end{Def}

Then we obtain:

\begin{prop}[Appendix of \cite{JK01:articolo}]\label{finpow}
	The forgetful functor induces a biequivalence between:\begin{itemize}
		\item the 2-category of finitely powered $\V$-categories, $\V$-functors which preserve these powers, and $\V$-natural transformations;
		\item the 2-category of ordinary categories with a closed finite action, functors preserving the actions up to coherent natural isomorphism, and natural transformations which are compatible with the action. 
	\end{itemize}
\end{prop}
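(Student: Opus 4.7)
The plan is to construct an explicit pseudo-inverse to the forgetful 2-functor, since the forgetful direction itself is immediate. Given a finitely powered $\V$-category $\C$, its underlying category $\C_0$ is equipped with the finite action $(A, X) \mapsto X^A$ for $A \in \V_{0f}$; the coherence isomorphisms $\alpha$ and $\lambda$ are the canonical ones $X^{A \otimes B} \cong (X^A)^B$ and $X^I \cong X$ coming from the definition of powers, and closedness is witnessed by $\mathbf{K}(X,Y) := \C(X,Y)$ via
\[
\C_0(Y, X^A) = \V_0(I, \C(Y, X^A)) \cong \V_0(I, [A, \C(Y, X)]) \cong \V_0(A, \C(Y, X)).
\]
The extension to $\V$-functors and $\V$-natural transformations is analogous.

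For the pseudo-inverse, given $(\K, H, \alpha, \lambda)$ with a closed finite action and closedness hom $\mathbf{K} \colon \K^{op} \times \K \to \V_0$, I would define a $\V$-category $\tilde{\K}$ with the same objects, hom-objects $\tilde{\K}(X, Y) := \mathbf{K}(X, Y)$, and powers $X^A := H(A, X)$ for $A \in \V_{0f}$. The enriched power isomorphism reduces, for each $B \in \V_{0f}$, to the chain
\[
\V_0(B, \mathbf{K}(Y, X^A)) \cong \K(Y, (X^A)^B) \cong \K(Y, X^{A \otimes B}) \cong \V_0(A \otimes B, \mathbf{K}(Y, X)) \cong \V_0(B, [A, \mathbf{K}(Y, X)]),
\]
which, since $\V_{0f}$ is dense in $\V_0$, determines a genuine isomorphism $\mathbf{K}(Y, X^A) \cong [A, \mathbf{K}(Y, X)]$ in $\V_0$.

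The main technical obstacle is producing the composition and identity morphisms of $\tilde{\K}$ as actual morphisms in $\V_0$, not merely as families indexed by generators. Using that $\V_{0f}$ is closed under $\otimes$ (a consequence of point $(3)$ of Definition \ref{smfv}) and contains $I$, ordinary composition in $\K$ paired with the action gives, for each $A, B \in \V_{0f}$, a natural assignment
\[
(f \colon Y \to Z^A,\ g \colon X \to Y^B) \longmapsto f^B \circ g \colon X \to Z^{A \otimes B},
\]
equivalently a natural family $\V_0(A, \mathbf{K}(Y, Z)) \times \V_0(B, \mathbf{K}(X, Y)) \to \V_0(A \otimes B, \mathbf{K}(X, Z))$. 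Fixing a generalised element $B \to \mathbf{K}(X, Y)$ and using density of $\V_{0f}$ in $\V_0$, this yields maps $\mathbf{K}(Y, Z) \otimes B \to \mathbf{K}(X, Z)$ natural in $B$, which then assemble (again by density) into a single composition morphism $\mathbf{K}(Y, Z) \otimes \mathbf{K}(X, Y) \to \mathbf{K}(X, Z)$. The unit at $X$ is obtained from $\lambda$ applied to $\mathrm{id}_X$, and associativity and unitality follow by checking on generators and invoking the pentagon and triangle axioms for $\alpha$ and $\lambda$.

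Finally, verifying that the two 2-functors are mutually pseudo-inverse is routine. Forgetting and re-enriching $\C$ gives back a $\V$-category with the same hom-objects and powers, canonically isomorphic to $\C$; conversely the underlying category of $\tilde{\K}$ has morphism sets $\V_0(I, \mathbf{K}(X, Y)) \cong \K(X, X^I) \cong \K(X, Y)$, recovering $\K$ together with its action. The biequivalence on hom-categories then reduces to the observation that a $\V$-functor between finitely powered $\V$-categories preserves finite powers if and only if the underlying ordinary functor is compatible with the actions up to coherent isomorphism, and similarly for 2-cells; in each case the enriched datum is determined from the unenriched one by density of $\V_{0f}$ in $\V_0$.
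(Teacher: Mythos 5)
Your proposal is correct and follows essentially the same route as the paper, which gives no argument of its own beyond deferring to the Appendix of Janelidze--Kelly with the remark that the proof transfers because $\V_{0f}$ is a (dense) strong generator of $\V_0$; your sketch is a faithful reconstruction of that argument, with the key points (recovering hom-objects and powers from the closed action, and assembling composition, functor enrichments, and the power isomorphisms from families indexed by the dense subcategory $\V_{0f}$, using that $\V_{0f}$ is closed under $\otimes$ and contains $I$) all in place. The only blemishes are cosmetic: in the final paragraph $\V_0(I,\mathbf{K}(X,Y))\cong\K(X,Y^I)$, not $\K(X,X^I)$, and the ``routine'' verifications (that the power isomorphisms are $\V$-natural and that the induced hom-maps of a lifted functor respect composition) are genuinely checks-on-generators of the same kind you already describe.
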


Even if our setting is different from that of \cite{JK01:articolo}, the same proof applies since $\V_{0f}$ is a strong generator of $\V_0$.

Recall now the definition of (ordinary) regular congruence from \cite{Ben89:articolo}

\begin{Def}
	Let $\C$ be an ordinary regular category. A {\em pullback congruence} on $\C$ is a class $\Sigma$ of maps of $\C$ for which:\begin{itemize}
		
		\item every isomorphism belongs to $\Sigma$;
		\item if $f=h\circ g$ and two of the three maps are in $\Sigma$, so is the third;
		\item $\Sigma$ is pullback stable: for any pullback in $\C$
		\begin{center}
			\begin{tikzpicture}[baseline=(current  bounding  box.south), scale=2]
			
			\node (a) at (0,0.7) {$X'$};
			\node (b) at (0.7,0.7) {$X$};
			\node (c) at (0, 0) {$Y$};
			\node (d) at (0.7, 0) {$Y'$};
			
			\path[font=\scriptsize]
			
			(a) edge [->] node [above] {$g'$} (b)
			(a) edge [->] node [left] {$f'$} (c)
			(b) edge [->] node [right] {$f$} (d)
			(c) edge [->] node [below] {$g$} (d);
			
			\end{tikzpicture}	
		\end{center}
		if $f\in\Sigma$, then $f'\in\Sigma$;
	\end{itemize}
	We call it a {\em regular congruence} if in addition $\Sigma$ is local: for any pullback in $\C$ as above, for which $g$ is a regular epimorphism, if $f'\in\Sigma$ then $f\in\Sigma$.
\end{Def}

In the enriched context we consider the following corresponding notion:

\begin{Def}
	Let $\C$ be a regular $\V$-category. A {\em pullback $\V$-congruence} on $\C$ is a class $\Sigma$ of maps from $\C$ which is a pullback congruence in the ordinary sense and is closed under finite powers (for each $A\in\V_f$, if $h\in\Sigma$ then $h^A\in\Sigma$). Similarly a {\em regular $\V$-congruence} is a pullback $\V$-congruence which is also a regular congruence in the ordinary sense. 
\end{Def}

Then we can now prove:

\begin{prop}\label{frac}
	Let $\C$ be a finitely complete $\V$-category and $\Sigma$ a pullback $\V$-congruence on $\C$. Then the $\V$-category of fractions $\C[\Sigma^{-1}]$ exists in $\V\text{-}\bo{Lex}$. In other words there is a finitely complete $\V$-category $\C[\Sigma^{-1}]$ together with a lex $\V$-functor $$P:\C\to \C[\Sigma^{-1}]$$ such that, for any $\B$ in $\V\text{-}\bo{Lex}$, composition with $P$ induces an equivalence of categories between $\V\text{-}\textnormal{Lex}(\C[\Sigma^{-1}],\B)$ and the full subcategory of 
	$\V\text{-}\textnormal{Lex}(\C,\B)$ consisting of those functors which invert the elements of $\Sigma$.\\
	If $\C$ is regular and $\Sigma$ is a regular $\V$-congruence, then $\C[\Sigma^{-1}]$ is a regular $\V$-category, $P$ is a regular $\V$-functor, and $F_{\Sigma}$ is regular if and only if $F$ is so.
\end{prop}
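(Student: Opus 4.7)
My strategy is to construct $\C[\Sigma^{-1}]$ by first invoking the unenriched version of the result, due to B\'enabou \cite{Ben89:articolo}, and then transferring the $\V$-enrichment through the finite-action formalism of Proposition \ref{finpow}. Applied to $\C_0$ with the pullback congruence $\Sigma$, the unenriched version delivers a finitely complete category $\C_0[\Sigma^{-1}]$ together with a universal lex functor $P_0 : \C_0 \to \C_0[\Sigma^{-1}]$ inverting $\Sigma$; when $\C_0$ is regular and $\Sigma$ is a regular congruence, this category is also regular, $P_0$ is regular, and an ordinary lex functor out of $\C_0[\Sigma^{-1}]$ is regular if and only if its precomposition with $P_0$ is.

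Next I would lift the $\V$-enrichment. By Proposition \ref{finpow} the finite powers on $\C$ correspond to a closed finite action $H : \V_{0f}^{op} \times \C_0 \to \C_0$. Because $\Sigma$ is closed under finite powers, each $H(A,-)$ preserves $\Sigma$, so $P_0 \circ H(A,-)$ inverts $\Sigma$ and factors uniquely through $P_0$ as $\bar H(A,-) : \C_0[\Sigma^{-1}] \to \C_0[\Sigma^{-1}]$. Functoriality in $A$ and the coherence isomorphisms $\alpha$ and $\lambda$ descend by the same universal property, yielding a finite action $\bar H$ on $\C_0[\Sigma^{-1}]$ that is strictly preserved by $P_0$.

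The main obstacle is verifying that $\bar H$ is closed, since this is precisely what produces the $\V$-enrichment. For this I would use the description of hom-sets in $\C_0[\Sigma^{-1}]$ arising from the right calculus of fractions encoded by the pullback-congruence axioms, namely
\[
\C_0[\Sigma^{-1}](P_0 Z, P_0 Y) \;\cong\; \text{colim}_{(s:Z' \to Z)\,\in\,\Sigma/Z}\,\C_0(Z',Y),
\]
a filtered colimit. Applying this to $\bar H(A, P_0 X) = P_0(H(A,X))$, using closedness of the original action $H$, and exploiting the fact that $\V_0(A,-)$ preserves filtered colimits for $A$ finitely presentable, I obtain
\[
\C_0[\Sigma^{-1}](P_0 Z, \bar H(A, P_0 X)) \;\cong\; \V_0\!\left(A,\, \text{colim}_{s}\,\C(Z',X)\right).
\]
Setting $\bo{K}(P_0 Z, P_0 X) := \text{colim}_{s}\,\C(Z',X)$ therefore exhibits $\bar H$ as closed, and Proposition \ref{finpow} promotes $\C_0[\Sigma^{-1}]$ to a $\V$-category $\C[\Sigma^{-1}]$ with finite powers computed via $\bar H$, and upgrades $P_0$ to a $\V$-functor $P$ preserving them; preservation of finite conical limits is inherited from the unenriched result, so $P$ is lex.

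The enriched universal property then follows formally: by Proposition \ref{finpow} a lex $\V$-functor $F : \C \to \B$ inverting $\Sigma$ amounts to an ordinary lex functor $F_0$ inverting $\Sigma$ compatibly with the finite action; the ordinary universal property produces a unique lex $\bar F_0 : \C_0[\Sigma^{-1}] \to \B_0$ factoring $F_0$ through $P_0$, and compatibility with $\bar H$ transfers from $F_0$ by uniqueness, so $\bar F_0$ enriches to the desired lex $\V$-functor $\bar F : \C[\Sigma^{-1}] \to \B$; full faithfulness on $\V$-natural transformations is a pointwise check using essential surjectivity of $P_0$. For the regular addendum, the unenriched statement makes $\C_0[\Sigma^{-1}]$ regular and $P_0$ regular, while stability of regular epimorphisms under projective powers in $\C[\Sigma^{-1}]$ is obtained by lifting a regular epimorphism back along $P_0$ (via the calculus of fractions), applying the corresponding stability in $\C$, and pushing forward through the regular $\V$-functor $P$.
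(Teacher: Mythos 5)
Your proposal follows essentially the same route as the paper: pass to $\C_0$ and invoke B\'enabou's unenriched localization, transport the finite-power structure as a closed finite action on $\C_0[\Sigma^{-1}]$ using the filtered-colimit description of its hom-sets together with finite presentability of $A$, re-enrich via Proposition \ref{finpow}, and handle the regular case by lifting regular epimorphisms along $P_0$ so that $e^A\cong P(h^A)$. All of that matches the paper's proof step for step.

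There is, however, one verification you assert rather than prove, and it is the one point the paper has to work for: the claim that ``preservation of finite conical limits is inherited from the unenriched result, so $P$ is lex.'' The unenriched result only gives you ordinary finite limits in $\C_0[\Sigma^{-1}]$; for $\C[\Sigma^{-1}]$ to be finitely complete as a $\V$-category you must additionally check that these ordinary limits are \emph{conical} limits in the enriched sense, i.e.\ that each representable $\C[\Sigma^{-1}](X,-)_0:\C_0[\Sigma^{-1}]\to\V_0$ preserves them. As the paper points out in its Background section, this is automatic when the category has copowers by a strong generator of $\V_0$, but $\C[\Sigma^{-1}]$ is not known to have such copowers, so the step is genuinely non-trivial. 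The paper's argument is: since $\V_{0f}$ is a strong generator of $\V_0$, it suffices to show that $\V_0(A,\C[\Sigma^{-1}](X,-)_0)$ is lex for each $A\in\V_{0f}$; this functor agrees with $\C_0[\Sigma^{-1}](X,(-)^A)$, whose precomposition with $P_0$ is a filtered colimit of lex functors $\C_0(X',(-)^A)$ and hence lex, so the two-dimensional universal property of $P_0$ forces the functor on $\C_0[\Sigma^{-1}]$ itself to be lex. You have all the ingredients for this in hand (the colimit formula for $\bo{K}$ and the universal property of $P_0$), but without some such argument your proof of finite completeness of $\C[\Sigma^{-1}]$ as a $\V$-category is incomplete.
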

\begin{proof}
	Consider $\C_0$ as an ordinary regular category; since $\Sigma$ is a pullback congruence the category of fractions $\C_0[\Sigma^{-1}]$ exists and is finitely complete (by Section 1.7 of \cite{Ben89:articolo}). Denote by $P_0:\C_0\to\C_0[\Sigma^{-1}]$ the corresponding lex functor with the usual universal property. We also know that $\C_0[\Sigma^{-1}]$ has the same objects as $\C_0$ and hom-sets given by the filtered colimits
	$$ \C_0[\Sigma^{-1}](X,Y)\cong\underset{X'\to X\in\Sigma}{\text{colim}}\C_0(X',Y) $$
	for each $X$ and $Y$ in $\C_0$ \cite[Proposition~I.2.4]{GZ67:libro}.
	
	By definition $\C$ has finite powers; hence we have an induced finite action $\V_{0f}^{op}\times \C_0\to\C_0$ given by $(A,X)\mapsto X^A$. Since by hypothesis $\Sigma$ is closed under finite powers, this action extends to $\C_0[\Sigma^{-1}]$:
	\begin{center}
		\begin{tikzpicture}[baseline=(current  bounding  box.south), scale=2]
		
		\node (a) at (0,0.8) {$\V_{0f}^{op}\times \C_0$};
		\node (b) at (1.5,0.8) {$\C_0$};
		\node (c) at (0, 0) {$\V_{0f}^{op}\times \C_0[\Sigma^{-1}]$};
		\node (d) at (1.5, 0) {$\C_0[\Sigma^{-1}]$};
		
		\path[font=\scriptsize]
		
		(a) edge [->] node [above] {} (b)
		(a) edge [->] node [left] {$\text{id}\times P_0$} (c)
		(b) edge [->] node [right] {$P_0$} (d)
		(c) edge [->] node [below] {$H$} (d);
		
		\end{tikzpicture}	
	\end{center}
	Then $H:\V_{0f}^{op}\times \C_0[\Sigma^{-1}]\to\C_0[\Sigma^{-1}]$, together with $\alpha_{\Sigma}:=P_0(\alpha_{\C})$ and $\lambda_{\Sigma}:=P_0(\lambda_{\C})$, defines a finite action on $\C_0[\Sigma^{-1}]$ ($\alpha_{\C}$ and $\lambda_{\C}$ being the natural isomorphisms induced by $\C$).
	
	Denote, as in Definition \ref{finact}, $X^A:=H(A,X)$; then for each $X,Y$ in $\C_0[\Sigma^{-1}]$ and $A\in\V_{0f}$ the following hold 
	\begin{equation*}
	\begin{split}
	\C_0[\Sigma^{-1}](X,Y^A)&\cong \underset{X'\to X\in\Sigma}{\text{colim}}\C_0(X',Y^A)\\
	&\cong \underset{X'\to X\in\Sigma}{\text{colim}}\V_0(A,\C(X',Y))\\
	&\cong \V_0(A,\underset{X'\to X\in\Sigma}{\text{colim}}\C(X',Y))
	\end{split}
	\end{equation*}
	where the last holds since $A$ is finitely presentable and the colimit is filtered; this proves that $H$ is a closed finite action on $\C_0[\Sigma^{-1}]$. It then follows from Proposition \ref{finpow} that there exists a $\V$-category with finite powers $\C[\Sigma^{-1}]$ whose underlying ordinary category is $\C_0[\Sigma^{-1}]$, and whose hom-objects are given by 
	$$ \C[\Sigma^{-1}](X,Y)\cong \underset{X'\to X\in\Sigma}{\text{colim}}\C(X',Y); $$
	in addition finite powers in $\C[\Sigma^{-1}]$ are computed as in $\C$.
	
	Next we prove that $\C[\Sigma^{-1}]$ has finite conical limits. We already know that $\C_0[\Sigma^{-1}]$ has ordinary finite limits; in order to show that these are enriched, it suffices to prove that they are preserved by each representable functor $\C[\Sigma^{-1}](X,-)_0: \C_0[\Sigma^{-1}]\to V_0$. And this is true if and only if they are preserved by $\V_0(A,\C[\Sigma^{-1}](X,-)_0):\C_0[\Sigma^{-1}]\to\bo{Set}$, for each $A\in\V_{0f}$. Consider the following commutative (up to isomorphism) diagram: 
	
	\begin{center}
		
		\begin{tikzpicture}[baseline=(current  bounding  box.south), scale=2]
		
		\node (a) at (-0.5,0) {$\C_0$};
		\node (b) at (0.6,0) {$\C_0[\Sigma^{-1}]$};
		\node (c) at (2.2,0) {$\V_0$};
		
		\node (a') at (-0.5,-0.8) {$\C_0$};
		\node (b') at (0.6,-0.8) {$\C_0[\Sigma^{-1}]$};
		\node (c') at (2.2,-0.8) {$\bo{Set}$};
		
		\path[font=\scriptsize]
		
		(a) edge [->] node [above] {$P_0$} (b)
		(b) edge [->] node [above] {$\C[\Sigma^{-1}](X,-)$} (c)
		(a') edge [->] node [above] {$P_0$} (b')
		(b') edge [->] node [above] {$\C[\Sigma^{-1}](X,-)$} (c')
		(a) edge [->] node [left] {$(-)^A$} (a')
		(b) edge [->] node [left] {$(-)^A$} (b')
		(c) edge [->] node [right] {$\V_0(A,-)$} (c');
		\end{tikzpicture}
	\end{center}
	
	Now, the lower composite preserves finite limits by construction; hence, by the universal property of $P_0$, we obtain that $\V_0(A,\C[\Sigma^{-1}](X,-)_0)$ preserves finite limits as desired.
	
	It follows that $\C[\Sigma^{-1}]$ is a finitely complete $\V$-category; moreover, again by Proposition \ref{finpow}, the ordinary functor $P_0$ extends to a $\V$-functor $P:\C\to\C[\Sigma^{-1}]$ preserving finite powers and finite conical limits (because $P_0$ preserves them); hence $P$ preserves all finite weighted limits. 
	
	Finally, let us prove that $P:\C\to\C[\Sigma^{-1}]$ has the required universal property. Let $F:\C\to\B$ be any lex $\V$-functor; if $F$ factors through $P$ then it certainly inverts the arrows in $\Sigma$ since $P$ does. Vice versa, assume that $F$ sends the arrows in $\Sigma$ to isomorphisms; then, by the ordinary universal property, $F_0$ factors through $P_0$ as $F_0=(F_0)_{\Sigma}\circ P_0$, with $(F_0)_{\Sigma}:\C_0[\Sigma^{-1}]\to\B_0$ lex. By construction this functor respects the finite action of $\C_0[\Sigma^{-1}]$ in the sense of Proposition \ref{finpow}; it then follows that $(F_0)_{\Sigma}$ extends to a $\V$-functor $F_{\Sigma}:\C[\Sigma^{-1}]\to\B$ preserving finite powers. As a consequence $F_{\Sigma}$ is a lex $\V$-functor satisfying the required properties (the uniqueness of the factorization follows from the ordinary case).
	
	Assume now that $\Sigma$ is a regular $\V$-congruence, then by Section 2.2 of \cite{Ben89:articolo} the category $\C_0[\Sigma^{-1}]$ and the functor $P$ are regular; we have to show that $\C[\Sigma^{-1}]$ is regular as a $\V$-category and $P$ is a regular $\V$-functor. We already know that $\C[\Sigma^{-1}]$ is finitely complete and $\C_0[\Sigma^{-1}]$ is regular in the ordinary sense; in addition $P$ preserves regular epimorphisms since $P_0$ does. It only remains to prove that regular epimorphisms in $\C[\Sigma^{-1}]$ are stable under powers with finite projective objects. Let $e$ be a regular epimorphism in $\C[\Sigma^{-1}]$ and $A\in(\V_0)_{pf}$; by \cite[Theorem~2.2.2.(7)]{Ben89:articolo}, $e\cong P(h)$ for a regular epimorphism $h$ in $\C$; then $e^A\cong P(h)^A\cong P(h^A)$ is a regular epimorphism because $h^A$ is and $P$ preserves them.
	The fact that $F_{\Sigma}$ is regular if and only if $F$ is follows directly from the previous and the ordinary cases.
\end{proof}

The following gives another characterization of definable $\V$-categories:

\begin{prop}\label{defreg}
	Let $\D$ be a $\V$-category; the following are equivalent:\begin{enumerate}
		\item $\D$ is a definable $\V$-category;
		\item there exists a regular $\V$-category $\C$ such that $\D\simeq\textnormal{Reg}(\C,\V)$.
	\end{enumerate}
\end{prop}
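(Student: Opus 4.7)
The implication $(2) \Rightarrow (1)$ follows immediately from Remark \ref{regdef}: for any small regular $\V$-category $\C$ the subcategory $\textnormal{Reg}(\C,\V) \subseteq \textnormal{Lex}(\C,\V)$ is a finite injectivity class inside an \lfp\ $\V$-category, hence definable. For $(1) \Rightarrow (2)$, I would start from a presentation $\D \simeq \M\textnormal{-inj}$ in some \lfp\ $\V$-category $\L$, where $\M$ is a small set of arrows with finitely presentable domain and codomain. By Kelly's duality (Theorem \ref{kelly}) we may identify $\L \simeq \textnormal{Lex}(\A,\V)$ with $\A := \L_f^{op}$ small and finitely complete; each $h \in \M$ corresponds to an arrow $\tilde{h}$ in $\A$, and an object $L \in \L$ corresponding to $F \in \textnormal{Lex}(\A,\V)$ is $h$-injective precisely when $F(\tilde{h})$ is a regular epimorphism in $\V$.

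Next I would pass to the free regular completion $\A' := \A_{\textnormal{reg/lex}}$ from Remark \ref{reg/lex}: a small regular $\V$-category satisfying $\textnormal{Reg}(\A',\V) \simeq \textnormal{Lex}(\A,\V)$, so that $\D$ is identified with the full subcategory of regular $\V$-functors $\A' \to \V$ sending every $\tilde{h}$ to a regular epimorphism. In $\A'$, factor each $\tilde{h} = m_h \circ e_h$ as a regular epi followed by a mono. The key observation is that for a regular $\V$-functor $F:\A'\to\V$, $F(\tilde{h})$ is a regular epimorphism if and only if $F(m_h)$ is an isomorphism: $F$ preserves both monos and regular epis, so uniqueness of regular-epi/mono factorizations in $\V$ forces $F(m_h)$ to be invertible whenever $F(\tilde{h})$ is a regular epi. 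Letting $\Sigma$ be the smallest regular $\V$-congruence on $\A'$ containing $\{m_h : h \in \M\}$ --- which exists as an intersection of regular $\V$-congruences --- Proposition \ref{frac} produces a small regular $\V$-category $\C := \A'[\Sigma^{-1}]$ together with a regular quotient functor $P:\A'\to\C$ universal among regular $\V$-functors out of $\A'$ inverting $\Sigma$.

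To conclude I would chain the equivalences
\[
\begin{aligned}
\textnormal{Reg}(\C,\V) &\simeq \{F \in \textnormal{Reg}(\A',\V) : F(m_h) \textnormal{ invertible for all } h \in \M\} \\
&\simeq \{\bar{F} \in \textnormal{Lex}(\A,\V) : \bar{F}(\tilde{h}) \textnormal{ is a regular epimorphism for all } h \in \M\} \\
&\simeq \D,
\end{aligned}
\]
where the first equivalence uses the universal property of $\C$ together with the fact that the class of arrows inverted by a regular $\V$-functor is itself a regular $\V$-congruence (so inverting $\Sigma$ reduces to inverting only its chosen generators), the second combines the mono-to-iso characterization above with $\textnormal{Reg}(\A',\V) \simeq \textnormal{Lex}(\A,\V)$, and the third is the Kelly-duality description of $\M\textnormal{-inj}$ inside $\L$.

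The main obstacle I anticipate is verifying the auxiliary claim that the class of arrows inverted by a regular $\V$-functor really does form a regular $\V$-congruence --- in particular the locality property with respect to regular epimorphisms, and the closure under finite powers --- as well as checking that the displayed chain of equivalences is enriched rather than merely ordinary. Both are routine consequences of regular $\V$-functors preserving finite weighted limits and regular epimorphisms, but they require careful bookkeeping.
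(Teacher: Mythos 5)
Your proposal is correct and follows essentially the same route as the paper: present $\L$ as $\textnormal{Lex}(\A,\V)$, pass to the free regular completion $\A_{\textnormal{reg/lex}}$, reduce injectivity to inverting the monomorphism parts $m_h$ of the image factorizations, and apply the calculus of fractions of Proposition \ref{frac} to a regular $\V$-congruence $\Sigma$. The only (immaterial) difference is that the paper takes $\Sigma$ to be the saturation $\{f : F(f)\text{ invertible for all }F\in\D\}$ rather than the smallest congruence containing the $m_h$; both choices require the same verification, since the saturation is exactly the intersection over $F\in\D$ of the invertibility classes you flag as needing to be checked to be regular $\V$-congruences.
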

\begin{proof}
	$(2)\Rightarrow(1)$ follows from Remark \ref{regdef}. In order to prove $(1)\Rightarrow(2)$, let $\D=\M$-inj be a finite injectivity class in a \lfp\ $\V$-category $\L$. Write $\L$ as $\text{Lex}(\A,\V)$ where $\A=\L_f^{op}$; then $\M$ can be identified with a collection of morphisms in $\A$, and $\D$ coincides with the full subcategory of $\text{Lex}(\A,\V)$ given by those $\V$-functors that send each $h\in\M$ to a regular epimorphism. Consider now $\C'=\A_{\text{reg/lex}}$ to be the free regular $\V$-category over $\A$ (this exists by Remark \ref{reg/lex}); it follows that $\text{Lex}(\A,\V)\simeq\text{Reg}(\C',\V)$. Under this equivalence $\M$ corresponds to a small set of arrows in $\C'$, and $\D$ to the full subcategory of $\text{Reg}(\C',\V)$ given by those regular $\V$-functors that send each $h\in\M$ to a regular epimorphism. For each $h\in\M$ take its image factorization $h=m_h\circ e_h$ in $\C'$, where $e_h$ is a regular epimorphism and $m_h$ a monomorphism; then a regular $\V$-functor $F:\C'\to\V$ sends $h$ to a regular epimorphism in $\V$ if and only if it sends $m_h$ to an isomorphism. Thus $\D$ corresponds to the full subcategory of $\text{Reg}(\C',\V)$ given by those regular $\V$-functors that invert the maps in the set $\N=\{\ m_h \ |\ h\in\M\ \}$. Let now $\Sigma$ be the saturation of $\N$ with respect to $\D$:
	$$ \Sigma:=\{ f\in\C'\ |\  F(f)\text{ is invertible for each } F\in\D  \}\supseteq\N, $$
	where we are seeing $\D$ in $\text{Reg}(\C',\V)$ as above. Then a regular $\V$-functor $F:\C'\to\V$ is $\M$-injective if and only if it inverts each $f\in\Sigma$. It's easy to check that $\Sigma$ is a regular $\V$-congruence in $\C'$; hence by the previous Proposition, $\C:=\C'[\Sigma^{-1}]$ exists as a regular $\V$-category and by construction $\D\simeq\text{Reg}(\C,\V)$. 
\end{proof}

Let $\D$ be a definable $\V$-category; then by the previous Proposition there is a regular $\V$-category $\C$ for which $\D\simeq\text{Reg}(\C,\V)$. As a consequence $\textnormal{DEF}(\D,\V)\simeq\textnormal{DEF}(\text{Reg}(\C,\V),\V)$ is a small $\V$-category by Proposition \ref{gencoeq}; since $\V$ is regular as a $\V$-category, $\textnormal{DEF}(\D,\V)$ is a regular $\V$-category too (being closed in $[\D,\V]$ under finite limits and coequalizers of kernel pairs). Moreover, given any regular $\V$-category $\C$, the regular $\V$-functors from $\C$ to $\V$ form a definable subcategory $\text{Reg}(\C,\V)$ of $\text{Lex}(\C,\V)$ (as shown in Remark \ref{regdef}). As a consequence we obtain an adjunction

\begin{center}\label{adj}
	
	\begin{tikzpicture}[baseline=(current  bounding  box.south), scale=2]

	\node (f) at (0,0.4) {$\V\text{-}\bo{DEF}$};
	\node (g) at (1.8,0.4) {$\V\text{-}\bo{Reg}^{op}$};
	\node (h) at (0.9,0.4) {$\perp$};
	
	\path[font=\scriptsize]

	([yshift=2.3pt]f.east) edge [->] node [above] {$\textnormal{DEF}(-,\V)$} ([yshift=2.3pt]g.west)
	([yshift=-2.3pt]f.east) edge [<-] node [below] {$\textnormal{Reg}(-,\V)$} ([yshift=-2.3pt]g.west);
	\end{tikzpicture}
	
\end{center}
of 2-categories. Indeed for each regular $\C$ and each definable $\V$-category $\D$ the following holds
$$\V\text{-}\bo{DEF}(\D,\textnormal{Reg}(\C,\V))\cong \V\text{-}\bo{Reg}(\C,\textnormal{DEF}(\D,\V))$$
since each is isomorphic to the category of $\V$-functors $\D\otimes\C\to\V$ which are definable in the first variable and regular in the second.

The counit and unit of this adjunction are given by the evaluation functors: $$\text{ev}_{\C}:\C\to\text{DEF}(\text{Reg}(\C,\V),\V)$$ for a regular $\C$, and $$\text{ev}_{\D}:\D\to\text{Reg}(\text{DEF}(\D,\V),\V)$$ for a definable $\D$. We already saw in Theorems \ref{Barr} and \ref{Makkai} that the counit $\text{ev}_{\C}$ is a fully faithful functor, and an equivalence if $\C$ is moreover exact. 

Now assume that our base for enrichment $\V$ is a symmetric monoidal finitary variety; then $\V_0$ is an exact category and $\V$ is exact as a $\V$-category. As a consequence $\text{DEF}(\D,\V)$ is a small exact $\V$-category for each definable $\D$; hence the 2-adjunction between $\V\text{-}\bo{DEF}$ and $\V\text{-}\bo{Reg}^{op}$ restricts to

\begin{center}
	
	\begin{tikzpicture}[baseline=(current  bounding  box.south), scale=2]

	\node (f) at (0,0.4) {$\V\text{-}\bo{DEF}$};
	\node (g) at (1.8,0.4) {$\V\text{-}\bo{Ex}^{op}$};
	\node (h) at (0.9,0.4) {$\perp$};
	
	\path[font=\scriptsize]

	([yshift=2.3pt]f.east) edge [->] node [above] {$\textnormal{DEF}(-,\V)$} ([yshift=2.3pt]g.west)
	([yshift=-2.3pt]f.east) edge [<-] node [below] {$\textnormal{Reg}(-,\V)$} ([yshift=-2.3pt]g.west);
	\end{tikzpicture}
	
\end{center}
where $\V\text{-}\bo{Ex}$ is the 2-category of all small exact $\V$-categories, regular $\V$-functors, and $\V$-natural transformations. Since by Theorem \ref{Makkai} the counit of this adjunction is an equivalence, it follows that $\textnormal{Reg}(-,\V)$ is bi-fully faithful  (an equivalence on the categories of homomorphisms). Moreover by Proposition \ref{defreg}, each definable $\D$ is equivalent to $\textnormal{Reg}(\C,\V)$ for some regular $\V$-category $\C$; taking $\B$ to be $\C_{\text{ex/reg}}$, the free exact $\V$-category on $\C$ as a regular $\V$-category (which exists by Remark \ref{reexreg}), we then obtain $\D\simeq\textnormal{Reg}(\B,\V)$. This means that $\textnormal{Reg}(-,\V):\V\text{-}\bo{Ex}^{op}\to \V\text{-}\bo{DEF}$ is also essentially surjective. As a consequence $\textnormal{Reg}(-,\V)$ is a biequivalence with inverse $\textnormal{DEF}(-,\V)$. Thence we have proven:

\begin{teo}\label{main}
	Let $\V$ be a symmetric monoidal finitary variety. Then the 2-adjunction
	\begin{center}
		
		\begin{tikzpicture}[baseline=(current  bounding  box.south), scale=2]

		\node (f) at (0,0.4) {$\textnormal{DEF}(-,\V):\V\text{-}\bo{DEF}$};
		\node (g) at (2.2,0.4) {$\V\text{-}\bo{Ex}^{op}:\textnormal{Reg}(-,\V)$};
		
		\path[font=\scriptsize]

		([yshift=1.3pt]f.east) edge [->] node [above] {} ([yshift=1.3pt]g.west)
		([yshift=-1.3pt]f.east) edge [<-] node [below] {} ([yshift=-1.3pt]g.west);
		\end{tikzpicture}
		
	\end{center}
	is a biequivalence.
\end{teo}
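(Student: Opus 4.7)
The plan is to assemble the pieces already established in the excerpt and verify that the 2-adjunction restricts and then collapses to a biequivalence. First I would verify the 2-adjunction
$$\V\text{-}\bo{DEF}(\D,\textnormal{Reg}(\C,\V))\simeq \V\text{-}\bo{Reg}(\C,\textnormal{DEF}(\D,\V))$$
by exhibiting both sides as the $\V$-category of two-variable $\V$-functors $\D\otimes\C\to\V$ that are definable in the first argument and regular in the second. This needs that $\textnormal{DEF}(\D,\V)$ is a small regular $\V$-category: smallness follows from Proposition \ref{gencoeq} (after writing $\D\simeq\textnormal{Reg}(\C,\V)$ via Proposition \ref{defreg}), and regularity is inherited from $\V$ since $\textnormal{DEF}(\D,\V)$ is closed in $[\D,\V]$ under finite limits and coequalizers of kernel pairs.

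Next I would restrict the codomain to $\V\text{-}\bo{Ex}^{op}$. Since $\V$ is now a symmetric monoidal finitary variety, $\V_0$ is exact and so $\V$ is exact as a $\V$-category; hence any full subcategory of $[\D,\V]$ closed under finite limits, coequalizers of kernel pairs, and effective equivalence relations inherits exactness, giving $\textnormal{DEF}(\D,\V)\in \V\text{-}\bo{Ex}$. The evaluation unit and counit are still $\V$-functorial, so the adjunction restricts along the inclusion $\V\text{-}\bo{Ex}\hookrightarrow\V\text{-}\bo{Reg}$.

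With the restricted adjunction in hand, the counit $\textnormal{ev}_\C:\C\to \textnormal{DEF}(\textnormal{Reg}(\C,\V),\V)$ at a small exact $\C$ is an equivalence by Makkai's Image Theorem \ref{Makkai}. A 2-adjunction whose counit is a pointwise equivalence is bi-fully faithful on the right adjoint, so $\textnormal{Reg}(-,\V):\V\text{-}\bo{Ex}^{op}\to \V\text{-}\bo{DEF}$ is an equivalence on each hom-category.

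Finally I would verify essential surjectivity of $\textnormal{Reg}(-,\V)$. Given a definable $\D$, Proposition \ref{defreg} produces a small regular $\V$-category $\C$ with $\D\simeq\textnormal{Reg}(\C,\V)$. Take $\B:=\C_{\text{ex/reg}}$, the free exact completion of the regular $\V$-category $\C$ guaranteed by Remark \ref{reexreg}; its universal property yields
$$\textnormal{Reg}(\B,\V)\simeq \textnormal{Reg}(\C,\V)\simeq \D,$$
so $\textnormal{Reg}(-,\V)$ hits $\D$. Combining essential surjectivity with bi-fully-faithfulness gives the biequivalence. The main obstacle here is the essential surjectivity step: one must rely on the existence and universal property of $\C_{\text{ex/reg}}$ for regular $\V$-categories in our enriched setting, which is deferred to Remark \ref{reexreg} (and treated explicitly in Section \ref{free}); the rest of the argument is formal 2-categorical bookkeeping once the key inputs \ref{Makkai}, \ref{gencoeq}, and \ref{defreg} are in place.
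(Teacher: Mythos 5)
Your proposal is correct and follows essentially the same route as the paper: the two-variable description of the 2-adjunction, restriction to $\V\text{-}\bo{Ex}^{op}$ using exactness of $\V$, bi-fully-faithfulness of $\textnormal{Reg}(-,\V)$ from Theorem \ref{Makkai}, and essential surjectivity via Proposition \ref{defreg} together with the $\text{ex/reg}$ completion from Remark \ref{reexreg}. No substantive differences to report.
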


This duality was first shown for the additive case in \cite[Theorem~2.3]{PR10:articolo}, in this context it becomes a biequivalence between the 2-category of additive definable categories and the opposite of the 2-category of abelian categories. The ordinary version appeared more recently in \cite[Theorem~3.2.5]{KR18:articolo}. As we anticipated in the introduction, the proof appearing there is incomplete; more precisely the proof of \cite[Proposition~3.2.2]{KR18:articolo} contains the following unjustified isomorphism which affects the proof of the duality:
\begin{center}
	`` $[\L,\bo{Set}](\textnormal{lim}(y'K_s),X)\simeq \textnormal{colim}[\L,\bo{Set}](y'K_s,X)$ ''.
\end{center}
where $y'$ is the Yoneda embedding. Moreover, the sort of epimorphism guaranteed in Corollary 3.2.3 does not seem to match that used in Theorem 3.2.4. Our Theorem \ref{main} provides a solution for this.

An immediate consequence is:

\begin{cor}\label{evaldef}
	Let $\D$ be a definable $\V$-category, where $\V$ is a symmetric monoidal finitary variety; then the evaluation functor 
	$$ \textnormal{ev}_{\D}:\D\longrightarrow\textnormal{Reg}(\textnormal{DEF}(\D,\V),\V) $$
	is an equivalence of $\V$-categories.
\end{cor}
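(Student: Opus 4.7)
The plan is to derive the corollary directly from Theorem \ref{main}, since $\textnormal{ev}_{\D}$ is precisely the unit of the displayed 2-adjunction evaluated at $\D$, and in a biequivalence both unit and counit are pointwise equivalences. So the work reduces to spelling out this general principle in the case at hand.

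First, I would use Proposition \ref{defreg} together with the exact completion of Remark \ref{reexreg} to exhibit every definable $\V$-category $\D$ as $\textnormal{Reg}(\B,\V)$ for a small \emph{exact} $\V$-category $\B$: choose any regular $\V$-category $\C$ with $\D\simeq\textnormal{Reg}(\C,\V)$ and set $\B:=\C_{\text{ex/reg}}$, so that $\textnormal{Reg}(\B,\V)\simeq\textnormal{Reg}(\C,\V)\simeq\D$ since $\V$ is exact. Next, for such a $\D$, Makkai's Image Theorem (Theorem \ref{Makkai}) tells us that $\textnormal{ev}_{\B}:\B\to\textnormal{DEF}(\textnormal{Reg}(\B,\V),\V)=\textnormal{DEF}(\D,\V)$ is an equivalence. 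Applying the 2-functor $\textnormal{Reg}(-,\V)$ then produces an equivalence $\textnormal{Reg}(\textnormal{DEF}(\D,\V),\V)\simeq\textnormal{Reg}(\B,\V)\simeq\D$.

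Finally, I would identify this composite equivalence with $\textnormal{ev}_{\D}$ using the triangle identity of the 2-adjunction: the identity $\textnormal{Reg}(\textnormal{ev}_{\B},\V)\circ\textnormal{ev}_{\textnormal{Reg}(\B,\V)}\cong 1_{\textnormal{Reg}(\B,\V)}$ combined with the fact that $\textnormal{Reg}(\textnormal{ev}_{\B},\V)$ is an equivalence (since $\textnormal{ev}_{\B}$ is, by Makkai) forces $\textnormal{ev}_{\textnormal{Reg}(\B,\V)}$ to be an equivalence. For an arbitrary definable $\D$, fixing an equivalence $\phi:\D\simeq\textnormal{Reg}(\B,\V)$ and invoking the naturality square
\[
\textnormal{ev}_{\textnormal{Reg}(\B,\V)}\circ\phi \;\cong\; \textnormal{Reg}(\textnormal{DEF}(\phi,\V),\V)\circ\textnormal{ev}_{\D},
\]
in which three of the four maps are equivalences, yields that $\textnormal{ev}_{\D}$ is an equivalence as well. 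The only real subtlety is this last naturality step: making sure the abstract equivalence $\D\simeq\textnormal{Reg}(\textnormal{DEF}(\D,\V),\V)$ obtained from the biequivalence actually coincides, up to isomorphism, with the specific functor $\textnormal{ev}_{\D}$ — but this is exactly what the triangle identity guarantees.
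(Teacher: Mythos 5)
Your proof is correct and follows essentially the same route as the paper, which derives the corollary as an immediate consequence of Theorem \ref{main}: the counit $\textnormal{ev}_{\B}$ is an equivalence by Makkai's theorem, every definable $\D$ is of the form $\textnormal{Reg}(\B,\V)$ for exact $\B$ via Proposition \ref{defreg} and the ex/reg completion, and the unit is then an equivalence by the triangle identity. You have merely made explicit the standard 2-categorical bookkeeping that the paper leaves implicit in the phrase ``an immediate consequence is''.
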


\begin{obs}
	It is worth pointing out that the previous Corollary still holds even if $\V$ is just a symmetric monoidal finitary quasivariety; a proof for this can be found in \cite[Proposition~4.3.5]{Ten19:articolo} (the notion of definable $\V$-category appearing there is equivalent to the one we use thanks to Proposition \ref{defreg}; note moreover that the notion of symmetric monoidal finitary quasivariety is slightly different, but that doesn't affect the proofs).
\end{obs}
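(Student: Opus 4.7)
The goal is to extend Corollary \ref{evaldef} from the variety case to the case where $\V$ is merely a symmetric monoidal finitary quasivariety. The proof of Theorem \ref{main} (which specializes to Corollary \ref{evaldef}) breaks down when $\V$ is only a quasivariety: $\textnormal{DEF}(\D,\V)$ is then only guaranteed to be a regular $\V$-category, not an exact one, so the biequivalence $\V\text{-}\bo{DEF}\simeq\V\text{-}\bo{Ex}^{op}$ cannot be invoked wholesale. Nevertheless, the unit $\textnormal{ev}_{\D}$ can still be shown to be an equivalence directly.

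Let $\D$ be a definable $\V$-category. By Proposition \ref{defreg}, whose proof uses only the quasivariety hypothesis (via Proposition \ref{frac}), we have $\D\simeq\textnormal{Reg}(\C,\V)$ for some small regular $\V$-category $\C$. The plan is then as follows. First, construct an exact $\V$-category $\B$ together with a regular $\V$-functor $\C\to\B$ which induces an equivalence $\textnormal{Reg}(\B,\V)\simeq\textnormal{Reg}(\C,\V)\simeq\D$. Second, apply Makkai's Image Theorem (Theorem \ref{Makkai}, which does not require $\V$ itself to be exact, only the source $\B$) to obtain an equivalence $\textnormal{ev}_{\B}\colon\B\to\textnormal{DEF}(\textnormal{Reg}(\B,\V),\V)\simeq\textnormal{DEF}(\D,\V)$. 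Third, apply the $2$-functor $\textnormal{Reg}(-,\V)$ to this equivalence to get $\textnormal{Reg}(\textnormal{DEF}(\D,\V),\V)\simeq\textnormal{Reg}(\B,\V)\simeq\D$, and verify by naturality of evaluation that the resulting composite equivalence agrees with $\textnormal{ev}_{\D}$.

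The hard step is the first one. In the variety case, one uses the free exact completion $\B=\C_{\textnormal{ex/reg}}$ from Remark \ref{reexreg}, whose universal property $\textnormal{Reg}(\C_{\textnormal{ex/reg}},\E)\simeq\textnormal{Reg}(\C,\E)$ for $\E$ exact can be applied to $\E=\V$ precisely because $\V$ is then exact as a $\V$-category. In the quasivariety case this specialization is unavailable. The natural workaround is to construct $\B$ explicitly as a full sub-$\V$-category of $\textnormal{DEF}(\D,\V)$: namely, the closure of the essential image of $\textnormal{ev}_{\C}\colon\C\to\textnormal{DEF}(\D,\V)$ (which is regular by Theorem \ref{Barr}) under quotients of effective equivalence relations computed inside $\textnormal{DEF}(\D,\V)$. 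Since $\textnormal{DEF}(\D,\V)$ is a regular $\V$-category (being closed in $[\D,\V]$ under finite weighted limits and coequalizers of kernel pairs, and $\V$ itself being regular as a $\V$-category), $\B$ inherits regularity; by construction it has effective equivalence relations, hence is exact. To obtain the universal property $\textnormal{Reg}(\B,\V)\simeq\textnormal{Reg}(\C,\V)$ one shows that each regular $\V$-functor $F\colon\C\to\V$ extends uniquely along $\C\to\B$, with the value on a quotient $[r]$ of an equivalence relation $r$ forced to be the coequalizer of $Fr$ in $\V$ --- this coequalizer exists and behaves well because $\V$ is regular with coequalizers of kernel pairs, even though $\V$ need not satisfy the full exactness condition used to phrase the general universal property of $\C_{\textnormal{ex/reg}}$.

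Once $\B$ is in hand, steps (ii) and (iii) proceed as in the variety case; the identification of the final equivalence with $\textnormal{ev}_{\D}$ reduces to a straightforward diagram chase using that evaluation commutes, up to the equivalences constructed, with application of $\textnormal{Reg}(-,\V)$ to $\textnormal{ev}_{\B}$.
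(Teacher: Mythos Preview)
The paper does not prove this remark in the text; it simply cites \cite[Proposition~4.3.5]{Ten19:articolo}. There is therefore no in-paper argument to compare against, and I focus on the soundness of your outline.

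Your overall plan is natural, but step (i)---producing an exact $\B$ with $\textnormal{Reg}(\B,\V)\simeq\textnormal{Reg}(\C,\V)$---contains two genuine gaps.

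First, the construction of $\B$ you propose does not yield an exact $\V$-category. Closing the image of $\textnormal{ev}_{\C}$ in $\textnormal{DEF}(\D,\V)$ under quotients of \emph{effective} equivalence relations adds nothing: effective equivalence relations are kernel pairs, and a regular full subcategory is already closed under their coequalizers. Exactness requires \emph{every} equivalence relation to be effective. If instead you try to adjoin quotients of arbitrary equivalence relations, computed in $\textnormal{DEF}(\D,\V)$, you run into the fact that $\textnormal{DEF}(\D,\V)$, like $\V$, is only regular: such coequalizers, even when they exist, need not have the given relation as their kernel pair, so the resulting subcategory is still not exact.

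Second---and this is the decisive obstruction---even granting an exact $\B$ receiving a regular $\V$-functor from $\C$ (say $\B=\C_{\text{ex/reg}}$, whose very existence in the quasivariety case would require a separate argument, since Remark \ref{reexreg} assumes $\V$ a variety), the equivalence $\textnormal{Reg}(\B,\V)\simeq\textnormal{Reg}(\C,\V)$ is precisely the universal property of the exact completion instantiated at $\V$, and that property is only guaranteed for \emph{exact} targets. Your proposed extension of a regular $F:\C\to\V$ to $\B$, sending the formal quotient of an equivalence relation $r$ to the coequalizer of $Fr$ in $\V$, generally fails to be left exact: since $\V$ is not exact, $(Fr_1,Fr_2)$ need not be the kernel pair of its coequalizer, so the extended functor does not preserve the kernel pair of the quotient map in $\B$. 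This is exactly the obstacle that forces the variety hypothesis in Theorem \ref{main}, and nothing in your outline circumvents it. A correct argument for the quasivariety case must avoid relying on $\V$ being an exact target; the route via an auxiliary exact $\B$ and Makkai's theorem does not achieve this as written.
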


\section{Free Exact $\V$-Categories}\label{free}

Consider again $\V$ to be a symmetric monoidal finitary variety as in the last part of the previous section. We are going to use Theorem \ref{main} to find the free exact $\V$-categories associated to finitely complete and regular ones.

In the ordinary context, exact completions over regular categories were first considered in \cite{Law73:articolo}; while regular and exact completions over finitely complete categories have been dealt with in \cite{CM82:articolo} and \cite{CV98:articolo}. A different, but equivalent, description of them has been given in \cite{Hu96:articolo} and \cite{HT96:articolo}, where exact completions are built as certain categories of functors preserving determined limits and colimits. Yet another description of the free exact category on a regular one was given in \cite{Lac99:articolo}.

\begin{prop}\label{freeexfc}
	Let $\C$ be a finitely complete $\V$-category and define $\L=\textnormal{Lex}(\C,\V)$. Then for each small exact $\V$-category $\B$, precomposition with $\textnormal{ev}:\C\to\textnormal{DEF}(\L,\V)$ induces an equivalence:
	$$ \textnormal{Reg}(\textnormal{DEF}(\L,\V),\B)\simeq\textnormal{Lex}(\C,\B).$$
	In other words $\textnormal{DEF}(\L,\V)$ is the free exact $\V$-category over $\C$ as a finitely complete $\V$-category.
\end{prop}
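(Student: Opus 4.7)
The plan is to identify $\textnormal{DEF}(\L,\V)$ with the free exact completion $\C_{\textnormal{ex/lex}}$ of $\C$, whose existence is recorded in Remark~\ref{reexreg}, by using Theorem~\ref{main} together with the universal property of $\C_{\textnormal{ex/lex}}$; the desired equivalence then follows directly. The main remaining task is to check that this identification carries the canonical lex functor $\eta\colon \C \to \C_{\textnormal{ex/lex}}$ to the evaluation functor $\textnormal{ev}\colon \C \to \textnormal{DEF}(\L,\V)$.

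By Remark~\ref{reexreg}, there is a lex $\V$-functor $\eta\colon \C \to \C_{\textnormal{ex/lex}}$ such that for every exact $\V$-category $\A$, precomposition with $\eta$ yields an equivalence $\textnormal{Reg}(\C_{\textnormal{ex/lex}}, \A) \simeq \textnormal{Lex}(\C, \A)$. Taking $\A = \V$ (which is exact as a $\V$-category, since $\V$ is a symmetric monoidal finitary variety) gives
$$\textnormal{Reg}(\C_{\textnormal{ex/lex}}, \V) \;\simeq\; \textnormal{Lex}(\C, \V) \;=\; \L.$$
Applying $\textnormal{DEF}(-, \V)$ to this equivalence and invoking Corollary~\ref{evaldef} on the small exact $\V$-category $\C_{\textnormal{ex/lex}}$ then yields
$$\textnormal{DEF}(\L, \V) \;\simeq\; \textnormal{DEF}(\textnormal{Reg}(\C_{\textnormal{ex/lex}}, \V), \V) \;\simeq\; \C_{\textnormal{ex/lex}}.$$
Combining this with the universal property of $\C_{\textnormal{ex/lex}}$ produces, for every small exact $\B$, an equivalence
$$\textnormal{Reg}(\textnormal{DEF}(\L, \V), \B) \;\simeq\; \textnormal{Reg}(\C_{\textnormal{ex/lex}}, \B) \;\simeq\; \textnormal{Lex}(\C, \B),$$
as required.

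Finally I would verify that this composite is indeed precomposition with $\textnormal{ev}$. Under the equivalence $\C_{\textnormal{ex/lex}} \simeq \textnormal{DEF}(\L,\V)$ provided by Theorem~\ref{main}, an object $c \in \C_{\textnormal{ex/lex}}$ corresponds to the functor $\L \to \V$ sending a lex $L\colon \C \to \V$ to $\widetilde{L}(c)$, where $\widetilde{L}\colon \C_{\textnormal{ex/lex}} \to \V$ is the unique regular extension of $L$ along $\eta$. Specializing to $c = \eta(C)$ gives $\widetilde{L}(\eta(C)) = L(C) = \textnormal{ev}_C(L)$, so $\eta(C) \mapsto \textnormal{ev}_C$; hence the composite $\C \xrightarrow{\eta} \C_{\textnormal{ex/lex}} \simeq \textnormal{DEF}(\L,\V)$ is precisely $\textnormal{ev}$, which yields the claim. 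The principal subtlety is this final identification of the embedding, but it is a routine diagram chase; the rest of the argument is simply assembling the previously established Theorem~\ref{main}, Corollary~\ref{evaldef}, and the universal property of the ex/lex completion.
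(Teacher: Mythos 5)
Your argument is correct, but it takes a genuinely different route from the paper's. You identify $\textnormal{DEF}(\L,\V)$ with the abstractly existing completion $\C_{\text{ex/lex}}$ of Remark \ref{reexreg} and transport its universal property across that identification, checking at the end that the identification carries $\eta$ to $\textnormal{ev}$ (that computation, $\eta(C)\mapsto(L\mapsto\widetilde{L}(\eta C)=LC)$, is the right thing to verify and is correct). The paper instead verifies the universal property of $\textnormal{DEF}(\L,\V)$ directly by a bootstrap: the case $\B=\V$ is Corollary \ref{evaldef}, this passes immediately to presheaf categories $[\A,\V]$, and a general small exact $\B$ is handled by embedding it regularly and fully faithfully into some $[\A,\V]$ via Theorem \ref{functembedd} and then using the coequalizer presentation of Proposition \ref{gencoeq} together with effectiveness of equivalence relations in $\B$ to see that the extended functor actually lands in $\B$. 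What your route buys is brevity; what it costs is that everything rests on the black box of Remark \ref{reexreg} --- in particular on the universal property of $\C_{\text{ex/lex}}$ being available against the \emph{large} exact $\V$-category $\V$ itself, and on $\C_{\text{ex/lex}}$ being small so that Makkai's theorem applies --- whereas the paper's proof exhibits the universal property rather than quoting it, which is arguably the point of this section. There is no circularity in your version, since the paper's own proof of Theorem \ref{main} already invokes $\C_{\text{ex/reg}}$ from the same remark. One slip to fix: the equivalence $\textnormal{DEF}(\textnormal{Reg}(\C_{\text{ex/lex}},\V),\V)\simeq\C_{\text{ex/lex}}$ is Theorem \ref{Makkai} applied to the small exact category $\C_{\text{ex/lex}}$, not Corollary \ref{evaldef}, which goes in the opposite direction (recovering a definable category from its dual).
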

\begin{proof}
	We are to show that the composite
	$$\textnormal{Reg}(\textnormal{DEF}(\L,\V),\B)\hookrightarrow \textnormal{Lex}(\textnormal{DEF}(\L,\V),\B)\stackrel{-\circ\text{ev}}{\longrightarrow} \textnormal{Lex}(\C,\B)$$ 
	is an equivalence for all small exact $\V$-categories $\B$. First observe that by Corollary \ref{evaldef} this is the case for $\B=\V$. As a consequence precomposition with $\textnormal{ev}$ induces the desired equivalence also for all $\B$ of the form $[\A,\V]$.
	Let now $\B$ be any small exact category; by Theorem \ref{functembedd} we can assume that $\B$ is a full subcategory of some $[\A,\V]$ with the inclusion $H:\B\to[\A,\V]$ a regular functor. Since the equivalence holds for $[\A,\V]$ we can consider the commutative square
	\begin{center}
		\begin{tikzpicture}[baseline=(current  bounding  box.south), scale=2]
		
		\node (a) at (0,0.8) {$\textnormal{Reg}(\textnormal{DEF}(\L,\V),\B)$};
		\node (b) at (2.4,0.8) {$\textnormal{Lex}(\C,\B)$};
		\node (c) at (0, 0) {$\textnormal{Reg}(\textnormal{DEF}(\L,\V),[\A,\V])$};
		\node (d) at (2.4, 0) {$\textnormal{Lex}(\C,[\A,\V])$};

		\path[font=\scriptsize]
		
		(a) edge [->] node [above] {$-\circ\textnormal{ev}$} (b)
		(b) edge [->] node [right] {$H\circ-$} (d)
		(a) edge [->] node [left] {$H\circ-$} (c)
		(c) edge [] node [above] {$\simeq$} (d)
		(c) edge [->] node [below] {$-\circ\textnormal{ev}$} (d);
		\end{tikzpicture}	
	\end{center}
	in which the bottom arrow is an equivalence and the vertical ones fully faithful. Thus the upper horizontal is fully faithful, and it is enough to prove that, given $F\in\textnormal{Lex}(\C,\B)$, the induced extension of $HF$ to $\textnormal{DEF}(\L,\V)$ takes values in $\B$. Let $G$ be the mentioned extension of $HF$, then we can consider the diagram
	\begin{center}
		\begin{tikzpicture}[baseline=(current  bounding  box.south), scale=2]
		
		\node (a) at (0,0.8) {$\C$};
		\node (b) at (1.2,0.8) {$\B$};
		\node (c) at (0, 0) {$\textnormal{DEF}(\L,\V)$};
		\node (d) at (1.2, 0) {$[\A,\V]$};

		\path[font=\scriptsize]
		
		(a) edge [->] node [above] {$F$} (b)
		(b) edge [->] node [right] {$H$} (d)
		(a) edge [->] node [left] {$\textnormal{ev}$} (c)
		(c) edge [->] node [below] {$G$} (d);
		\end{tikzpicture}	
	\end{center}
	The commutativity of this square (up to isomorphism) says that $G$ restricted to the evaluation functors $\textnormal{ev}(C)$, for $C\in\C$, takes values in $\B$. Given any other $M\in \textnormal{DEF}(\L,\V)$, by Proposition \ref{gencoeq}, we can write it as a coequalizer:
	\begin{center}
		
		\begin{tikzpicture}[baseline=(current  bounding  box.south), scale=2]
		
		\node (a) at (-0.3,0) {$\textnormal{ev}(C)$};
		\node (b) at (0.6,0) {$N$};
		\node (c) at (1.6,0) {$\textnormal{ev}(D)$};
		\node (d) at (2.5,0) {$M$};

		\path[font=\scriptsize]
		
		(a) edge [->>] node [above] {$\gamma$} (b)
		([yshift=1.5pt]b.east) edge [->] node [above] {$\alpha$} ([yshift=1.5pt]c.west)
		([yshift=-1.5pt]b.east) edge [->] node [below] {$\beta$} ([yshift=-1.5pt]c.west)
		(c) edge [->>] node [above] {$\eta$} (d);
		\end{tikzpicture}
	\end{center}
	where $(\alpha,\beta)$ is the kernel pair of $\eta$ and, since ev is fully faithful, $\alpha\circ\gamma=\text{ev}(u)$ and $\beta\circ\gamma=\text{ev}(v)$ for some $u,v:C\to D$ in $\C$. Since $G$ preserves finite limits and coequalizers of kernel pairs, the image of the previous diagram under $G$ is 
	\begin{center}
		
		\begin{tikzpicture}[baseline=(current  bounding  box.south), scale=2]
		
		\node (a) at (-0.3,0) {$FC$};
		\node (b) at (0.6,0) {$GN$};
		\node (c) at (1.6,0) {$FD$};
		\node (d) at (2.5,0) {$GM$};
		
		\path[font=\scriptsize]
		
		(a) edge [->>] node [above] {$G\gamma$} (b)
		([yshift=1.5pt]b.east) edge [->] node [above] {$G\alpha$} ([yshift=1.5pt]c.west)
		([yshift=-1.5pt]b.east) edge [->] node [below] {$G\beta$} ([yshift=-1.5pt]c.west)
		(c) edge [->>] node [above] {$G\eta$} (d);
		\end{tikzpicture}
	\end{center}
	where $G\alpha,G\beta:GN\to FD$ form the kernel pair of $G\eta$. Since $[\A,\V]$ is regular, $GN$ and $(G\alpha,G\beta)$ are given by the image factorization of $(Fu,Fv):FC\to FD\times FD$ and hence $GN$ is actually in $\B$ and $G\alpha$ and $G\beta$ exist as arrows of $\B$. Moreover, being a kernel pair (in $[\A,\V]$) the pair $(G\alpha,G\beta)$ is an equivalence relation. But $\B$ is exact and therefore all equivalence relations are effective; this means that $G\alpha$ and $G\beta$ have a coequalizer in $\B$ which hence coincides with $GM$. As a consequence $G$ takes values in $\B$ as claimed.\\ 
\end{proof}

This says that the left biadjoint to the forgetful functor $U_{\text{ex/lex}}:\V\text{-}\bo{Ex}\to\V\text{-}\bo{Lex}$ is given by the composite
\begin{center}
	
	\begin{tikzpicture}[baseline=(current  bounding  box.south), scale=2]
	
	\node (a) at (0,0) {$\V\text{-}\bo{Lex}$};
	\node (b) at (1.5,0) {$\V\text{-}\bo{LFP}^{op}$};
	\node (c) at (3,0) {$\V\text{-}\bo{DEF}^{op}$};
	\node (d) at (4.5,0) {$\V\text{-}\bo{Ex}$};

	\path[font=\scriptsize]
	
	(a) edge [->] node [above] {$\text{Lex}(-,\V)$} (b)
	(b) edge [->] node [above] {$U_{\text{lfp/def}}^{op}$} (c)
	(c) edge [->] node [above] {$\text{DEF}(-,\V)$} (d);
	\end{tikzpicture}
\end{center}
where $U_{\text{lfp/def}}:\V\text{-}\bo{LFP}\to \V\text{-}\bo{DEF}$ is the forgetful functor. Since the first and the last are actually biequivalences, it follows that $U_{\text{lfp/def}}$ has a left biadjoint too, which is given by

\begin{center}
	
	\begin{tikzpicture}[baseline=(current  bounding  box.south), scale=2]
	
	\node (a) at (0,0) {$\V\text{-}\bo{DEF}$};
	\node (b) at (1.5,0) {$\V\text{-}\bo{Ex}^{op}$};
	\node (c) at (3,0) {$\V\text{-}\bo{Lex}^{op}$};
	\node (d) at (4.5,0) {$\V\text{-}\bo{LFP}.$};

	\path[font=\scriptsize]
	
	(a) edge [->] node [above] {$\text{DEF}(-,\V)$} (b)
	(b) edge [->] node [above] {$U_{\text{ex/lex}}^{op}$} (c)
	(c) edge [->] node [above] {$\text{Lex}(-,\V)$} (d);
	\end{tikzpicture}
\end{center}

The next proposition gives an explicit description of the free exact $\V$-category on a regular one:

\begin{prop}
	Let $\C$ be a small regular $\V$-category and $\R=\textnormal{Reg}(\C,\V)$. Then for each small exact $\V$-category $\B$, precomposition with $\textnormal{ev}:\C\to\textnormal{DEF}(\R,\V)$ induces an equivalence:
	$$ \textnormal{Reg}(\textnormal{DEF}(\R,\V),\B)\simeq\textnormal{Reg}(\C,\B). $$
	In other words $\textnormal{DEF}(\R,\V)$ is the free exact $\V$-category over $\C$ as a regular $\V$-category.
\end{prop}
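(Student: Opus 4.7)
The strategy is to mimic the proof of Proposition \ref{freeexfc}, with the role of $\textnormal{Lex}(\C,\V)$ played by $\R=\textnormal{Reg}(\C,\V)$, making use of the fact that $\textnormal{ev}_{\C}:\C\to\textnormal{DEF}(\R,\V)$ is not merely finite-limit-preserving but actually regular (by Theorem \ref{Barr} combined with Remark \ref{regdef}); this is what ensures that precomposition with $\textnormal{ev}_{\C}$ really lands in $\textnormal{Reg}(\C,\B)$.

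First I would dispatch the case $\B=\V$. Since $\R$ is definable (Remark \ref{regdef}), Corollary \ref{evaldef} supplies an equivalence $\textnormal{ev}_{\R}:\R\to\textnormal{Reg}(\textnormal{DEF}(\R,\V),\V)$; a direct computation shows that $-\circ\textnormal{ev}_{\C}$ is a quasi-inverse, since for $G\in\R$ and $c\in\C$ one has $(\textnormal{ev}_{\R}(G)\circ\textnormal{ev}_{\C})(c)=\textnormal{ev}_{\C}(c)(G)=G(c)$. Next, for $\B=[\A,\V]$ with $\A$ a small $\V$-category, I would exploit the fact that finite limits, regular epimorphisms and coequalizers of kernel pairs in $[\A,\V]$ are all computed pointwise; this gives $\textnormal{Reg}(-,[\A,\V])\simeq[\A,\textnormal{Reg}(-,\V)]$, and the previous case then yields the desired equivalence componentwise.

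For general small exact $\B$, I would fix via Theorem \ref{functembedd} a fully faithful regular embedding $H:\B\hookrightarrow[\A,\V]$, yielding a commutative square
\[
\begin{tikzcd}
\textnormal{Reg}(\textnormal{DEF}(\R,\V),\B) \ar[r,"-\circ\textnormal{ev}"] \ar[d,"H\circ-"'] & \textnormal{Reg}(\C,\B) \ar[d,"H\circ-"] \\
\textnormal{Reg}(\textnormal{DEF}(\R,\V),[\A,\V]) \ar[r,"\simeq"] & \textnormal{Reg}(\C,[\A,\V])
\end{tikzcd}
\]
whose bottom arrow is the equivalence just established and whose vertical arrows are fully faithful (since $H$ is). Full faithfulness of the top arrow is then automatic, and only essential surjectivity remains.

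This last step is the main obstacle: for $F\in\textnormal{Reg}(\C,\B)$, the extension $G:\textnormal{DEF}(\R,\V)\to[\A,\V]$ of $HF$ along $\textnormal{ev}_{\C}$ exists by the $[\A,\V]$ case, and I must show that $G$ factors through $H(\B)$. I would apply the very same argument as in Proposition \ref{freeexfc}: by Proposition \ref{gencoeq} and Remark \ref{goodcoeq}, each $M\in\textnormal{DEF}(\R,\V)$ is a regular-epi/kernel-pair coequalizer of objects in the image of $\textnormal{ev}_{\C}$; applying the regular functor $G$ realises $GN$ as the image in $[\A,\V]$ of a morphism $(Fu,Fv):FC\to FD\times FD$ between objects of $H(\B)$, so that $GN\in H(\B)$ because $H$ preserves image factorisations. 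Then $GN\rightrightarrows FD$ is an equivalence relation in $\B$, and exactness of $\B$ provides its effective coequalizer in $\B$, which must coincide with $GM$ in $[\A,\V]$ since $H$ is regular. Hence $GM\in H(\B)$, completing the argument.
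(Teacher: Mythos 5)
Your proposal is correct and follows essentially the same route as the paper: the paper's own proof consists precisely of establishing the case $\B=\V$ via the duality (Theorem \ref{main}/Corollary \ref{evaldef}, noting the equivalence is induced by evaluation) and then declaring that one argues "as in the preceding proof" of Proposition \ref{freeexfc}, which is exactly the $[\A,\V]$-reduction, the embedding from Theorem \ref{functembedd}, and the coequalizer argument via Proposition \ref{gencoeq} that you spell out. Your version just makes explicit the details the paper leaves to the reader.
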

\begin{proof}
	Note that $\R$ is a definable subcategory of $\text{Lex}(\C,\V)$, hence by Theorem \ref{main} the equivalence $$ \textnormal{Reg}(\C,\V)=\R\simeq \textnormal{Reg}(\textnormal{DEF}(\R,\V),\V) $$
	holds and is induced by the evaluation map. Arguing as in the preceding proof we obtain the equivalence for any small exact $\B$ in place of $\V$.
\end{proof}

As before, this says that the left biadjoint to the forgetful functor $U_{\text{ex/reg}}:\V\text{-}\bo{Ex}\to\V\text{-}\bo{Reg}$ is given by the composite
\begin{center}
	
	\begin{tikzpicture}[baseline=(current  bounding  box.south), scale=2]
	
	\node (a) at (0,0) {$\V\text{-}\bo{Reg}$};
	\node (c) at (1.5,0) {$\V\text{-}\bo{DEF}^{op}$};
	\node (d) at (3,0) {$\V\text{-}\bo{Ex}.$};

	\path[font=\scriptsize]
	
	(a) edge [->] node [above] {$\text{Reg}(-,\V)$} (c)
	(c) edge [->] node [above] {$\text{DEF}(-,\V)$} (d);
	\end{tikzpicture}
\end{center}
$ $

\section{The Infinitary Case}

As often happens, the results we have proven extend to the infinitary case with no particular effort, simply replacing ``finite'' by ``less than $\alpha$'' everywhere (where $\alpha$ is an infinite regular cardinal). In this section we explain in detail how this generalization works.

Let us fix then an infinite regular cardinal $\alpha$; our base for enrichment will now be an $\alpha$-quasivariety:

\begin{Def}
	Let $\V=(\V_0,\otimes,I)$ be a symmetric monoidal closed category. We say that $\V$ is a {\em symmetric monoidal $\alpha$-quasivariety }if:\begin{enumerate}
		
		\item $\V_0$ is an $\alpha$-quasivariety: there is a strong generator $\P\subseteq(\V_{0})_{p\alpha}$ made of $\alpha$-presentable projective objects;
		\item $I\in(\V_{0})_{\alpha}$;
		\item if $P,Q\in\P$ then $P\otimes Q\in(\V_{0})_{p\alpha}$.
	\end{enumerate}
	We call $\V$ a {\em symmetric monoidal $\alpha$-variety} if $\V_0$ is moreover exact.
\end{Def}

\noindent Here we are denoting by $(\V_{0})_{p\alpha}$ the full subcategory of $\alpha$-presentable projective objects of $\V_0$. In particular, if $\gamma$ is a regular cardinal greater than $\alpha$, each symmetric monoidal $\alpha$-quasivariety is also a symmetric monoidal $\gamma$-quasivariety.

Note that any symmetric monoidal $\alpha$-quasivariety $\V$ is locally $\alpha$-presentable as a closed category; hence the Gabriel-Ulmer duality between $\gamma$-complete $\V$-categories and locally $\gamma$-presentable ones still holds for each regular cardinal $\gamma\geq\alpha$, as explained in \cite[Section~7.4]{Kel82:articolo}. Moreover, $\alpha$-filtered colimits commute with $\alpha$-small weighted limits in each locally $\alpha$-presentable $\V$-category.

\begin{Def}
	A $\V$-category $\C$ is said to be {\em $\alpha$-regular} if it has all $\alpha$-small weighted limits, coequalizers of kernel pairs, and is such that regular epimorphisms are stable under pullback and closed under powers by elements of $\P$ and under $\alpha$-small products. A $\V$-functor $F:\C\to\D$ between $\alpha$-regular $\V$-categories is called {\em $\alpha$-regular} if it preserves $\alpha$-small weighted limits and regular epimorphisms; we denote by $\alpha\textnormal{-Reg}(\C,\D)$ the $\V$-category of regular functors from $\C$ to $\D$.\\
	A $\V$-category $\B$ is called $\alpha$-exact if it is $\alpha$-regular and in addition the ordinary category $\B_0$ is exact in the usual sense. Denote by $(\V,\alpha)\text{-}\bo{Ex}$ the 2-category of all small $\alpha$-exact $\V$-categories, $\alpha$-regular $\V$-functors, and $\V$-natural transformations.
\end{Def}

We need regular epimorphisms to be stable under $\alpha$-small products to recover an infinitary version of Proposition \ref{coreg}. In fact, we want each pushout diagram with one specified arrow a regular monomorphism to be an $\alpha$-filtered colimit of representable diagrams of the same kind (in the $\V$-category of $\alpha$-continuous $\V$-functors); this is done like in the finitary case, but to make the colimit $\alpha$-filtered we require the additional condition on $\alpha$-small products. The remaining arguments used in Section \ref{BarrEmb} generalize easily to this context leading to an infinitary version of Barr's Embedding Theorem (the comments made before Theorem \ref{Barr} still apply):

\begin{teo}
	Let $\V$ be a symmetric monoidal $\alpha$-quasivariety.
	For any small $\alpha$-regular $\V$-category $\C$ the evaluation functor $$\textnormal{ev}_{\C}:\C\longrightarrow[\alpha\textnormal{-Reg}(\C,\V),\V]$$ is fully faithful and $\alpha$-regular.
\end{teo}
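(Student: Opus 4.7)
The plan is to run through the same three-step argument that proved Theorem \ref{Barr} in the finitary case, systematically replacing ``finite'' by ``$\alpha$-small'' and ``filtered'' by ``$\alpha$-filtered''. Set $\L := \alpha\text{-Lex}(\C,\V)$ (which is locally $\alpha$-presentable since $\C$ is $\alpha$-complete and $\V$ is a symmetric monoidal $\alpha$-quasivariety) and let $J : \D := \alpha\text{-Reg}(\C,\V) \hookrightarrow \L$ be the inclusion. The goal is to show $\D$ is weakly reflective in $\L$ with weak reflections that can be chosen to be regular monomorphisms, then invoke a straightforward $\alpha$-analogue of Proposition \ref{codense} to obtain codensity of $\D \hookrightarrow \L$, and finally factor $\textnormal{ev}_{\C}$ as the composite $\widehat{J} \circ Y$ of the restricted Yoneda $Y : \C \to \L^{op}$ with $\widehat{J} = \L(1,J)$, both of which are fully faithful. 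The $\alpha$-regular part of the conclusion will follow because $\D$ is closed in $[\C,\V]$ under $\alpha$-small weighted limits, coequalizers of kernel pairs, and powers by $\P$, and the relevant limits and colimits in $[\D,\V]$ are computed pointwise in the regular $\V$-category $\V$.

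The first step is the $\alpha$-analogue of Lemma \ref{coreg}: $\L$ is $\alpha$-coregular, i.e.\ regular monomorphisms in $\L$ are stable under pushouts and copowers by objects of $\P$. Following the Chirvasitu-style argument recalled in the paper, this reduces to expressing any pushout square in $\L$ with a specified regular monomorphism as an $\alpha$-filtered colimit of pushout squares between representables in which the specified leg arises from a regular epimorphism of $\C$. The newly required axiom--regular epimorphisms in $\C$ closed under $\alpha$-small products--is exactly what makes the relevant indexing category $\alpha$-filtered, since in the dual description one must be able to amalgamate any $\alpha$-small family of representing data using products of regular epis. Once $\L$ is known to be $\alpha$-coregular, the second step adapts Lemma \ref{regfic}: $\D$ is the $\alpha$-injectivity class defined by $\M = \{\C(h,-) \mid h \text{ a regular epimorphism in } \C\}$, and an $\alpha$-ary version of the small object argument from Theorem \ref{wr}, using $\alpha$-transfinite composites of pushouts of copowers of maps in $\M$ by objects of $\P$, yields weak reflections into $\D$. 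Because $\L$ is locally $\alpha$-presentable and $\alpha$-coregular, $\alpha$-filtered colimits commute with $\alpha$-small limits and regular monomorphisms are stable under pushouts and $\alpha$-filtered colimits, so the iterated construction produces a weak reflection that is a regular monomorphism.

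The third step is essentially free: the proof of Proposition \ref{codense} is elementary and only uses coequalizers of kernel pairs in $\L$ together with regular-mono weak reflections, so it applies verbatim to conclude that $\D$ is codense in $\L$; composing with the restricted Yoneda $Y$ gives the fully faithfulness of $\textnormal{ev}_{\C}$, and $\alpha$-regularity is checked pointwise from the analogous property of $\V$. The main obstacle is Step 1, the verification of $\alpha$-coregularity: one has to check carefully that the indexing category in the colimit presentation of a pushout is genuinely $\alpha$-filtered, and this is precisely where the additional axiom on $\alpha$-small products of regular epimorphisms in $\C$--absent from the finitary formulation beyond the automatic case of finite products--must be brought to bear. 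Everything else in the argument is a direct transcription in which no new ideas beyond cardinality bookkeeping are needed.
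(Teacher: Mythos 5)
Your proposal follows essentially the same route as the paper: the authors explicitly reduce the infinitary theorem to the finitary argument of Section \ref{BarrEmb}, noting (exactly as you do) that the only genuinely new point is that closure of regular epimorphisms under $\alpha$-small products is what makes the colimit presentation of pushout diagrams $\alpha$-filtered in the infinitary analogue of Lemma \ref{coreg}, after which the weak-reflection and codensity steps carry over unchanged. Your identification of Step 1 as the sole locus where the new axiom is needed matches the paper's own remark, so the proposal is correct and in agreement with the intended proof.
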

 
Similarly, from Section \ref{MakIm} we infer an infinitary version of Makkai's Image Theorem:

\begin{teo}
	Let $\V$ be a symmetric monoidal $\alpha$-quasivariety. For any small $\alpha$-exact $\V$-category $\B$ the evaluation map $$\textnormal{ev}_{\B}:\B\longrightarrow\alpha\textnormal{-Def}(\alpha\textnormal{-Reg}(\B,\V),\V)$$ is an equivalence, where $\alpha\textnormal{-Def}(\alpha\textnormal{-Reg}(\B,\V),\V)$ is the full subcategory of $[\alpha\textnormal{-Reg}(\B,\V),\V]$ given by those functors preserving products, projective powers, and $\alpha$-filtered colimits.
\end{teo}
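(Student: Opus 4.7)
The plan is to mimic the proof of Theorem \ref{Makkai} verbatim, replacing \textit{finite} by \textit{$\alpha$-small} and \textit{finitely presentable} by \textit{$\alpha$-presentable} throughout, and using the two hypotheses on $\V$ at exactly the points where the finitary argument used them: condition (2), $I\in (\V_0)_\alpha$, at the factorization step in Lemma~\ref{regepi}, and the stability of regular epimorphisms under $\alpha$-small products at the coregularity step of Lemma~\ref{coreg}. The infinitary Barr Embedding stated just above already provides fully faithfulness of $\textnormal{ev}_\B$, so it suffices to show essential surjectivity, which in turn reduces (by the Makkai--Reyes lemma quoted before Theorem~\ref{Makkai}, applied to the underlying ordinary regular categories) to proving that every $F\in\alpha\textnormal{-Def}(\alpha\textnormal{-Reg}(\B,\V),\V)$ admits a pointwise regular epimorphism from some $\textnormal{ev}_\B(C)$.

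Concretely, let $\L=\alpha\textnormal{-Lex}(\C,\V)$ and $\D=\alpha\textnormal{-Reg}(\C,\V)$ for a small $\alpha$-regular $\C$. First, I would establish the $\alpha$-coregularity of $\L$: each pushout square with a specified regular monomorphism, and each $\P$-copower of a regular monomorphism, can be written as an $\alpha$-filtered colimit of such diagrams between representables, as in \cite[Theorem~3]{Chi11:articolo}; the new input needed to make that colimit $\alpha$-filtered (rather than merely filtered) is precisely the closure of regular epimorphisms in $\V$ under $\alpha$-small products, built into the definition of $\alpha$-regular. Then the same argument as in Lemma~\ref{regfic} shows that $\D$ is weakly reflective in $\L$ with weak reflections that are regular monomorphisms, and Proposition~\ref{codense} gives codensity, which together with the restricted Yoneda embedding yields a codensity presentation of each $L\in\L$ as an equalizer of a parallel pair between objects of $\D$.

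Next I would prove the two technical lemmas underlying Proposition~\ref{gencoeq}. For the Kan extension lemma, the construction of $\textnormal{Ran}_J F$ as the equalizer of two $\alpha$-ary functors $FR\rightrightarrows FRS$ carries over unchanged: the small object argument (Theorem~\ref{wr}) produces an $\alpha$-ary endofunctor $R$ of $\L$ with $r_L\colon L\to RL$ a regular-monomorphism weak reflection, and $S$, the cokernel pair of $r$, is a pushout of $\alpha$-ary functors hence $\alpha$-ary; so $\textnormal{Ran}_J F$ is an $\alpha$-small limit of $\alpha$-ary functors valued in the locally $\alpha$-presentable $\V$, hence $\alpha$-ary. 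For the analogue of Lemma~\ref{regepi}, given $F\in\alpha\textnormal{-Def}(\D,\V)$, $L\in\D$ and $x\colon I\to FL$, I write $L$ as an $\alpha$-filtered colimit of $\alpha$-presentable objects of $\L$; since $I\in(\V_0)_\alpha$ (condition~(2)), $x$ factors through some $G(A_j)\cong[\L,\V](\L(A_j,-),G)$ where $G=\textnormal{Ran}_J F$, providing the required $\eta\colon\L(A_j, J-)\to F$. Running the diagonal product construction of Lemma~\ref{regepi} over the $\alpha$-accessible category $\D$, using that $F$ preserves products and projective powers and that $\D_\lambda$ generates $\D$ under $\lambda$-filtered colimits for some $\lambda\geq\alpha$, produces the desired pointwise regular epimorphism $\L(B, J-)\twoheadrightarrow F$ with $B\in\L_\alpha$.

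Given these two ingredients, the proof of Proposition~\ref{gencoeq} goes through unchanged, exhibiting $F$ as a coequalizer $\textnormal{ev}_\C(A)\rightrightarrows\textnormal{ev}_\C(B)\twoheadrightarrow F$ of objects in the image of $\textnormal{ev}_\C$; specialising to $\C=\B$ exact and invoking the Makkai--Reyes lemma to the full, conservative, $\alpha$-regular functor $\textnormal{ev}_\B$ between the underlying ordinary regular categories (with $\B_0$ exact) yields the equivalence. I expect the main obstacle to be the coregularity argument in Step~1: verifying that the presentation of regular monomorphisms of $\L$ as colimits of representable ones is genuinely $\alpha$-filtered, which is the only place where the new axiom on stability of regular epimorphisms under $\alpha$-small products — absent from the finitary definition — is essential; every other step is a routine transcription once this is in hand.
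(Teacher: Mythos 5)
Your proposal is correct and follows essentially the same route as the paper, which itself treats the infinitary case as a direct transcription of Sections \ref{BarrEmb}--\ref{MakIm}; in particular you have correctly isolated the two genuinely new inputs — the closure of regular epimorphisms under $\alpha$-small products to make the colimit presentation of regular monomorphisms in $\alpha\textnormal{-Lex}(\C,\V)$ genuinely $\alpha$-filtered (the coregularity step), and $I\in(\V_0)_\alpha$ for the factorization of $x:I\to FL$ through a colimit component — which are exactly the points the paper singles out.
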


Finally the corresponding notions of $\alpha$-injectivity class and $\alpha$-definable $\V$-category are given as follows:

\begin{Def}
	Let $\M$-inj be an injectivity class of a locally $\alpha$-presentable $\V$-category; if the arrows in $\M$ have $\alpha$-presentable domain and codomain, we call $\M$-inj an {\em enriched $\alpha$-injectivity class}. A $\V$-category $\D$ is then called {\em $\alpha$-definable} if it is an $\alpha$-injectivity class in some locally $\alpha$ presentable $\V$-category. A morphism between definable $\V$-categories is a $\V$-functor that preserves products, projective powers and $\alpha$-filtered colimits. Denote by $(\V,\alpha)\text{-}\bo{DEF}$ the 2-category of $\alpha$-definable $\V$-categories, morphisms between them, and $\V$-natural transformations.
\end{Def}

Then the results of Sections \ref{def} and \ref{equiv} have a suitable extension to this context. The only thing we need to point out is the corresponding infinitary notion of regular $\V$-congruence, needed to obtain the analogue of Proposition \ref{frac}. An {\em $\alpha$-regular $\V$-congruence} on an $\alpha$-regular $\V$-category $\C$ is an ordinary regular congruence $\Sigma$ which is closed under $\alpha$-presentable powers and $\alpha$-small products. The last assumption ensures that colimits indexed on $\Sigma^{op}$ are $\alpha$-filtered; this way both the ordinary construction (from \cite{Ben89:articolo}) and Proposition \ref{frac} extend with no particular changes to an infinitary version. In the end we obtain:

\begin{teo}
	Let $\V$ be a symmetric monoidal $\alpha$-variety. Then the 2-adjunction
	\begin{center}
		
		\begin{tikzpicture}[baseline=(current  bounding  box.south), scale=2]

		\node (f) at (0,0.4) {$(\V,\alpha)\text{-}\bo{DEF}$};
		\node (g) at (2.4,0.4) {$(\V,\alpha)\text{-}\bo{Ex}^{op}$};
		\node (h) at (1.2,0.4) {$\perp$};
		
		\path[font=\scriptsize]

		([yshift=2.3pt]f.east) edge [->] node [above] {$\alpha\textnormal{-Def}(-,\V)$} ([yshift=2.3pt]g.west)
		([yshift=-2.3pt]f.east) edge [<-] node [below] {$\alpha\textnormal{-Reg}(-,\V)$} ([yshift=-2.3pt]g.west);
		\end{tikzpicture}
		
	\end{center}
	is a biequivalence of 2-categories.
\end{teo}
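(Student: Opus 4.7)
The plan is to adapt the proof of the finitary Theorem \ref{main}, replacing ``finite'' by ``$\alpha$-small'' throughout. The first step is to verify that the 2-adjunction between $(\V,\alpha)\text{-}\bo{DEF}$ and $(\V,\alpha)\text{-}\bo{Ex}^{op}$ makes sense. For each small $\alpha$-regular $\V$-category $\C$ and each $\alpha$-definable $\V$-category $\D$, the hom-categories on both sides identify with $\V$-functors $\D \otimes \C \to \V$ that are $\alpha$-definable in the first variable and $\alpha$-regular in the second, so the two are naturally equivalent. One also checks that $\alpha\textnormal{-Def}(\D,\V)$ is a small $\V$-category (via the infinitary analogue of Proposition \ref{gencoeq}) and, since $\V_0$ is exact when $\V$ is an $\alpha$-variety, that $\alpha\textnormal{-Def}(\D,\V)$ is actually $\alpha$-exact, so the restricted 2-adjunction into $(\V,\alpha)\text{-}\bo{Ex}^{op}$ is well-defined.

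Next I would invoke the infinitary Barr and Makkai theorems already stated in this section. The counit $\textnormal{ev}_\C : \C \to \alpha\textnormal{-Def}(\alpha\textnormal{-Reg}(\C,\V),\V)$ is fully faithful by the infinitary Barr theorem, and is an equivalence whenever $\C$ is $\alpha$-exact by the infinitary Makkai theorem. This immediately shows that $\alpha\textnormal{-Reg}(-,\V):(\V,\alpha)\text{-}\bo{Ex}^{op}\to(\V,\alpha)\text{-}\bo{DEF}$ is bi-fully faithful, exactly as in the finitary argument preceding Theorem \ref{main}.

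For essential surjectivity, I would establish the infinitary analogue of Proposition \ref{defreg}: every $\alpha$-definable $\V$-category is equivalent to $\alpha\textnormal{-Reg}(\C,\V)$ for some small $\alpha$-regular $\V$-category $\C$. This reduces to constructing the $\V$-category of fractions $\C'[\Sigma^{-1}]$ with respect to an $\alpha$-regular $\V$-congruence $\Sigma$, which the excerpt indicates carries over to this setting: the condition that $\Sigma$ be closed under $\alpha$-small products is precisely what makes the indexing category $\Sigma^{op}$ $\alpha$-filtered, so that the colimit formula $\C[\Sigma^{-1}](X,Y)\cong\text{colim}_{X'\to X\in\Sigma}\C(X',Y)$ interacts correctly with $\alpha$-presentable powers in $\V$. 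Given $\D\simeq\alpha\textnormal{-Reg}(\C,\V)$, passing to the $\alpha$-exact completion $\B=\C_{\textnormal{ex/reg}}$ (whose existence follows from the infinitary analogue of Remark \ref{reexreg}) produces an $\alpha$-exact $\B$ with $\D\simeq\alpha\textnormal{-Reg}(\B,\V)$. Together with bi-fully-faithfulness this makes $\alpha\textnormal{-Reg}(-,\V)$ a biequivalence, with pseudo-inverse $\alpha\textnormal{-Def}(-,\V)$.

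The main obstacle lies in the preliminary infinitary machinery rather than in the formal argument above: one must check that the Johnstone--Kelly characterization of closed finite actions (Proposition \ref{finpow}) extends to closed $\alpha$-actions using $\V_{0\alpha}$ in place of $\V_{0f}$, and that the explicit construction of $\C'[\Sigma^{-1}]$ goes through verbatim once $\Sigma$ is closed under $\alpha$-presentable powers and $\alpha$-small products. Similarly, the existence of $\alpha$-exact and $\alpha$-regular completions must be noted (these follow from the $\alpha$-analogues of the $\Phi$-exact completions of \cite{GL12:articolo}). Once these structural ingredients are in place, the proof of the biequivalence is essentially the same symbol-for-symbol translation of Section \ref{equiv}.
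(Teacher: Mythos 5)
Your proposal matches the paper's own treatment: the paper likewise reduces the theorem to the infinitary Barr and Makkai theorems together with the $\alpha$-analogues of Proposition \ref{defreg} and Proposition \ref{frac}, and it singles out exactly the same key point you do, namely that closure of the congruence $\Sigma$ under $\alpha$-small products is what makes the colimits indexed on $\Sigma^{op}$ $\alpha$-filtered so that the fraction construction goes through. The argument is correct and essentially identical to the one in the paper.
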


\end{document}